\theoremstyle{thmstyleone}%
\newtheorem{theorem}{Theorem}
\theoremstyle{thmstyletwo}%
\newtheorem{remark}{Remark}%
\newtheorem{lemma}{Lemma}%
\theoremstyle{thmstylethree}%
\DeclareMathOperator{\argmin}{argmin}
\begin{document}

\title[Asymptotic theory for extreme value generalized additive models]{Asymptotic theory for extreme value generalized additive model}


\author*[1]{\fnm{Takuma} \sur{Yoshida}}\email{yoshida@sci.kagoshima-u.ac.jp}

\affil*[1]{\orgdiv{Graduate School of Science and Engineering}, \orgname{Kagoshima University}, \orgaddress{\street{1-21-35, Korimoto}, \city{Kagoshima}, \postcode{890-0065}, \state{Kagoshima}, \country{Japan}}}


\abstract{The classical approach to analyzing extreme value data is the generalized Pareto distribution (GPD). 
When the GPD is used to explain a target variable with the large dimension of covariates, the shape and scale function of covariates included in GPD are sometimes modeled using generalized additive models (GAM).
In contrast to many results of applications, no theoretical results have been reported for the hybrid technique of GAM and GPD, which motivates us to develop its asymptotic theory. 
We provide the rate of convergence of the estimator of shape and scale functions, as well as its local asymptotic normality.}

\keywords{ Extreme value theory; Generalized additive model; Generalized Pareto distribution; Peak over threshold; Penalized spline}



\maketitle

\section{Introduction}\label{sec1}

Generalized Pareto distribution (GPD) is a typical model to express the tail probability of data. 
The extreme value theory (EVT) explains that the GPD can fit the data exceeding some high threshold value.
The GPD contains two parameters: one characterizes the distribution shape whereas the other can be regarded as characterizing the scale. 
Smith (1987) and Drees et al. (2004) established the asymptotic theory of the maximum likelihood estimators of shape and scale parameters. 

To predict the tail probability of a target variable associated with covariate information, we often consider the GPD with shape and scale functions of the covariates, designated as GPD regression. 
Davison and Smith (1990) have proposed the linear models.
Hall and Tajvidi (2000), Ramesh and Davison (2002), and Beirlant et al. (2004) investigated GPD regression in the context of nonparametric smoothing with kernel methods. 
However, the ordinary nonparametric estimator would be drastically unstable when the number of covariates is large: the so-called curse of dimensionality. 
In regression with numerous covariates, some specific modeling would be better instead of fully nonparametric methods.
One efficient approach is the generalized additive model (GAM). 
Hastie and Tibshirani (1986) proposed the GAM, which has been developed by many authors in several regression models, and which has been summarized by Hastie and Tibshirani (1990) and Wood (2017). 
Chavez-Demoulin and Davison (2005) provided the GPD regression with shape and logarithm of scale functions assumed to be GAM. 
Yee and Stephenson (2007), Chavez-Demoulin (2015), Vatter and Chavez-Demoulin (2015), Mhalla et al. (2019), and Youngman (2019) contributed the additive modeling for extreme value analysis. 
Because the GPD regression has two target functions for shape and scale, its additive modeling is sometimes denoted by vector-generalized additive models, as descirbed by Yee (2015). 
The important work on GAM in GPD regression is Youngman (2022), who published the R-package {\sf evgam}. Consequently, everyone can easily use GAM in GPD for extreme value data analysis.  
The implementation of {\sf evgam} is related closely to the famous R-package {\sf mgcv} (Wood (2011, 2017)). 
Nevertheless, theoretical investigations of GPD regression with GAM are lacking in the relevant literature. 
This lack of investigation to date motivates us to establish its asymptotic theory. 
The key method implemented in {\sf evgam} is the penalized spline method. 
In mean regression, the asymptotic results of spline-based GAM have been developed by Wang and Yang (2007), Yoshida and Naito (2014), Liu et al. (2011), and Liu et al. (2013). 
Therefore, the asymptotic study of GAM in GPD regression is also an important issue in terms of developing GAM versatility. 
As represented herein, we show the asymptotic $L_2$-rate and $L_\infty$-rate of convergences of the GAM estimator of shape and scale functions. 
The local asymptotic normality of the estimators is also obtained. 
 
The rest of this paper is organized as follows. 
Basic conditions of EVT are presented in section 2.
Section 3 provides the estimator of shape and scale function under GAM in GPD regression. 
Main results are presented in section 4. 
We first introduce the mathematical conditions to obtain the asymptotic theory for the estimator in section 4.1. 
Section 4.2 presents the $L_2$ and $L_\infty$-rate of convergence of the estimator as well as local asymptotic normality. 
Section 5 concludes the paper. 
All proofs of theorems and related lemmas are described in the Appendix.
This study specifically examines mathematical results of the estimator under GAM in GPD regression. 
Its numerical performance can be confirmed easily via {\sf evgam}. For that reason, we omit the numerical study.

\section{Preliminaries}

\subsection{Extreme value theory}

We first review the EVT for univariate random variable.
Let $Y\in\mathbb{R}$ be the random variable with distribution function $F$, denoted by $F(y)=P(Y<y)$ for $y\in\mathbb{R}$. 
In the EVT, if there exist sequences $a_n$ and $b_n$ and $\gamma\in\mathbb{R}$ such that $F^n(a_n y+b_n)\rightarrow G(y|\gamma)=\exp[-(1+\gamma y)^{-1/\gamma}]$ for $1+\gamma y>0$, it is said that $F$ belongs to the maximum domain of attraction of distribution $G(y\mid \gamma)$, as denoted by $F\in{\cal D}(G(\cdot|\gamma))$. 
Note that if $\gamma=0$, $G(y\mid 0)=\exp[-\exp[-y]]$.

For some threshold value $w\in\mathbb{R}$, we define 
$$
F_w(y)=\frac{F(w+y)-F(w)}{1-F(w)}
$$
and the GPD function as 
$$
H(y|\gamma)= 1-(1+\gamma y)^{-1/\gamma}
$$
with $1+\gamma y>0$ and parameter $\gamma\in\mathbb{R}$. 
If $\gamma=0$, then we set $H(y|0)=\lim_{\gamma\rightarrow 0}H(y|\gamma)=1-e^{-y}$. 
Then, it is well known that $F\in{\cal D}(G(\cdot|\gamma))$ if and only if there exists a sequence $\sigma_w\in\mathbb{R}_+$ such that for any $y\in(0,y^*-w)$, 
\begin{eqnarray}
\lim_{w\rightarrow y^*} \left|F_w(y)-H\left(\left.\frac{y}{\sigma_w}\right|\gamma\right)\right|\rightarrow 0, \label{Dom1}
\end{eqnarray}
where $y^*=\sup\{t: F(t)<1\}$ (e.g. Theorem 1.2.5 of de Haan and Ferreira (2006)). 
From the above, it is readily apparent that if $\gamma>0$, $y^*=\infty$ and $y^*$ is finite for $\gamma<0$. 
For $\gamma=0$, $y^*$ can be obtained as both finite or infinite. 
For this paper, we only consider the case that $y^*=\infty$ if $\gamma=0$. 
In addition, $\sigma_w$ can be taken as
\begin{eqnarray}
\left\{
\begin{array}{ll}
\underset{w\rightarrow\infty}{\lim} \displaystyle\frac{\sigma_w}{w}=\gamma,& \gamma>0,\\
\underset{w\rightarrow y^*}{\lim} \displaystyle\frac{\sigma_w}{y^*-w}=-\gamma,& \gamma<0,\\
\underset{w\rightarrow\infty}{\lim}  \sigma_w=\sigma,& \gamma=0,\\
\end{array}
\right. \label{ScaleGamma}
\end{eqnarray}
where $\sigma>0$ is some constant (Theorem 1.2.5 in de Haan and Ferreira 2006). 

To predict the probability using the GPD model, the parameters $(\gamma,\sigma_w)$ are needed to be estimated. 
Smith (1987) and Drees et al. (2004) investigated the maximum-likelihood estimator of $(\gamma,\sigma_w)$ and its asymptotic result. 
To establish the asymptotic theory of the estimator of $(\gamma,\sigma_w)$, (\ref{Dom1}) should be modified in the context of the second-order condition of EVT. 

Let $F^{-1}(x)=\inf\{y: F(y)\geq x\}$ and let $U(t)= F^{-1}(1-1/t)$. 
Then, $U(t)\rightarrow y^*$ as $t\rightarrow\infty$. 
According to Theorem 1.1.6 presented by de Haan and Ferreira (2006), $F\in{\cal D}(G(\cdot|\gamma))$ if and only if there exists a function $a:\mathbb{R}_+\rightarrow\mathbb{R}_+$ such that $\{U(tx)-U(t)\}/a(t)\rightarrow (x^\gamma-1)/\gamma$ as $t\rightarrow \infty$ and
$a(t)=\sigma_w$ with $w= U(t)$.
Assuming that function $A:\mathbb{R}_+\rightarrow \mathbb{R}_+$ and $\tilde{Q}:\mathbb{R}_+\rightarrow \mathbb{R}_+$ exist such that
\begin{eqnarray}
\lim_{t\rightarrow\infty}\left|\frac{\frac{U(tx)-U(t)}{a(t)}-\frac{x^\gamma-1}{\gamma}}{A(t)}-\tilde{Q}(x|\gamma,\rho)\right|=0, \label{Dom2}
\end{eqnarray}
where $\rho\leq 0$ is the so-called second-order parameter, 
$$
\tilde{Q}(x|\gamma,\rho)=\frac{1}{\rho}\left(\frac{x^{\gamma+\rho-1}-1}{\gamma+\rho}-\frac{x^\gamma-1}{\gamma}\right),
$$
and $A(t)\rightarrow 0$ as $t\rightarrow\infty$. 
Then, Theorem 2.3.3 of de Haan and Ferreira (2006) demonstrates that $A$ is $\rho$-regularly varying function at $t\rightarrow \infty$, i.e., for all $x\in\mathbb{R}_+$, $A(xt)/A(t)\rightarrow x^{\rho}$ as $t\rightarrow\infty$. 
If $\gamma=0$ or $\rho=0$, then we obtain $\tilde{Q}(x|\gamma,\rho)$ as $\gamma\rightarrow 0$ or $\rho\rightarrow 0$.
By theorem 2.3.8 of de Haan and Ferreira (2006), under (\ref{Dom2}), we obtain 
\begin{eqnarray}
\lim_{w\rightarrow y^*}\left|\frac{F_w(y)-H(y/\sigma_w|\gamma)}{\alpha(w)}-Q(y/\sigma_w|\gamma,\rho) \right|=0 \label{Dom3}
\end{eqnarray}
where $Q(y|\gamma,\rho)=\bar{H}(y|\gamma)^{1+\gamma}\tilde{Q}(\bar{H}^{-1}(y|\gamma)|\gamma,\rho)$, $\bar{H}(y|\gamma)=1-H(y|\gamma)$ and $\alpha(w)=A(1/(1-F(w)))$.
When (\ref{Dom3}) is satisfied, then $F$ is said to belong to the domain of attraction of $G(\cdot|\gamma)$ with second-order parameter $\rho$. 
The asymptotic behavior of the estimator of $(\gamma,\sigma_w)$ is dependent not only $(\gamma,\sigma_w,w)$ but also on $\rho$ and $\alpha(\cdot)$. 
Therefore, the second-order condition of EVT is an important assumption to examine the asymptotic theory for the estimator of $(\gamma,\sigma_w)$.

\begin{remark}
\textup{In (\ref{ScaleGamma}), the original property of scale parameter for $\gamma=0$ is $d \sigma_w/d w\rightarrow 0$ (Theorem 1.2.5 of de Haan and Ferreira 2006). 
That is, for $\gamma=0$, we also allow $\sigma_w=\sigma\log w$ with some constant $\sigma>0$ and $\sigma_w\rightarrow 0$. 
However, if we consider the general condition that $d \sigma_w/dw\rightarrow 0$, the discussion for $\gamma=0$ becomes more complicated (see, Zhou 2009).
Therefore, for the study described herein, we emphasize only the simple case in which $\sigma_w$ converges to constant: $H(y/\sigma_w|0)\approx 1-e^{-y/\sigma}$.}
\end{remark}

\subsection{Extreme value theory in regression}

We next extend the univariate EVT discussed in the preceding section to conditional EVT. 
Let $(Y^*, X, Z)$ be triplet random variables with response $Y^*\in\mathbb{R}$ and covariates $X=(X^{(1)},\ldots, X^{(p)})^\top \in{\cal X}\subset \mathbb{R}^p$ and $Z=(Z^{(1)},\ldots, Z^{(d)})^\top \in{\cal Z}\subset{R}^{d}$. 
Here, ${\cal X}$ and ${\cal Z}$ are assumed to be a compact set. 
Let $F(y|x,z)=P(Y^*<y|X=x,Z=z)$ be a conditional distribution function of $Y^*$ given $(X, Z)=(x, z)=(x^{(1)},\ldots,x^{(p)},z^{(1)},\ldots,z^{(d)})^\top \in{\cal X}\times{\cal Z}$. 
The covariate dependent threshold function is denoted by $\tau(w|x,z)$, where $w$ is some sequence which controls the level of threshold function. 
We choose $\tau$ so that for any $(x,z)\in{\cal X}\times{\cal Z}$, $\tau(w|x,z)\rightarrow y^*(x,z)$ as $w\rightarrow \infty$, where $y^*(x,z)=\sup\{t:F(t|x,z)<1\}$. 
Also, we let $F_{w,\tau}(y|x,z)= \{F(\tau(w|x,z)+y|x,z)-F(\tau(w|x,z)|x,z)\}/\{1-F(\tau(w|x,z)|x,z)\}$. 
For simplicity, we write $F_{w,\tau}(y|x,z)=F_{w}(y|x,z)$. 
Apparently, $F_{w}(y|x,z)= P(Y^*<\tau(w|x,z)+y| Y^*>\tau(w|x,z),X=x, Z=z).$
The typical choices of $\tau$ are constant $\tau(w|x,z)=w$ and conditional quantile function $q(a|x,z)$ with quantile level $a\in(0,1)$.

As an extension of (\ref{Dom1}) to regression version, we assume that, for any $(x,z)\in{\cal X}\times{\cal Z}$, $F(\cdot|x,z)\in{\cal D}(G(\cdot|\gamma_0(x,z)))$ with $\gamma_0:{\cal X}\times{\cal Z}\rightarrow\mathbb{R}$. Moreover, there exists some function $\sigma_{w,\tau}^\dagger:{\cal X}\times{\cal Z}\rightarrow\mathbb{R}_+$ such that 
$$
\lim_{w\rightarrow \infty}\sup_{(x,z)\in{\cal X}\times{\cal Z}}\left|F_w(y|x,z)-H\left(\frac{y}{\sigma_{w,\tau}^\dagger(x,z)}|\gamma_0(x,z)\right)\right|=0.
$$
For simplicity, $\sigma_{w,\tau}^\dagger$ is denoted by $\sigma_w^\dagger$ below. 
Similarly to the preceding section, it is apparent that $y^*(x,z)=\infty$ if $\gamma_0(x,z)>0$, whereas $y^*(x,z)$ is finite when $\gamma_0(x,z)<0$. 
For $\gamma_0(x,z)=0$, we assume that $y^*(x,z)=\infty$. 
Furthermore, we can obtain that, for any $(x,z)\in{\cal X}\times{\cal Z}$,
\begin{eqnarray}
\left\{
\begin{array}{ll}
\displaystyle\lim_{w\rightarrow\infty} \frac{\sigma_{w}^\dagger(x,z)}{\tau(w|x,z)}=\gamma_0(x,z),& \gamma_0(x,z)>0,\\
\displaystyle\lim_{w\rightarrow \infty} \frac{\sigma_{w}^\dagger(x,z)}{y^*(x,z)-\tau(w|x,z)}=-\gamma_0(x,z),& \gamma_0(x,z)<0,\\
\displaystyle \lim_{w\rightarrow\infty} \sigma_{w}^\dagger(x,z)=\sigma^\dagger(x,z),& \gamma_0(x,z)=0\\
\end{array}
\right. \label{SigSeq}
\end{eqnarray}
The above is found for some function $\sigma^\dagger:{\cal X}\times{\cal Z}\rightarrow\mathbb{R}_+$ independent from $w$.
As the second-order condition of conditional EVT, we assume that 
\begin{eqnarray}
&&\lim_{w\rightarrow \infty}\sup_{(x,z)\in{\cal X}\times{\cal Z}}\left|\frac{F_w(y|x,z)-H\left(\frac{y}{\sigma_{w}^\dagger(x,z)}|\gamma_0(x,z)\right)}{\alpha(\tau(w|x,z)|x,z)}-Q\left(\frac{y}{\sigma_{w}^\dagger(x,z)}|\gamma_0(x,z),\rho(x,z)\right) \right|\nonumber\\
&&=0 \label{DomCon1}
\end{eqnarray}
for some function $\rho(x,z)\leq 0$ and $\alpha(\tau(w|x,z)|x,z)$ satisfying $\alpha(\tau(w|x,z)|x,z)\rightarrow 0$ as $w\rightarrow \infty$.

\section{Extreme Value Generalized Additive Models}

This section provides an estimation method of shape and scale functions. 

\subsection{Peak over threshold}
Let $\{(Y^*_i,X_i,Z_i):i=1,\ldots,N\}$ be an $i.i.d.$ random sample from the same distribution as $(Y^*,X,Z)$, where $Y_i^*\in\mathbb{R}$, $X_i=(X_i^{(1)},\ldots,X_i^{(p)})^\top \in{\cal X}$ and $Z_i=(Z_i^{(1)},\ldots,Z_i^{(d)})^\top \in{\cal Z}$. 
For a given threshold function $\tau(w|x,z)$, we let $Y_i=\max\{Y_i^*-\tau(w|X_i,Z_i), 0\}$.
The method of estimating unknown objects using the data exceeding threshold is the so-called peak over threshold.
The conditional distribution of $Y_i$ given $Y_i>0$ and $(X_i,Z_i)=(x,z)$ is $F_w(y|x,z)=P(Y_i<y|Y_i>0,x,z)$. 
Because $F_w(y|x,z)\approx H(y/\sigma_{w}^\dagger(x,z)|\gamma_0(x,z))$, we estimate $(\gamma_0,\sigma^\dagger_{w})$ based on the GPD $H(y/\sigma_{w}^\dagger(x,z)|\gamma_0(x,z))$. 

Here, we provide the likelihood-based estimation method.
The density function from $H$ with arbitrary shape and scale function $(\gamma,\sigma)$ is obtained as
\begin{eqnarray*}
h(y|\gamma(x,z),\sigma(x,z)):= \frac{d}{d y}H\left(\frac{y}{\sigma(x,z)}|\gamma(x,z)\right)&=& \frac{1}{\sigma(x,z)}\left(1+\gamma(x,z)\frac{y}{\sigma(x, z)}\right)^{-1/\gamma(x, z)-1}.
\end{eqnarray*}
When $\gamma(x,z)=0$, $h(y|0,\sigma(x,z))=\lim_{t \rightarrow 0} h(y|t,\sigma(x,z))=\{1/\sigma(x,z)\}\exp[-y/\sigma(x,z)]$. 
The log-likelihood for $(\gamma,\sigma)$ is 
\begin{eqnarray*}
\sum_{i=1}^N \log h(Y_i|\gamma(X_i,Z_i),\sigma(X_i,Z_i))I(Y_i>0).
\end{eqnarray*}
Beirlant and Goegebeur (2004) examined the nonparametric estimation of $(\gamma,\sigma)$ using the kernel-weighted log-likelihood approach. 
However, if the dimension of covariate is large, then fully nonparametric estimation becomes affected adversely by the curse of dimensionality, which leads to poor estimation. 
To avoid this outcome, we introduce the GAM and penalized log-likelihood estimation.

\subsection{Generalized additive model}

For fixed point $x=(x^{(1)},\ldots,x^{(p)})^\top \in{\cal X}$ and $z=(z^{(1)},\ldots,z^{(d)})^\top \in{\cal Z}$, $\gamma$ and $\sigma$ are modeled using the additive model as
$$
\gamma(x,z)=\sum_{j=1}^p \beta_j x^{(j)} + \sum_{j=1}^d g_j(z^{(j)}) =\beta^\top  x+ \sum_{j=1}^d g_j(z^{(j)}) 
$$
and 
$$
\log \sigma(x,z)=\sum_{j=1}^p u_j x^{(j)} + \sum_{j=1}^d s_j(z^{(j)})= u^\top x +\sum_{j=1}^d s_j(z^{(j)}),
$$
where $\beta=(\beta_1,\ldots,\beta_p)^\top \in\mathbb{R}^p$ and $u=(u_1,\ldots,u_p)^\top \in\mathbb{R}^p$ are unknown parameter vectors, and $g_j, s_j :\mathbb{R}\rightarrow\mathbb{R}$ are unknown univariate nonparametric functions. 
For convenience, we assume that $X_i^{(1)} \equiv 1 (i=1,\ldots,n)$. 
Therefore, $\beta_1$ and $u_1$ respectively represent intercept parameters for $\gamma$ and $\log\sigma$.
To obtain identifiability of nonparametric function, we assume that $E[g_j(Z_{i}^{(j)})]=E[s_j(Z_{i}^{(j)})]=0$ for $j=1,\ldots,d$. 
The model above in GPD regression is the so-called extreme value generalized additive models, as presented by Chavez-Demoulin and Davison (2005) and by Youngman (2019). 

We provide the estimation method of $(\beta,u,g_1,\ldots,g_d,s_1,\ldots,s_d)$. 
In the following, for simplicity, the support of $Z$ is set as ${\cal Z}=[0, 1]^d$; that is, $Z_i^{(j)}\in[0,1]$ for all $j=1,\ldots,d$. 
The nonparametric functions $g_j, s_j$'s are approximated by the $B$-spline model. 
Let $0=\kappa_0<\kappa_1<\cdots<\kappa_{K+1}=1$ be the sequence of knots. 
In addition, for some $\xi>0$, we define another $2\xi$ knots as $\kappa_{-\xi}=\cdots=\kappa_{-1}=\kappa_0$ and $\kappa_{K+1}=\kappa_{K+2}\cdots=\kappa_{K+\xi+1}$. 
For simplicity, we assume that the location of knots is equidistant; that is, $\kappa_j-\kappa_{j-1}=1/(K+1)$, but this can be relaxed (see, (3.1) of Xiao 2019). 
Then, we let $\{\psi^{[\xi]}_0(\cdot),\ldots,\psi^{[\xi]}_{K+\xi}(\cdot)\}$ be $\xi$th degree or $(\xi+1)$th order $B$-spline bases, where $\psi^{[\xi]}_k:[0,1]\rightarrow \mathbb{R}_+$. 
The definition and some basic properties of $B$-spline bases are clarified in work reported by de Boor (2001). 
We next transform the ordinary $B$-spline bases $\{\psi^{[\xi]}_0(\cdot),\ldots,\psi^{[\xi]}_{K+\xi}(\cdot)\}$ to the normalized $B$-spline bases (see, Liu et al. 2011). 
For $j=1,\ldots,d$ and $k=1,\ldots,K+\xi-1$, we define 
$$
B^{[\xi]}_{j,k}(\tilde{z})= \frac{\bar{\psi}^{[\xi]}_{j,k}(\tilde{z})}{\|\bar{\psi}^{[\xi]}_{j,k}\|} ,\ \ k= 1,\ldots,K+\xi, 
$$
where $\bar{\psi}^{[\xi]}_{j,k}(\tilde{z})=\psi^{[\xi]}_k(\tilde{z})- (\phi_{j,k}/\phi_{j,k-1})\psi^{[\xi]}_{k-1}(\tilde{z})$, $\phi_{j,k}=E[\psi_k^{[\xi]}(Z_{i}^{(j)})]$ and $\|\cdot\|$ denotes the Euclidean norm. 
For $\tilde{z}\in[0,1]$, the normalized $B$-spline model is defined as 
$$
\bar{g}_j(\tilde{z})= \sum_{k=1}^{K+\xi} B^{[\xi]}_{j,k}(\tilde{z}) b_{j,k}=B_j(\tilde{z})^\top b_j
$$
and 
$$
\bar{s}_j(\tilde{z})= \sum_{k=1}^{K+\xi} B^{[\xi]}_{j,k}(\tilde{z}) c_{j,k}=B_j(\tilde{z})^\top  c_j,
$$
where 
$B_j(\tilde{z})=(B^{[\xi]}_{j,1}(\tilde{z}),\ldots, B^{[\xi]}_{j,K+\xi}(\tilde{z}))^\top $, and $b_j=(b_{j,1},\ldots,b_{j,K+\xi})^\top $ and $c_j=(c_{j,1},\ldots,c_{j,K+\xi})^\top $ are $(K+\xi)$-unknown parameter vectors. 
By the definition of $B^{[\xi]}_{j,k}$, we can confirm easily that $E[\bar{g}_j(Z^{(j)})]=E[\bar{s}_j(Z^{(j)})]=0$. 
We then consider that the additive functions $g_j,s_j$ are approximated by the normalized $B$-spline model: $g_j\approx \bar{g}_j$ and $s_j\approx \bar{s}_j$ for $j=1,\ldots,d$. 
Then, for fixed point $x\in\mathbb{R}^p$ and $z=(z^{(1)},\ldots,z^{(d)})^\top \in[0,1]^d$, $\gamma$ and $\sigma$ are approximated by 
$$
\bar{\gamma}(x,z)=x^\top \beta +\sum_{j=1}^d \bar{g}_j(z^{(j)})=x^\top \beta +\sum_{j=1}^d B_j(z^{(j)})^\top b_j
$$
and 
$$
\log \bar{\sigma}(x, z)=x^\top u +\sum_{j=1}^d \bar{s}_j(z^{(j)})= x^\top u +\sum_{j=1}^d B_j(z^{(j)})^\top c_j. 
$$ 
Accordingly, our purpose is to estimate the parameter vector $(\beta, u, b, c)$, where $b=(b_1^\top ,\ldots,b_d^\top )^\top \in\mathbb{R}^{d(K+\xi)}$ and $c=(c_1^\top ,\ldots,c_d^\top )^\top \in\mathbb{R}^{d(K+\xi)}$.

The minus log-likelihood of $(\beta, u, b, c)$ is 
$$
\ell(\beta, u, b, c)= -\frac{1}{N}\sum_{i=1}^N \log h(Y_i|\bar{\gamma}(X_i,Z_i), \bar{\sigma}(X_i,Z_i))I(Y_i>0).
$$
Then, $(\beta, u, b, c)$ is estimated by minimizing the penalized (minus) log-likelihood as
\begin{eqnarray}
\ell_{pen}(\beta, u, b, c)=\ell(\beta, u, b, c) +\sum_{j=1}^d\left\{ \lambda_j \int_0^1 \{\bar{g}^{(m)}_j(z)\}^2dz
+\nu_j \int_0^1\{\bar{s}^{(m)}_j(z)\}^2dz\right\}, \label{Lpen}
\end{eqnarray}
where $\lambda_j$'s and $\nu_j$'s are smoothing parameters and where $m$ is some integer smaller than $\xi$.
In fact, if $m>\xi$, then $\bar{\gamma}^{(m)}_j=\bar{s}^{(m)}_j\equiv 0$. 
In practice, $(\xi,m)=(3,2)$ is standard.
The estimator obtained from (\ref{Lpen}) is denoted by $(\hat{\beta}, \hat{u}, \hat{b}, \hat{c})$, where $\hat{b}=(\hat{b}_1^\top ,\ldots,\hat{b}_d^\top )^\top $ and $\hat{c}=(\hat{c}_1^\top ,\ldots,\hat{c}_d^\top )^\top $. 
From these estimators, we construct
$$
\hat{\gamma}(x,z)= x^\top \hat{\beta}+ \sum_{j=1}^d \hat{g}_j(z^{(j)})
$$
and 
$$
\hat{\sigma}(x, z)=\exp\left[x^\top \hat{u}+ \sum_{j=1}^d \hat{s}_j(z^{(j)}) \right],
$$
where $\hat{g}_j(z^{(j)})=B^{[\xi]}(z^{(j)})^\top  \hat{b}_j$ and $\hat{s}_j(z^{(j)})=B^{[\xi]}(z^{(j)})^\top  \hat{c}_j$.
The estimator above can be implemented via the R-package {\sf evgam}. 
Therefore, numerical performance of $\hat{\gamma}$ and $\hat{\sigma}$ is already guaranteed by {\sf evgam}. 
No result of the theoretical evidence of $\hat{\gamma}$ and $\hat{\sigma}$ has been stated thus far.
We establish the asymptotic theory for $\hat{\gamma}$ and $\hat{\sigma}$ as presented in the next section. 

\begin{remark}
\textup{
The penalty in (\ref{Lpen}) is known as O'Sullivan's penalty (see O'Sullivan 1986). Another commonly used penalty in GAM is the P-spline penalty (Eilers and Marx 1996, Marx and Eilers 1998).
Roughly speaking, the P-spline penalty is given as $D_{m,K}^\top R_m D_{m,K}$, replacing $R_m$ with identity matrix and $D_{m,K}$ with $D_{m,K-1}$ in Lemma 4 of Appendix B.  
Therefore, we specifically examine O'Sullivan's penalty herein. 
Both penalties are implemented in the R packages {\sf mgcv} and {\sf evgam}.
The practical performances of above two penalties have been discussed by Wand and Ormerod (2008), Ruppert et al. (2009), and Eilers (2015). 
From a theoretical perspective, Xiao (2019) showed that the asymptotic behavior of the estimator with P-spline is almost similar to that with O'Sullivan's penalty when the smoothing parameters are adjusted.
Similarly, the results presented in the next section can also be applied to estimators with the P-spline penalty with some adjustments to the smoothing parameters settings.
}
\end{remark}

\section{Asymptotic Theory}

\subsection{General condition}

We first define the functional space of the additive model. 
Let 
$$
{\cal A}_j=\{\alpha:[0,1]\rightarrow\mathbb{R} | E[\alpha(Z_{i}^{(j)})]=0, V[\alpha(Z_{i}^{(j)})]<\infty \}.
$$
We further let 
$$
{\cal T}_{A}=\left\{f:{\cal X}\times{\cal Z}\rightarrow\mathbb{R} | f(x,z)= x^\top \beta+ \sum_{j=1}^d q_j(z^{(j)}), q_j\in {\cal A}_j, \beta\in\mathbb{R}^p \right\}.
$$
We then assume that for any $(x,z)\in{\cal X}\times{\cal Z}$, $F(\cdot|x,z)\in {\cal D}(G(\cdot|\gamma_0(x,z)))$ with $\gamma_0\in{\cal T}_{A}$. 
However, although $\log \hat{\sigma}(x,z)$ is constructed via the additive model, we cannot know whether $\log \sigma_{w}^\dagger\in{\cal T}_{A}$ because $\sigma^\dagger_w$ depends on $\tau(w|x,z)$.
Therefore, we use the alternative target scale function to investigate the asymptotic behavior of $\hat{\sigma}$. 
Define the target functions as 
\begin{eqnarray}
(\gamma_0,\sigma_{w0})=\underset{(\gamma,\log\sigma)\in{\cal T}_{A}\times{\cal T}_{A}}{\argmin} -E[\log h(Y|\gamma(X,Z),\sigma(X,Z))]. \label{TrueRisk}
\end{eqnarray}
Then, $\sigma_{w0}$ and $\sigma^\dagger_{w}$ are not equal. However, this model bias cannot be evaluated or improved as long as we consider the GAM. 
Herein, for simplicity, we assume that such bias can be ignored.
Accordingly, from (\ref{SigSeq}), $\sigma_{w0}$ satisfies
\begin{eqnarray}
\left\{
\begin{array}{ll}
\displaystyle\lim_{w\rightarrow\infty} \frac{\sigma_{w0}(x,z)}{\tau(w|x,z)}=O(1),& \gamma_0(x,z)>0,\\
\displaystyle\lim_{w\rightarrow \infty} \frac{\sigma_{w0}(x,z)}{y^*(x,z)-\tau(w|x,z)}=O(1)& \gamma_0(x,z)<0,\\
\displaystyle \lim_{w\rightarrow\infty} \sigma_{w0}(x,z)=\sigma_0(x,z),& \gamma_0(x,z)=0,\\
\end{array}
\right. \label{SigSeq2}
\end{eqnarray}
where $\log \sigma_0\in{\cal T}_{A}$.

We continue with discussion of the behavior of $\sigma_{w0}$. 
Because $\sigma_{w0}$ depends on $w$, the true coefficients of the parametric part and each additive function of the nonparametric part in $\log \sigma_{w0}$ might also vary with $w$. 
Therefore, for $\gamma_0$ and $\sigma_{w0}$, there exist $\beta_0=(\beta_{01},\ldots,\beta_{0p})^\top , u_{w0}=(u_{w,01},\ldots,u_{w,0p})^\top \in\mathbb{R}^p$ and $g_{0j},s_{w,0j}\in{\cal A}_j (j=1,\ldots,d)$ such that for any $x=(x^{(1)},\ldots,x^{(p)})\in{\cal X}$ with $x^{(1)}=1$ and $z=(z^{(1)},\ldots,z^{(d)})\in{\cal Z}$, 
\begin{eqnarray}
\gamma_0(x,z)=x^\top \beta_0+\sum_{j=1}^d g_{0j}(z^{(j)}) \label{additiveEVI}
\end{eqnarray}
and 
\begin{eqnarray}
\log \sigma_{w0}(x,z)=x^\top u_{w0}+\sum_{j=1}^d s_{w,0j}(z^{(j)}). \label{additiveScale}
\end{eqnarray}
For (\ref{additiveScale}), we assume that coefficients except for intercept and each additive function are independent from $w$, as denoted by $u_{w,0j}=u_{0j}$ for $j=2,\ldots,p$ and $s_{w,0k}=s_{0k}$ for $k=1,\ldots,d$. 
That is, (\ref{additiveScale}) can be written as 
\begin{eqnarray}
\log \sigma_{w0}(x,z)=u_{w,01}+\sum_{j=2}^p u_{0j}x^{(j)}+\sum_{j=1}^d s_{0j}(z^{(j)}). \label{additiveScale2}
\end{eqnarray}
For $\gamma_0(x,z)>0$, if we choose $\tau$ as $\tau(w|x,z)=a_1(w)\tau_1(x,z)$ for some functions $a_1:\mathbb{R}\rightarrow\mathbb{R}$ and $\tau_1:{\cal X}\times{\cal Z}\rightarrow\mathbb{R}$ with $a_1(w)\rightarrow \infty (w\rightarrow\infty)$, (\ref{additiveScale2}) holds with $u_{w,01}=\log a_1(w)+u_0, u_0\in\mathbb{R}$. 
A simple but important example is $a_1(w)=w$ and $\tau_1(x,z) \equiv 1$. 
For $\gamma_0(x,z)<0$, we obtain (\ref{additiveScale2}) when $\tau$ can be written as $y^*(x,z)-\tau(w|x,z)=a_2(w)\tau_2(x,z)$ for some functions $a_2:\mathbb{R}\rightarrow\mathbb{R}$ and $\tau_2:{\cal X}\times{\cal Z}\rightarrow\mathbb{R}$ with $a_2(w)\rightarrow 0 (w\rightarrow\infty)$. 
Because $y^*(x,z)$ is unknown, identifying such $a_2$ and $\tau_2$ is difficult in practice.

\begin{remark}
\textup{
We provide an additional explanation of the assumption of (\ref{additiveScale2}). 
For simplicity, we assume that $\gamma_0(z)>0$ for all $z=(z^{(1)},z^{(2)})\in[0,1]^2$ and (\ref{additiveScale}) as $\sigma_{w0}(z)=\exp[u_{w0}+s_{w,01}(z^{(1)})+s_{w,02}(z^{(2)})]$. 
Then, from (\ref{SigSeq2}), roughly speaking, $s_{w,01}$ might tend to infinity because $\tau(w|z)\rightarrow\infty$ and $\sigma_{w0}\rightarrow \infty$ as $w\rightarrow\infty$. 
However, from the condition $E[s_{w,01}(Z^{(1)})]=0$ for the identifiability of function, if $s_{w,01}(z^*)\rightarrow \infty$ for a point $z^*\in[0,1]$, another point $z^{**}\in[0,1]$ exists such that $s_{w,01}(z^{**})\rightarrow -\infty$. 
Nevertheless, for all $z\in[0,1]^2$, $\sigma_{w0}(z)$ must diverge to infinity as $w\rightarrow\infty$. 
This yields that $s_{w,01}(z^{**})/u_{w0}\rightarrow 0$ as $w\rightarrow\infty$. 
Therefore, $u_{w0}$ should dominate other components as the sequence of $w$. 
Furthermore, $s_{w,0j}=O(1)$ or $s_{w,0j}=o(1)$ can be allowed as long as $\sigma_{w0}\rightarrow\infty$. 
In fact, if $\tau(w|z)=w$, then we can obtain that $u_{w0}=\log w+u_0$ with $u_0\in\mathbb{R}$ and $s_{w,0j}=O(1)$ from (\ref{SigSeq2}).
Consequently, the construction of $s_{w,0j}$ as the sequence of $w$ becomes quite complicated in (\ref{additiveScale}); (\ref{additiveScale2}) is the simplest model with no contradiction as (\ref{SigSeq2}).
}
\end{remark}

As we have said, the behavior of $\sigma_{w0}$ depends on the signature of $\gamma_0$. 
From (\ref{SigSeq2}), if $\gamma_0(x,z)>0$, $\sigma_{w0}(x,z)\rightarrow \infty$, and if $\gamma_0(x,z)<0$, then we have $\sigma_{w0}(x,z)\rightarrow 0$. 
Therefore, if $\gamma_0(x_0,z_0)=0$ at the point $(x_0,z_0)\in{\cal X}\times{\cal Z}$ and $\gamma_0(x,z)$ can take both positive and negative at the neighborhood of $(x_0,z_0)$, then $\sigma_{w0}(x,z)$ is not continuous at $(x_0,z_0)$ although $\log \sigma$ is modeled by continuous additive functions. 
For that reason, when we consider the additive model for the scale function, the sign of the shape function is assumed to be fixed, even if it is unknown.
From this, we consider the following three cases separately: (i) $\inf_{(x,z)\in{\cal X}\times{\cal Z}}\gamma_0(x,z)>0$, (ii) $\sup_{(x,z)\in{\cal X}\times{\cal Z}}\gamma_0(x,z)<0$ and (iii) $\gamma_0(x,z)\equiv 0$ for all $(x,z)\in{\cal X}\times{\cal Z}$. 
One might consider case (iii) to be unrealistic, but if $F(\cdot|x,z)$ is the Gaussian distribution, then it belongs to ${\cal D}(G(\cdot|\gamma_0(x,z)))$ with $\gamma_0(x,z)= 0$. 
If the sign of $\gamma_0$ varies among $(x,z)\in{\cal X}\times{\cal Z}$, then the additive modeling for $\sigma_{w0}$ would not be performed better. 
Such a case is beyond the scope of this study. 
Therefore, we first state following conditions concerned with positive, negative, and zero shape function. 

\begin{enumerate}
\item[(S1)] Constant $\gamma_{min}>0$ exists such that $\gamma_{min}<\gamma_0(x,z)$ for all $(x,z)\in{\cal X}\times{\cal Z}$. 
\item[(S2)] For some $\delta>0$, $-1/(2+\delta)<\gamma_0(x,z)<0$ for all $(x,z)\in{\cal X}\times{\cal Z}$. 
\item[(S3)] For all $(x,z)\in{\cal X}\times{\cal Z}$, $\gamma_0(x,z)=0$. 
\end{enumerate}
We establish the asymptotic theory of $(\hat{\gamma},\hat{\sigma})$ under each condition (S1), (S2), or (S3).

As the true setting, (\ref{DomCon1}), (\ref{TrueRisk}), (\ref{SigSeq2}), and (\ref{additiveScale2}) are assumed. 
In addition, we state the technical conditions to investigate the asymptotic property of the estimator for all cases (S1)--(S3). 
For $a>0$, let ${\cal C}^a$ be the class of functions with $a$th continuously differentiable on [0,1]. 
Define 
\[
\lambda =\max_j \lambda_j,\ \ \nu=\max_j \nu_j.
\]
For simplicity, we write 
\[
p_N\equiv P(Y>0)=P(Y^*>\tau(w_N\mid X, Z)).
\]

The mathematical conditions to establish the asymptotic result of the estimator are listed below.

\begin{itemize}
\item[(C1)] The log-likelihood function $E[\log h(Y|\gamma(X,Z),\sigma(X,Z))]$ is a concave function with respect to $(\gamma,\sigma)\in{\cal T}_A\times{\cal T}_A$. 
\item[(C2)] For $k=1,\ldots,d$, $g_{0k}\in{\cal A}_k\cap {\cal C}^\zeta$ and $s_{0k}\in {\cal A}_k\cap {\cal C}^\zeta$ with $\zeta=\xi+1$.
\item[(C3)] As $N\rightarrow\infty$, $p_N\rightarrow 0$ and $N p_N \rightarrow\infty$.  
\item[(C4)] In (\ref{DomCon1}), there exists a continuous and bounded function $\alpha_1(x,z)$ such that $\alpha(\tau(w_N\mid x,z)\mid x,z)= \alpha_1(x,z)(1-F(\tau(w_N\mid x,z)\mid x,z))^{-\rho(x,z)}(1+o(1))$ as $N\rightarrow\infty$. 
In addition, there exists a constant $\rho<0$ such that $\sup_{(x,z)\in{\cal X}\times{\cal Z}}\rho(x,z)\leq \rho<0$. 
\item[(C5)] For the number of knots $K$ and smoothing parameters $\lambda$ and $\nu$, satisfy $K\rightarrow\infty$, $K\log(N)/Np_N\rightarrow 0$, $\lambda p_N^{-1} K^{2m}=O(1)$ and $\nu p_N^{-1}K^{2m}=O(1)$ as $N\rightarrow\infty$. 
\end{itemize}

(C1) is the standard condition in the log-likelihood type estimation method. 
If $\gamma$ and $\sigma$ are constant, then (C1) can be reduced to $\gamma>-1/2$ (Smith 1987, Drees et al. 2004). 
On the other hand, (C1) is the minimal condition in GAM in GPD regression because the covariate information is included in the Fisher information matrix of $E[\log h(Y|\gamma(X,Z),\sigma(X,Z))]$. 
(C2) defines the smoothness of true additive function. 
This is also standard when the order of $B$-spline basis is $\zeta=\xi+1$ (see, Xiao 2019). 
(C3) explains the condition of sample size of data exceeding threshold. 
For the effective sample size $n=\sum_{i=1}^N I(Y_i>0)$, it can be shown easily that $E[n]/N=P(Y>0)=p_N$ and $V[n/N]=p_N(1-p_N)/N\rightarrow 0$ under (C3). 
From this result, we obtain $n\approx N p_N$ and $n\rightarrow\infty$ but $n/N\rightarrow 0$ as $N\rightarrow \infty$. 
Such $n$ is the so-called intermediate sequence in EVT as presented in Section 2 of  de Haan and Ferreira (2006). 
In this sense, (C3) is natural in the EVT.
(C4) controls the second-order bias in the EVT. 
We provide some justification of (C4) in Remark \ref{rem2}. 
The case $\rho(x,z)=0$ is removed in (C4). 
If $\rho(x,z)=0$, then this corresponds to $\alpha(\tau(w_N\mid x,z)\mid x,z)=O(\log P(Y_i>0))$ for example. 
This rate is too slow to explain the efficiency of the estimator. 
(C5) examines the rates of tuning parameters to obtain the optimal rate of convergence of the estimator. 
Xiao (2019) uses similar conditions of the number of knots and smoothing parameters. 
Remark 6 in the following subsection is also relevant.

\begin{remark}\label{rem2}
\textup{
We note in (C4). 
For $t>0$, let $U(t\mid x,z)=F^{-1}(1-1/t\mid x,z)$. 
Then, from Theorems 2.3.3 and 2.3.8 presented by de Haan and Ferreira (2006), the equivalence condition to (\ref{DomCon1}) is that there exists a function $\Lambda$ such that 
$$
\lim_{t\rightarrow \infty}\sup_{(x,z)\in{\cal X}\times{\cal Z}}\left| \frac{\frac{U(t\eta\mid x,z)-U(t\mid x,z)}{a_t(x,z)}-\frac{(\eta^{-\gamma_0(x,z)}-1)}{\gamma_0(x,z)}}{\Lambda(t\mid x,z)}-\tilde{Q}(\eta\mid \gamma_0(x,z),\rho(x,z))\right|  \rightarrow 0
$$ 
for all $\eta>0$, where $a_t(x,z)=\sigma_w^\dagger(x,z)$ with $\tau(w\mid x,z)=U(t\mid x,z)$ and the function $\Lambda(t\mid x,z)$ satisfying $\Lambda(\eta t \mid x,z)/\Lambda(t \mid x,z)\rightarrow \eta^{\rho(x,z)}$ as $t\rightarrow\infty$.  
As one of typical choice of $\Lambda$, we use $\Lambda(\eta t \mid x,z)=\alpha_1(x,z) t^\rho(x,z)$ with some bounded function $\alpha_1(x,z)$, which is given in (C4).
Although we can consider another choice of $\Lambda$, e.g., $\Lambda(t \mid x, z) = \alpha_1(x, z)t^{\rho(x,z)} \log t$, these cases are omitted from this paper.
From Theorem 2.3.8 presented by de Haan and Ferreira (2006), we obtain $\alpha(t\mid x,z)=\Lambda(1/\{1-F(t\mid x,z)\}\mid x,z)$, which implies that $\alpha(\tau(w\mid x,z)\mid x,z)=\alpha_1(x,z) \{1-F(\tau(w_N\mid x,z)\mid x,z)\}^{-\rho(x,z)}$. 
}
\end{remark}

\subsection{Main results}

For any function $r:{\cal X}\times{\cal Z}\rightarrow\mathbb{R}$, we define $L_2$-norm of $r$ as $\|r\|_{L_2}= \sqrt{E[r(\tilde{X},\tilde{Z})^2]}$, where $(\tilde{X},\tilde{Z})$ is the random variable having the same distribution as $(X,Z)$ independently.
If $r$ depends on sample $\{(Y_i,X_i,Z_i):i=1,\ldots,n\}$, then $E$ takes the expectation for not only $(\tilde{X},\tilde{Z})$ but also the sample. 
The $L_\infty$-norm of $r$ is defined as $\|r\|_{\infty}=\sup_{(x,z)\in{\cal X}\times{\cal Z}}|r(x,z)|$. 
We first describe the $L_2$-rate of convergence of the estimator.

\begin{theorem}\label{ThmL2}
Suppose that {\rm (C1)--(C5)}. 
In each scenario {\rm (S1), (S2)} or {\rm (S3)}, as $N\rightarrow \infty$,
\begin{eqnarray*}
\|\hat{\gamma}-\gamma_0\|_{L_2}&\leq& O\left(\sqrt{\frac{K}{Np_N}}\right)+O(K^{-m})+O\left(p_N^{-\rho}\right),\\
\left\|\log \hat{\sigma}-\log \sigma_{w0} \right\|_{L_2}
&\leq& O\left(\sqrt{\frac{K}{N p_N}}\right)+O(K^{-m})+O\left(p_N^{-\rho}\right).
\end{eqnarray*}
Under the optimal rate of number of knots $K= O(\{N p_N\}^{1/(2m+1)})$, 
\begin{eqnarray*}
\|\hat{\gamma}-\gamma_0\|_{L_2}&\leq& O\left((Np_N)^{-m/(2m+1)}\right)+O\left(p_N^{-\rho}\right),\\
\left\|\log \hat{\sigma}-\log \sigma_{w0} \right\|_{L_2}
&\leq&O\left((Np_N)^{-m/(2m+1)}\right)+O\left(p_N^{-\rho}\right).
\end{eqnarray*}
If we take $p_N=O(N^{-1/(1-2\rho+1/m)})$, then the optimal convergence rates of the estimators are
\begin{eqnarray*}
\|\hat{\gamma}-\gamma_0\|_{L_2}&\leq& O\left(N^{\frac{\rho}{1-2\rho+1/m}}\right),\\
\left\|\log \hat{\sigma}-\log \sigma_{w0} \right\|_{L_2}
&\leq&O\left(N^{\frac{\rho}{1-2\rho+1/m}}\right).
\end{eqnarray*}
\end{theorem}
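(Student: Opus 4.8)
The plan is to pass from functions to the coefficient vector $\theta=(\beta,u,b,c)$ and back. Let $\theta_0$ collect the coefficients of the best $L_2$ spline approximations of $\gamma_0$ in (\ref{additiveEVI}) and of $\log\sigma_{w0}$ in (\ref{additiveScale2}); since the $w$‑growing intercept $u_{w,01}$ is a single scalar it is absorbed by $\hat u_1$ and cancels in $\log\hat\sigma-\log\sigma_{w0}$, and because the Fisher information is computed under $\log$‑scale it does not inflate $\mathrm{Var}(\hat u_1)$. Using Lemma \ref{Bsplinematrix} together with (C1), the design Gram matrix $E[(X,B(Z))(X,B(Z))^\top]$ has eigenvalues bounded away from $0$ and $\infty$, so $\|\hat\gamma-\gamma_0\|_{L_2}$ and $\|\log\hat\sigma-\log\sigma_{w0}\|_{L_2}$ are each bounded by a constant times $\sqrt{E\|\hat\theta-\theta_0\|_2^2}$ plus the deterministic spline bias $O(K^{-\zeta})$, which is negligible under (C2) and (C5). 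So it suffices to bound $E\|\hat\theta-\theta_0\|_2^2$.

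Next I would write the penalized score equation $\nabla\ell_{pen}(\hat\theta)=0$, expand by the mean‑value theorem around $\theta_0$ to get $\hat\theta-\theta_0=-[\nabla^2\ell_{pen}(\tilde\theta)]^{-1}\nabla\ell_{pen}(\theta_0)$ for some $\tilde\theta$ on the segment, and decompose $\nabla\ell_{pen}(\theta_0)$ into three pieces. (i) The centered likelihood score: the GPD scores with respect to $\gamma$ and $\log\sigma$ have conditional variances that are bounded functions of $\gamma_0(x,z)$ only — exactly why the $\log$‑scale parametrization is used, so that the information stays stable although $\sigma_{w0}\to\infty$ under (S1) or $\to0$ under (S2) — and since $\sum_k B^{[\xi]}_{j,k}(z)^2=O(K)$ uniformly, a second‑moment computation gives $E\|\nabla\ell(\theta_0)-E\nabla\ell(\theta_0)\|_2^2=O(nK)$, which after inverting the Hessian produces the $O(\sqrt{K/n})$ term. (ii) The mean of the likelihood score is nonzero only because $F_w$ differs from the GPD $H$: by the second‑order condition (\ref{DomCon1}) and (C4) the conditional mean score is $O((N/n)^\rho)$ uniformly, and since $E[|B^{[\xi]}_{j,k}(Z^{(j)})|]=O(K^{-1/2})$ one gets $\|E\nabla\ell(\theta_0)\|_2=O\big(n[(N/n)^\rho+K^{-\zeta}]\big)$, hence the $O((N/n)^\rho)$ term. (iii) The penalty gradient $2\lambda D_m b_0\oplus2\nu D_m c_0$: because $b_0,c_0$ lie in the ellipsoid $\{v:v^\top D_m v=O(1)\}$ (from (C2)) while $\|D_m\|_{op}=O(K^{2m})$ for the $m$th‑derivative matrix of normalized $B$‑splines (cf.\ Xiao 2019), one has $\|D_m b_0\|=O(K^m)$, producing the $O(\lambda K^m)+O(\nu K^m)$ terms.

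The core step, and the one I expect to be the main obstacle, is to show that with probability tending to one $-\nabla^2\ell_{pen}(\tilde\theta)/n$ is bounded below by a positive multiple of the identity, uniformly over $\tilde\theta$ in a shrinking neighbourhood of the $K$‑dimensional $\theta_0$. This combines the non‑singularity and boundedness of the GPD Fisher information under each of (S1)--(S3) — here (S2)'s restriction $\gamma_0>-1/(2+\delta)>-1/2$ is essential, since the information degenerates as $\gamma\to-1/2$, and (S3) is the regular boundary case $\gamma=0$ — with the positive‑definiteness of $E[B(Z)B(Z)^\top]$ from Lemma \ref{Bsplinematrix} and (C1), plus a uniform law of large numbers over a growing‑dimensional set controlled by an empirical‑process/bracketing argument that needs $K\log N/n\to0$ from (C5) and the local support of the $B$‑spline basis. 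The penalty Hessian only adds a positive‑semidefinite term of operator norm $O(\lambda K^{2m}+\nu K^{2m})=O(1)$ and so does not spoil the lower bound. Verifying that this uniform control holds, and that all the GPD moment identities invoked in (i)--(ii) hold uniformly in $(x,z)$ despite the unbounded GPD support under (S1), is the delicate part; everything else is bookkeeping. On the complementary low‑probability event one bounds $\|\hat\theta-\theta_0\|$ crudely (the estimator lying in a fixed bounded region), which contributes negligibly to the expectation.

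Assembling (i)--(iii) on the good event gives $E\|\hat\theta-\theta_0\|_2^2=O(K/n)+O(\lambda^2K^{2m})+O(\nu^2K^{2m})+O((N/n)^{2\rho})$; taking square roots and using the Gram‑matrix translation of the first paragraph yields the first pair of displayed bounds. Substituting $K=O(n^{1/(2m+1)})$ and using $\lambda K^{2m}=O(1)$, $\nu K^{2m}=O(1)$ from (C5) collapses the first three terms into $O(n^{-m/(2m+1)})$, giving the second pair. Finally, choosing the threshold level $w_N$ so that the exceedance count $n$ balances $n^{-m/(2m+1)}$ against $(N/n)^\rho$, i.e.\ $n=O\!\big(N^{-\rho(2m+1)/\{m-\rho(2m+1)\}}\big)$, and simplifying the resulting exponent gives the third pair.
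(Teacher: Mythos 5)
Your proposal follows essentially the same route as the paper: reduce both $L_2$ errors to $\|\hat\theta-\theta_0\|$ via the spline approximation bias $O(K^{-\zeta})$ and the eigenvalue bounds on the design Gram matrix (the paper's Lemma \ref{Bsplinematrix} and (C1)), then Taylor-expand the penalized score around $\theta_0$, decompose the gradient into the centered score ($O(\sqrt{K/n})$, Lemma \ref{GradientSt}), the mean score driven by the second-order condition ($O((N/n)^\rho)$, Lemma \ref{GradientEx}), and the penalty gradient ($O(\lambda K^m)+O(\nu K^m)$), and lower-bound the Hessian uniformly on a shrinking neighbourhood established by a covering/Bernstein consistency argument (Lemmas \ref{ConvPara}, \ref{Hessian}, \ref{RatePara}). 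The only deviations are cosmetic — you bound the penalty gradient by an operator-norm/ellipsoid argument where the paper computes $\Omega b_0$ explicitly from the $B$-spline derivative representation, and you work with the unnormalized likelihood — so the argument is correct and matches the paper's proof.
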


In the first assertion of Theorem \ref{ThmL2}, the term $O(\{K/(Np_N)\}^{1/2})$ expresses the order of standard deviation of the estimator, whereas $O(K^{-m})$ is the bias occurring from penalized spline method and $O(p_N^{-\rho})$ is the bias came from the approximation of GPD (\ref{DomCon1}). 
The term $O(p_N^{-\rho})$ depends on the second-order parameter $\rho$ given in (C4).
If (C4) is not assumed, then this term $O(p_N^{-\rho})$ must be changed by $E[\alpha(\tau(w_N\mid X,Z)\mid X,Z)]$, which is difficult to be understood. 
Therefore, to attain easy interpretation of the bias, using the effective sample rate and second-order parameter (C4) is helpful.

Drees (2001) shows that the minimax optimal rate of the estimator of positive EVI in one-dimensional data is $O(N^{\rho/(1-2\rho)})$. 
Consequently, the optimal rate in the last assertion of Theorem \ref{ThmL2} is slightly slower than $O(N^{\rho/(1-2\rho)})$ because $m>0$. 
This result is not surprising because the nonparametric estimator has a slower rate than the parametric estimator, as described by Tsybakov (2009) and others.

Next, we investigate the $L_\infty$-convergence of the estimator. 

\begin{theorem}\label{Linfty}
Suppose that {\rm (C1)--(C5)}. 
Under {\rm (S2)}, suppose that \[
\{K\log N\}^{1+\delta/2}/(Np_N)^{\delta/2}\rightarrow 0
\] as $N\rightarrow\infty$. 
Then, in each scenario {\rm (S1), (S2)} or {\rm (S3)}, as $N\rightarrow \infty$,
\begin{eqnarray*}
\|\hat{\gamma}-\gamma_0\|_{L_\infty}&\leq& O\left(\sqrt{\frac{K\log N}{N p_N}}\right)+O(K^{-m})+O\left(p_N^{-\rho}\right),\\
\left\|\log \hat{\sigma}-\log \sigma_{w0} \right\|_{L_\infty}
&\leq& O\left(\sqrt{\frac{K\log N}{Np_N}}\right)+O(K^{-m})+O\left(p_N^{-\rho}\right).
\end{eqnarray*}
Under the optimal rate of number of knots $K= O_P((Np_n/\log N)^{1/(2m+1)})$, 
\begin{eqnarray*}
\|\hat{\gamma}-\gamma_0\|_{L_\infty}&\leq& O\left((Np_N/\log N)^{-m/(2m+1)}\right)+O\left(p_N^{-\rho}\right),\\
\left\|\log \hat{\sigma}-\log \sigma_{w0} \right\|_{L_\infty}
&\leq&O\left((Np_N/\log N)^{-m/(2m+1)}\right)+O\left(p_N^{-\rho}\right).
\end{eqnarray*}
If we take $p_N=O((N/\log N)^{-1/(1-2\rho+1/m)})$, the optimal convergence rates of the estimators are
\begin{eqnarray*}
\|\hat{\gamma}-\gamma_0\|_{L_\infty}&\leq&O\left(\left(\frac{N}{\log N}\right)^{\frac{\rho}{1-2\rho+1/m}}\right),\\
\left\|\log \hat{\sigma}-\log \sigma_{w0} \right\|_{L_\infty}
&\leq&O\left(\left(\frac{N}{\log N}\right)^{\frac{\rho}{1-2\rho+1/m}}\right).
\end{eqnarray*}
\end{theorem}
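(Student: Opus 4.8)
The plan is to re-use the penalized-likelihood linearization that underlies Theorem~\ref{ThmL2}, but to control the stochastic term of the resulting expansion in the sup-norm instead of in $L_2$. A direct route --- bounding $\|\hat\gamma-\gamma_0\|_\infty$ by $\max_z\|B_j(z)\|_{\ell_2}$ times the $L_2$-rate of Theorem~\ref{ThmL2} --- is in general too lossy: because $\|B^{[\xi]}_{j,k}\|_\infty\asymp\sqrt K$ for the normalized $B$-spline basis, such a conversion costs an extra factor $\sqrt K$ and would give $O_p(K/\sqrt n)$ rather than the claimed $O_p(\sqrt{K\log N/n})$. The gain comes from the fact that the \emph{coefficient} error can be controlled in $\ell_\infty$ at the much faster rate $O_p(\sqrt{\log N/n})$, and only one factor $\|B_j(z)\|_{\ell_1}=O(\sqrt K)$ is then picked up in passing from coefficients to the fitted functions.

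Concretely, write $\theta=(\beta^T,u^T,b^T,c^T)^T$, let $D(x,z)$ be the combined design vector (the covariates $x$ together with $B_j(z^{(j)})$ for the shape block, and likewise for the scale block), and let $\theta_{w0}$ be the coefficient vector of the best spline approximation of $(\gamma_0,\log\sigma_{w0})$ in the additive--spline class. Theorem~\ref{ThmL2} supplies the preliminary consistency needed to localize: on an $\ell_2$-neighbourhood of $\theta_{w0}$ of vanishing radius the penalized score equation admits the Bahadur-type representation $\hat\theta-\theta_{w0}=\mathbb{J}_n^{-1}\mathbb{S}_n+r_n$, where $\mathbb{S}_n=n^{-1}\sum_{i=1}^n D(X_i,Z_i)\,\xi_i$ is the (penalized) score at the target with $\xi_i$ the GPD score contribution, $\mathbb{J}_n$ is the penalized empirical information matrix, and $r_n$ collects the spline-approximation bias, the penalty bias, the second-order EVT bias (present because, by (\ref{DomCon1}) and (C4), the conditional law of $Y$ equals $H(\cdot/\sigma_{w0}|\gamma_0)$ only up to an $O((N/n)^\rho)$ error, so $E[\mathbb{S}_n]$ is not exactly $0$), and a quadratic remainder.

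The argument then rests on three estimates. \emph{(i)} Since $E[(X^T,B(Z)^T)^T(X^T,B(Z)^T)]$ is positive definite by Lemma~\ref{Bsplinematrix} and (C1), each within-component diagonal block of $\mathbb{J}_n$ is banded of bandwidth $\xi$ while the cross-component blocks are negligible owing to the centering $E[B^{[\xi]}_{j,k}(Z^{(j)})]=0$ built into the normalized basis, a standard exponential-decay bound for inverses of (almost-)banded matrices --- as in the additive--spline literature, e.g. Xiao (2019) --- gives $\|\mathbb{J}_n^{-1}\|_{\infty\to\infty}=O_p(1)$. \emph{(ii)} Each coordinate of $\mathbb{S}_n$ is an average of $n$ independent, approximately centred summands with conditional variance $O(1)$ and envelope $O(\sqrt K)$ (from $\|B^{[\xi]}_{j,k}\|_\infty\asymp\sqrt K$); a Bernstein inequality with a union bound over the $O(dK)$ coordinates (and $K\le n\le N$) yields $\|\mathbb{S}_n\|_\infty=O_p(\sqrt{\log N/n})$. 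Under (S1) and (S3) the score $\xi_i$ has moments of all orders, so this is immediate; under (S2) the score has only $(2+\delta)$ finite moments --- this is exactly what $-1/(2+\delta)<\gamma_0(x,z)<0$ guarantees, the score blowing up near the finite right endpoint --- so one first truncates $\xi_i$ and balances the truncation bias against the Bernstein fluctuation, which is where the extra hypothesis $\{K\log N\}^{1+\delta/2}/n^{\delta/2}\to0$ enters. \emph{(iii)} In $r_n$, the $L_\infty$ spline-approximation bias is, by (C2), of smaller order than $\lambda K^m+\nu K^m$; the penalty bias is $O(\lambda K^m)+O(\nu K^m)$; the EVT bias is $O((N/n)^\rho)$ uniformly in $(x,z)$ by (C4); and the quadratic remainder is $o_p$ of the leading term, controlled by bounding the empirical-minus-population Hessian via the $L_2$-consistency of Theorem~\ref{ThmL2} together with $\sup_{(x,z)}\|D(x,z)\|_{\ell_1}=O(\sqrt K)$.

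Combining (i)--(iii), $\|\hat\theta-\theta_{w0}\|_\infty=O_p(\sqrt{\log N/n})+O(\lambda K^m)+O(\nu K^m)+O((N/n)^\rho)$, and evaluating the fits at an arbitrary $(x,z)$, $|\hat\gamma(x,z)-\gamma_0(x,z)|\le\|D(x,z)\|_{\ell_1}\|\hat\theta-\theta_{w0}\|_\infty+(\text{approximation bias})$ with $\|D(x,z)\|_{\ell_1}=O(\sqrt K)$, which produces the $O(\sqrt{K\log N/n})$ term of the first display; the identical computation applies to $\log\hat\sigma-\log\sigma_{w0}$. Substituting the balancing choice $K=O((n/\log N)^{1/(2m+1)})$ gives the second pair of displays, and choosing the threshold so that $n=O((N/\log N)^{-\rho(2m+1)/\{m-\rho(2m+1)\}})$ equates the two error contributions and yields the third, exactly as in Theorem~\ref{ThmL2}. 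I expect the main obstacle to be step (ii) in case (S2): running the truncated Bernstein argument with a $\sqrt K$ envelope while keeping the truncation bias below $\sqrt{\log N/n}$, which is precisely what the extra moment-type condition is designed to permit; a secondary difficulty is verifying that the Bahadur remainder is negligible in sup-norm and not merely in $L_2$.
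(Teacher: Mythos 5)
Your overall architecture (linearize the penalized score equation, isolate a stochastic term, a penalty bias, a spline-approximation bias and a second-order EVT bias, then get uniformity from Bernstein plus a union bound with truncation of the GPD score under (S1)--(S3)) matches the paper's, and your steps (ii) and (iii) — including the identification of where the extra condition $\{K\log N\}^{1+\delta/2}/n^{\delta/2}\to0$ enters under (S2) — are essentially what the paper does. But your route through the coefficients has a genuine gap at step (i). You need $\|\mathbb{J}_n^{-1}\|_{\infty\to\infty}=O_p(1)$ to convert the coordinatewise bound $\|\mathbb{S}_n\|_\infty=O_p(\sqrt{\log N/n})$ into $\|\hat\theta-\theta_0\|_\infty=O_p(\sqrt{\log N/n})$, and you justify it by saying the cross-component blocks of the information matrix are ``negligible owing to the centering.'' That is not correct: centering only makes $E[B_{j,k}(Z^{(j)})B_{j',k'}(Z^{(j')})]$ a covariance, which vanishes only if $Z^{(j)}$ and $Z^{(j')}$ are independent; in general these cross blocks are dense with entries of order $K^{-1}$, so the full additive Gram matrix is \emph{not} banded, and the Demko--Moss--Smith-type exponential-decay bounds for inverses of banded, well-conditioned matrices do not apply to it. Establishing an $\ell_\infty\to\ell_\infty$ bound on the inverse of the full (parametric $+$ $d$ spline blocks $+$ $\gamma$/$\sigma$ coupling) information matrix is a substantial piece of work that your proposal leaves unproven, and it is exactly the step the theorem's proof is organized to avoid.

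The paper never inverts in $\ell_\infty$: it uses only the spectral-norm bound $\|\Sigma^{-1}\|_{2\to2}=O(1)$ from Lemma \ref{Hessian}, writes the pointwise error at $(x,z)$ as $n^{-1}\sum_i G_i(x,z)$ with $G_i(x,z)=\ell_\gamma(Y_i|X_i,Z_i)A(x,z)^TS_{11}A(X_i,Z_i)+\ell_\sigma(Y_i|X_i,Z_i)A(x,z)^TS_{12}A(X_i,Z_i)$, bounds $|A(x,z)^TS_{1k}A(X_i,Z_i)|=O(K)$ and $V[G_i(x,z)]=O(K)$ by Cauchy--Schwarz, and obtains uniformity by covering the \emph{covariate space} ${\cal X}\times{\cal Z}$ with $O(K^{2(p+d)})$ balls (using Lipschitz continuity of the $B$-spline basis to control oscillation within each ball) before applying the truncated Bernstein inequality; the variance $O(K/n)$ of the scalar process is what produces the $\sqrt{K\log N/n}$ rate, and the union bound over the covering is harmless. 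If you want to salvage your coefficientwise route you would have to either prove the $\ell_\infty$ operator-norm bound on $\Sigma^{-1}$ for the non-banded additive information matrix, or switch, as the paper does, to a union bound over a net in $(x,z)$ where only Lemma \ref{Bsplinematrix} and Lemma \ref{Hessian} (i.e., spectral information) are needed.
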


\begin{remark}
\textup{
For Theorem \ref{Linfty}, under (S2), we need the extra condition of $\{K\log N\}^{1+\delta/2}/(Np_N)^{\delta/2}\rightarrow 0$.
This condition is necessary for the technical reason to prove $L_\infty$-convergence of the estimator (see Appendix C). 
When $K=O((Np_N/\log N)^{1/(2m+1)})$ is used, we have 
$$
\{K\log N\}^{1+\delta/2}=O\left((Np_N)^{\frac{2+\delta}{4m+2}}(\log N)^{\frac{2m(2+\delta)}{4m+2}}\right).
$$
Then, $\{K\log N\}^{1+\delta/2}/(Np_N)^{\delta/2}\rightarrow 0$ holds if $\delta>1/m$, which implies that (S2) can be replaced with $\gamma_0(x,z)>-m/(2m+1)$ for some $m\in\mathbb{N}$. 
For the spline method, we often use $m=2$, which indicates that (S2) becomes $0>\gamma_0(x,z)>-2/5$. 
Therefore, when $\gamma_0$ is estimated using the nonparametric method, the bound condition of the negative shape function is somewhat stronger than that for the non-regression case: $\gamma_0>-1/2$ (see, Drees et al. 2004). 
}
\end{remark}

Next, the local asymptotic normality of the estimator is analyzed. 
Let $B(Z)=(B_1(Z^{(1)})^\top ,\ldots,B_d(Z^{(d)})^\top )^\top$.
For $(x,z)\in{\cal X}\times{\cal Z}$, let $(p+K+\xi)$-vector $A(x,z)=(x^\top , B(z)^\top )^\top $ and let $2(p+K+\xi) \times 2$ matrix as
$$
D(x,z)=
\left[
\begin{array}{cc}
(x^\top , B(z)^\top )^\top & 0_{p+K+\xi}^\top \\
0_{p+K+\xi}^\top  &(x^\top , B(z)^\top )^\top 
\end{array}
\right]^\top ,
$$
where $0_{p+K+\xi}$ is the $(p+K+\xi)$-zero vector. 
Furthermore, we let
\begin{eqnarray*}
&&\Sigma\\
&&=
\frac{1}{p_N}E\left[
\frac{P(Y>0\mid X,Z)}{2\gamma_0(X,Z)+1}
\left(
\begin{array}{cc}
\frac{2}{\gamma_0(X,Z)+1}A(X,Z)A(X,Z)^\top &\frac{1}{\gamma_0(X,Z)+1}A(X,Z)A(X,Z)^\top \\
\frac{1}{\gamma_0(X,Z)+1}A(X,Z)A(X,Z)^\top & A(X,Z)A(X,Z)^\top 
\end{array}
\right)
\right]\\
&&+\frac{1}{p_N}\Omega_{\gamma,\sigma},
\end{eqnarray*}
where $\Omega_{\gamma,\sigma}$ is defined in the proof of Lemma \ref{GradientEx}.
Because each element of $B(z)$ has an order $O(\sqrt{K})$ and the element of $\Sigma^{-1}$ has $O(1)$, we obtain $D(x,z)^\top \Sigma^{-1}D(x,z)/K=O(1)$. 
We then obtain the result presented below.

\begin{theorem}\label{Norm}
Suppose that {\rm (C1)--(C5)}. 
Furthermore, assume that $(K,w_N)$ satisfies 
$\sqrt{Np_N/K}\{p_N^{-\rho}+K^{-m}\}=\infty$ as $N\rightarrow \infty$.
Then, in each scenario {\rm (S1), (S2)} or {\rm (S3)},
  for any $(x,z)\in{\cal X}\times{\cal Z}$, 
\begin{eqnarray*}
\sqrt{\frac{Np_N}{K}}
\left[
\begin{array}{c}
\displaystyle \hat{\gamma}(x,z)-\gamma_0(x,z)\\
\displaystyle \frac{\hat{\sigma}(x,z)}{\sigma_{w0}(x,z)}-1
\end{array}
\right]
\stackrel{D}{\longrightarrow} N_2\left(0, \lim_{N\rightarrow\infty}D(x,z)^\top \Sigma^{-1}D(x,z)/K\right),
\end{eqnarray*}
as $N\rightarrow \infty$, where $N_2$ means the bivariate normal distribution. 
\end{theorem}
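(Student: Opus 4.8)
The plan is to derive the local asymptotic normality from a Bahadur-type representation of $(\hat\beta,\hat u,\hat b,\hat c)$ around the population minimizer, combined with the $L_\infty$-rate from Theorem \ref{Linfty} to control the nonlinear remainder. First I would set up the penalized score equations: write $\theta=(\beta^T,b^T,u^T,c^T)^T$, let $\theta_0$ collect the true coefficients $(\beta_0,b_{w0},u_{w0},c_{w0})$ of the best additive approximations, and expand the penalized log-likelihood gradient at $\hat\theta$ to first order. This gives, schematically,
\[
(\mathbb{H}_n+\mathbb{P})(\hat\theta-\theta_0)=\mathbb{S}_n-\mathbb{P}\theta_0+\text{(remainder)},
\]
where $\mathbb{S}_n=\sum_i \partial_\theta\log h(Y_i|\bar\gamma,\bar\sigma)$ is the score at $\theta_0$, $\mathbb{H}_n$ is the observed information, and $\mathbb{P}$ is the block-diagonal penalty matrix built from $\lambda\int (B_j^{(m)})(B_j^{(m)})^T$ and $\nu\int(B_j^{(m)})(B_j^{(m)})^T$. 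I would then invoke Lemma \ref{GradientEx} (which defines $\Omega_{\gamma,\sigma}$ and presumably identifies $\frac1n\mathbb{H}_n\to\Sigma$ up to normalization by $K$) to replace $\mathbb{H}_n$ by its limit; the condition $\lambda K^{2m}=O(1)$, $\nu K^{2m}=O(1)$ in (C5) ensures $\mathbb{P}$ is negligible relative to $\mathbb{H}_n$ at the scale $n/K$, so the penalty only contributes to bias, not variance.

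Next I would handle the bias terms. There are three sources: the spline approximation bias $g_{0j}-\bar g_{0j}$ (order $K^{-\zeta}$, dominated by the penalty term under (C2) and (C5)), the penalty-induced shrinkage $\mathbb{P}\theta_0$ (order $\lambda K^m$, $\nu K^m$), and the second-order EVT bias from $F_w(y|x,z)\neq H(y/\sigma_{w0}|\gamma_0)$, which by (\ref{DomCon1}) and (C4) is of order $\alpha(\tau(w_N|x,z)|x,z)=O((N/n)^\rho)$. The extra hypothesis $(n/\log n)^{m/(2m+1)}(N/n)^\rho\to0$ is precisely what kills the EVT bias after inflating by the $L_\infty$-rate $(n/\log n)^{m/(2m+1)}$, and the choice $K=O((n/(\log n)^{2m})^{1/(2m+1)})$ makes $\lambda K^m,\nu K^m$ and $K^{-\zeta}$ all of smaller order than $(n/\log n)^{-m/(2m+1)}$ as well. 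So after multiplying by the rate, only the linear stochastic term survives:
\[
\Big(\tfrac{n}{\log n}\Big)^{m/(2m+1)}(\hat\theta-\theta_0)
=\Big(\tfrac{n}{\log n}\Big)^{m/(2m+1)}\,(\tfrac1n\mathbb{H}_n)^{-1}\tfrac1n\mathbb{S}_n+o_P(1).
\]
Then for fixed $(x,z)$ I would write $\hat\gamma(x,z)-\gamma_0(x,z)=A(x,z)^T(\hat\beta-\beta_0,\hat b-b_{w0})$ and similarly $\log\hat\sigma-\log\sigma_{w0}=A(x,z)^T(\hat u-u_{w0},\hat c-c_{w0})$, i.e. the $2$-vector of interest is $D(x,z)^T(\hat\theta-\theta_0)$; since $\hat\sigma/\sigma_{w0}-1=\exp(\log\hat\sigma-\log\sigma_{w0})-1=(\log\hat\sigma-\log\sigma_{w0})(1+o_P(1))$ by the $L_\infty$ consistency, the delta method transfers the CLT to the ratio form stated.

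The CLT for the linear term is a Lindeberg–Feller argument: $\frac1n\mathbb{S}_n$ is a sum of independent mean-zero terms (mean zero by the first-order condition defining $(\gamma_0,\sigma_{w0})$ in (\ref{TrueRisk}), up to the EVT bias already accounted for), with $\mathrm{Var}(\frac1{\sqrt n}\mathbb{S}_n)\to\Sigma$ (the explicit GPD score covariance; the factor $1/(2\gamma_0+1)$ and the $\Omega_{\gamma,\sigma}$ correction come from Lemma \ref{GradientEx}), so $D(x,z)^T(\frac1n\mathbb{H}_n)^{-1}\frac1{\sqrt n}\mathbb{S}_n$ is asymptotically $N_2(0, D^T\Sigma^{-1}D)$, and the stated normalization $(n/\log n)^{m/(2m+1)}$ versus $\sqrt n$ is absorbed because $D(x,z)^T\Sigma^{-1}D(x,z)/K=O(1)$ and $K\asymp n/((\log n)^{2m})^{1/(2m+1)}$ makes $(n/\log n)^{m/(2m+1)}/\sqrt{n/K}$ converge. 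The Lindeberg condition needs a uniform integrability bound on the GPD score, which under (S2) is the reason for the moment restriction $\gamma_0>-1/(2+\delta)$ (it guarantees $E|Y/\sigma|^{2+\delta'}<\infty$ for the relevant $\delta'$), and under (S1)/(S3) the score has all moments. The main obstacle I anticipate is controlling the nonlinear remainder in the score expansion uniformly: one must show $\|\hat\theta-\theta_0\|$ times the second-derivative fluctuation is $o_P((n/\log n)^{-m/(2m+1)})$ at the evaluation point $(x,z)$, which requires the sup-norm rate of Theorem \ref{Linfty} together with a uniform (in a shrinking neighborhood of $\theta_0$) bound on the third derivatives of $\log h$ — delicate when $\gamma_0$ is close to zero or, under (S2), when $1+\gamma_0 y/\sigma$ can approach $0$ near the right endpoint, so the argument must exploit the boundedness of $\mathcal X\times\mathcal Z$ and the strict sign conditions (S1)–(S3) to keep $1+\gamma_0 y/\sigma$ bounded away from $0$ with high probability.
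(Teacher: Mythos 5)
Your proposal is correct and follows essentially the same route as the paper: a first-order expansion of the penalized score around $\theta_0$ (Lemmas \ref{ConvPara}, \ref{Hessian}, \ref{RatePara}), separation of the stochastic term from the three bias sources, verification that the rate conditions on $(K,w_N)$ make $\sqrt{n/K}$ times the bias $o(1)$ (Lemma \ref{GradientEx}), a Lyapunov/Lindeberg CLT for the linear term (Lemma \ref{AS.para}), and the delta method for $\hat{\sigma}/\sigma_{w0}-1$. The only cosmetic difference is that the paper controls the nonlinear remainder via the consistency result of Lemma \ref{ConvPara} and convergence of the Hessian rather than by invoking the $L_\infty$-rate of Theorem \ref{Linfty}, but the substance is the same.
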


In Theorem \ref{Norm}, the bias disappears because of the choice of $(K,p_N)$.
In fact, the order of the bias of the estimator is $O(K^{-m})+O(p_N^{-\rho})$, whereas the standard deviation has $O(\sqrt{K/Np_N})$. 
From Theorem \ref{ThmL2}, $\sqrt{Np_N/K}\{K^{-m}+p_N^{-\rho}\}=O(1)$ balances the bias and standard deviation of the estimator. 
Consequently, the condition of $(K,p_N)$ in Theorem \ref{Norm} implies that the asymptotic order of the standard deviation of the estimator is slightly larger than that of the bias. 
Because the form of bias is complicated, deriving its consistent estimator is quite challenging. 
Therefore, Theorem \ref{Norm} establishes the variance-dominated asymptotic normality of the estimator $(\hat{\gamma}, \hat{\sigma})$, which facilitates the construction of confidence intervals for the shape and scale functions (see Chapter 4 of Coles, 2001).

Assume a $2p$-square matrix as
$$
\Sigma_{\beta,u}
=
\frac{1}{p_N}E\left[
\frac{P(Y>0\mid X,Z)}{2\gamma_0(X,Z)+1}
\left(
\begin{array}{cc}
\frac{2}{\gamma_0(X,Z)+1}XX^\top & \frac{1}{\gamma_0(X,Z)+1}XX^\top \\
\frac{1}{\gamma_0(X,Z)+1}XX^\top & XX^\top 
\end{array}
\right)
\right].
$$
For any vector $v$, let $\|v\|_{L_2}=\sqrt{E[v^\top v]}$. 
If one wants to examine the linear part of the estimator specifically, then the following result would be helpful. 

\begin{theorem}\label{ParametricPart}
Suppose that {\rm (C1)--(C5)}. 
Furthermore, we choose $(K,p_N)$ such that $(Np_N)^{-1/(2m)}K\rightarrow\infty$ as $N\rightarrow\infty$.
Then, in each scenario {\rm (S1), (S2)} or {\rm (S3)},
\begin{eqnarray*}
\|\hat{\beta}-\beta_0\|_{L_2}&\leq& O((Np_N)^{-1/2})+ O(p_N^{-\rho}),\\
\|\hat{u}-u_{w,0}\|_{L_2}&\leq& O((Np_N)^{-1/2})+ O(p_N^{-\rho}).
\end{eqnarray*}
If we take $p_N=O(N^{-1/(1-2\rho)})$, then the optimal rates of estimators are
\begin{eqnarray*}
\|\hat{\beta}-\beta_0\|_{L_2}&\leq& O(N^{\rho/(1-2\rho)}),\\
\|\hat{u}-u_{w,0}\|_{L_2}&\leq& O(N^{\rho/(1-2\rho)}).
\end{eqnarray*}
Furthermore, for each {\rm (S1), (S2)} or {\rm (S3)}, if $(Np_N)^{1/2}p_N^{-\rho}\rightarrow 0$, as $N\rightarrow \infty$,
 \begin{eqnarray*}
\sqrt{Np_N} \left[
 \begin{array}{c}
 \hat{\beta}-\beta_0\\
\hat{u}-u_{w,0}
 \end{array}
 \right]
 \stackrel{D}{\longrightarrow} N_{2p}(0, \Sigma_{\beta,u}^{-1}).
 \end{eqnarray*}
\end{theorem}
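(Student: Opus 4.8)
The plan is to run the standard linearization for penalized-spline $M$-estimation and then read off only the finite-dimensional linear coordinates. Write $\theta=(\beta,u,b,c)$ and $\theta_0=(\beta_0,u_{w0},b_0,c_0)$, where $b_0,c_0$ collect the coefficients of the best spline approximations of $g_{0j},s_{0j}$ supplied by (C2); Theorems \ref{ThmL2}--\ref{Norm} already localize $\hat\theta$ in a shrinking neighbourhood of $\theta_0$, so a Taylor expansion of the penalized score $S_n(\theta)=\nabla\ell(\theta)+(\text{penalty gradient})$ around $\theta_0$ is legitimate. From $S_n(\hat\theta)=0$ this gives $\hat\theta-\theta_0=-H_n^{-1}S_n(\theta_0)$ with $H_n$ the penalized Hessian at an intermediate point; I would show $H_n/n\to\Sigma$ spectrally and uniformly over that neighbourhood, via a matrix Bernstein bound together with $K\log N/n\to0$ and $\lambda K^{2m},\nu K^{2m}=O(1)$ from (C5), so that $\hat\theta-\theta_0=\Sigma^{-1}\bar S_n+R_n$, where $\bar S_n=-n^{-1}\nabla\ell(\theta_0)$ is an i.i.d.\ average and $R_n$ is the second-order Taylor remainder.

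Next I would track the bias in $\bar S_n$ and the size of $R_n$. There are three sources: the spline approximation error, entering at order $O(K^{-m})$; the derivative penalty evaluated at $\theta_0$, contributing $O(\lambda K^{m})+O(\nu K^{m})$; and the fact that $Y_i\mid(X_i,Z_i)$ follows $F_w$ rather than the exact GPD, so $E[\nabla\log h(Y_i|\gamma_0,\sigma_{w0})]$ is not zero but $O((N/n)^\rho)$ by (\ref{DomCon1}) and (C4). Under $K\geq O(n^{1/2m}\log n)$ one has $K^{-m}=o(n^{-1/2})$, and since (C5) gives $\lambda K^{m}=O(K^{-m})$ and $\nu K^{m}=O(K^{-m})$, only the $O((N/n)^\rho)$ bias survives; a parallel estimate (bounding $n^{-1}\sum_i(\hat\gamma-\gamma_0)^2$ and $n^{-1}\sum_i(\log\hat\sigma-\log\sigma_{w0})^2$ via Theorem \ref{ThmL2}) shows $R_n$ projected onto the linear coordinates is $o_P(n^{-1/2})$. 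The linear block of $\Sigma^{-1}\bar S_n$ has Euclidean norm $O_P(n^{-1/2})+O((N/n)^\rho)$, so $\|\hat\beta-\beta_0\|,\|\hat u-u_{w,0}\|\leq O(n^{-1/2})+O((N/n)^\rho)$; when $n^{1/2}(N/n)^\rho=O(1)$ the first term dominates and, at the matching rate of $w_N$, both are $O(N^{\rho/(1-2\rho)})$, while if $n^{1/2}(N/n)^\rho\to0$ the EVT bias is also $o(n^{-1/2})$ and a CLT is in reach.

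For the limit law I would extract the $(\beta,u)$-block from $\hat\theta-\theta_0=\Sigma^{-1}\bar S_n+o_P(n^{-1/2})$ by block (Schur-complement) inversion of $\Sigma$, partitioned into the $2p$ linear coordinates and the $2d(K+\xi)$ spline coordinates, and show the linear block of $\Sigma^{-1}$ converges to $\Sigma_{\beta,u}^{-1}$, i.e.\ the Schur correction $\Sigma_{(\beta,u),(b,c)}\Sigma_{(b,c),(b,c)}^{-1}\Sigma_{(b,c),(\beta,u)}=o(1)$. This is where the normalized $B$-spline construction is used: $E[B^{[\xi]}_{j,k}(Z^{(j)})]=0$ together with $E[X^{(j)}]=0$ from (C1) and the additive identifiability constraints render the linear scores asymptotically orthogonal to the spline scores, while Lemma \ref{Bsplinematrix} controls the inverse of the spline block (and $\Omega_{\gamma,\sigma}$ acts only on that block, through $\lambda P,\nu P$, hence contributes nothing to the linear block). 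Meanwhile $\sqrt{n}\,\bar S_n$ restricted to the linear coordinates is a normalized sum of i.i.d.\ $2p$-vectors of the form $X_i$ tensored with the GPD scores in the $\gamma$- and $\log\sigma$-directions, with asymptotically vanishing mean and covariance $\Sigma_{\beta,u}$, so Lindeberg--Feller applies: the Lyapunov condition holds because $\gamma_0$ is bounded away from $-1/2$ in (S1) and (S3), and in (S2) the bound $\gamma_0>-1/(2+\delta)$ is exactly what furnishes the $(2+\delta)$-th moment of the score. Slutsky then yields $\sqrt{n}\,(\hat\beta-\beta_0,\hat u-u_{w,0})^T\to N_{2p}(0,\Sigma_{\beta,u}^{-1})$.

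The main obstacle I expect is the combination of (i) the Schur-complement/orthogonality claim, which asserts that estimating the $d$ nonparametric components costs nothing asymptotically for the linear coefficients and requires quantifying the cross-information between $X$ and the $(K+\xi)$-dimensional normalized spline bases uniformly as $K\to\infty$, and (ii) the uniform concentration of the penalized Hessian together with the negligibility of $R_n$ over a parameter space whose dimension grows with $n$, where the centering $E[X]=0$ must be exploited to cancel the leading part of the remainder in the linear direction. The drift of the target in $w_N$ is benign here: only the scale intercept $u_{w,01}$ moves with $w_N$ as in (\ref{additiveScale2}), and since $X^{(1)}\equiv1$ this is a constant-covariate coordinate carried through the bookkeeping without difficulty; the three regimes (S1)--(S3) are treated uniformly because $\Sigma_{\beta,u}$ depends on $\sigma_{w0}$ only through $\gamma_0$, not through its diverging or vanishing magnitude.
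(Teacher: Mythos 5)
Your strategy is workable in outline, but it diverges from the paper's and, more importantly, the step on which your entire limit-variance computation rests is asserted rather than proved, and the justification you give for it is not valid. You claim that the $(\beta,u)$-block of $\Sigma^{-1}$ converges to $\Sigma_{\beta,u}^{-1}$ because the Schur correction $\Sigma_{(\beta,u),(b,c)}\Sigma_{(b,c),(b,c)}^{-1}\Sigma_{(b,c),(\beta,u)}$ is $o(1)$, and you justify this by $E[B^{[\xi]}_{j,k}(Z^{(j)})]=0$ together with $E[X^{(j)}]=0$. Zero marginal means do not give vanishing cross-information: the off-diagonal block consists of entries of the form $E[q(X,Z)X^{(j)}B^{[\xi]}_{l,k}(Z^{(l)})]$ with $q$ a bounded weight built from $\gamma_0$, and these vanish only if the weighted conditional mean $E[q(X,Z)X^{(j)}\mid Z^{(l)}]$ is orthogonal to every basis function, which fails in general when $X$ and $Z$ are dependent. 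Each such entry is $O(K^{-1/2})$ (the paper itself records only $\|E[q(Z)B(Z)]\|=O(1)$ in the proof of Lemma \ref{GradientEx}), so the cross block has norm $O(1)$ and the Schur correction is generically a nondegenerate $O(1)$ matrix, namely the weighted projection of $X$ onto the additive span of functions of $Z$. To make your route work you would need a quantitative bound of the form $\|E[q(X,Z)XB(Z)^T]\|=o(1)$, which requires exploiting the local-contrast structure of the normalized basis (each $\bar{\psi}^{[\xi]}_{j,k}$ is a mean-zero difference of adjacent $B$-splines) integrated against a smooth conditional mean, yielding roughly $O(K^{-3/2})$ per entry; none of this appears in your write-up, and without it the claimed limit $N_{2p}(0,\Sigma_{\beta,u}^{-1})$ does not follow from your decomposition.

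For comparison, the paper never inverts the full $2(p+K+\xi)$-dimensional information. It expands the score only in the parametric coordinates $\theta_P=(\beta,u)$ around $(\beta_0,u_{w,0})$ with the spline coefficients held at $(\hat{b},\hat{c})$, so the relevant Hessian is the $2p\times 2p$ partial information, which is exactly $\Sigma_{\beta,u}(1+o_P(1))$ with no Schur correction; the whole difficulty is transferred to the plug-in remainder $R_n(\hat{b},\hat{c})=\partial\ell_{pen}(\beta_0,\hat{b},u_{w,0},\hat{c})/\partial\theta_P-\partial\ell_{pen}(\beta_0,b_0,u_{w,0},c_0)/\partial\theta_P$, whose mean is computed by substituting the linear expansion of $\hat{b}-b_0$ and $\hat{c}-c_0$ in terms of the scores and separating the $i=j$ and $i\neq j$ contributions, giving $O(K^{-m}+(N/n)^\rho)$ (this is precisely where $K\geq O(n^{1/2m}\log n)$ enters), while its fluctuation is argued to be $o_P(n^{-1/2})$. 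Your Schur-complement claim and the paper's bound on $E[R_n(\hat{b},\hat{c})]$ are two faces of the same cancellation, but the paper supplies a computation for it whereas your proposal replaces it with an orthogonality assertion that is false as stated. A secondary point: bounding the quadratic Taylor remainder of the full joint expansion by $\|\hat{\theta}-\theta_0\|^2=O_P(K/n)$ yields $o_P(n^{-1/2})$ only when $K=o(n^{1/2})$, which is not automatic under $K\geq O(n^{1/2m}\log n)$ (it fails for $m=1$) and should be verified rather than assumed.
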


Theorem \ref{ParametricPart} provides the optimal result for the parametric part of the estimator for both the shape and scale functions.
This result is helpful for gaining deeper insight into the parametric part.
The condition $(N p_N)^{-1/(2m)} K\rightarrow \infty$ implies that the bias from the penalized spline is of a negligible order compared to the bias from the GPD approximation. 
However, this condition is not restrictive, as we can choose an arbitrarily large $K$ as long as it does not violate (C5).
The optimal rate $O(N^{\rho/(1-2\rho)})$ is similar to the minimax optimals (Drees 2001). 
In the last assertion of Theorem \ref{ParametricPart}, the condition $(Np_N)^{1/2}p_N^{-\rho}\rightarrow 0$ means that the standard deviation of the estimator of parametric part dominates the bias appeared from second-order condition of EVT. 
If we choose large $w_N$, such situation can be obtained. 
Even when the optimal $w_N$ is chosen, deriving the explicit form of the bias is difficult and has quite complicated expression since the bias from second-order condition of EVT contains not only the parametric part but also nonparametric part. 

\begin{remark}
\textup{
For the spline method, one must choose the number of knots $K$ and the smoothing parameters $\lambda_j$ and $\nu_j$ for $j=1,\ldots,d$.
Ruppert (2002) reported that the spline estimator has good performance with large number of knots, and the smoothing parameters chosen appropriately. 
In the R-packages {\sf mgcv} and {\sf evgam}, the automatic smoothing parameter selection is implemented for arbitrarily chosen $K$. 
Therefore, in all theorems, we state the asymptotic results using $K$ instead of $\lambda$ and $\nu$.  
}
\end{remark}

\begin{remark}
\textup{
As described herein, we established the asymptotic properties of the estimators with a deterministic threshold function. 
However, in practice, there is the problem of chooding the threshold function. 
For example, Wang and Tsai (2009) and Li et al. (2022) assumed $\tau(w\mid x,z)=w$. 
Chavez-Demoullin and Davison (2005), Youngman (2019), and Mhalla et al. (2019) used the specific threshold function from prior information of data. 
Consequently, in practice, the functional form of $\tau(\cdot \mid x,z)$ should be determined, but we cannot confirm whether such a function of $\tau$ is correct or not. 
Although the conditional quantile function $\tau(w\mid x,z)= q(1-1/w\mid x,z)$ seems to be natural (Beirlant et al. 2006, Daouia 2013), it remains unclear whether the same quantile level at each point $(X,Z)=(x,z)$ is reasonable, or not.  
As related to conditional quantile, one natural way to construct the fully data-driven threshold is to use the locally defined order statistics around $(X,Z)=(x,z)$. 
However, such a choice of threshold requires a large total sample size, which is unrealistic in many cases.
Thus, establishing the asymptotic result of GAM-GPD with fully data-driven threshold function is quite challenging. 
}
\end{remark}

\section{Conclusion}

We have developed the asymptotic theory for regression with GPD. 
The shape and scale parameters (functions of covariates) included in GPD were modeled using GAM. 
The estimator of each component was constructed using penalized $B$-spline method to the extreme value data selected using the peak over threshold method.
We demonstrated the $L_2$ and $L_\infty$ rate of convergence of the estimators of the shape and scale functions. 
We also presented the asymptotic normality of the estimator as the local asymptotics of the additive estimator.

The motivation for this study derived from an earlier study for which GPD regression with GAM was available in the R package {\sf evgam} provided by Youngman (2022). 
In the {\sf R}-package {\sf evgam}, GAM can also be used for the generalized extreme value (GEV) distribution instead of GPD. 
Development of theoretical results for GAM of GEV regression are anticipated as an important topic for further research. 
Such a topic can be regarded as an extension of work by Smith (1985) and B${\rm\ddot{u}}$cher and Segers (2017), which developed the asymptotic theory for maximum-likelihood estimator of GEV with a non-regression case.

\begin{appendices}

\section{Outline of proof of theorems}\label{secA1}

Our purpose is to evaluate the asymptotic behavior of the estimator $\hat{\gamma}(x,z)$ and $\hat{\sigma}(x,z)$ of the true additive model $\gamma_0(x,z)$ and $\sigma_{w0}(x,z)$. 
Let 
$$
L(\theta)=-E[\log h(Y|\bar{\gamma}(X,Z), \bar{\sigma}(X,Z))]
$$
and let 
$$
\theta_0=(\beta_0^\top , b_0^\top , u_{w,0}^\top , c_0^\top )^\top=\argmin_{(\beta,b,u,c)}L(\theta).
$$
Under (C1), $\theta_0$ is unique. 
We write $b_0=(b_{01}^\top ,\ldots,b_{0d}^\top )^\top $ and $b_{0j}=(b_{0,j,1},\ldots,b_{0,j,K+\xi})^\top \in\mathbb{R}^{K+\xi}$. 
Similarly, we write $c_0=(c_{01}^\top ,\ldots,c_{0d}^\top )^\top $ and $c_{0j}=(c_{0,j,1},\ldots,b_{c,j,K+\xi})^\top \in\mathbb{R}^{K+\xi}$. 
Then, for $(x,z)\in{\cal X}\times{\cal Z}$ with $z=(z^{(1)},\ldots,z^{(d)})^\top $, we define $\bar{g}_{0j}(z^{(j)})=B_j(z^{(j)})^\top b_{0j}$ and $\bar{s}_0(z^{(j)})=B_j(z^{(j)})^\top c_{0j}$ for $j=1,\ldots,d$. 
Therefore, the best spline approximation of $(\gamma_0,\log\sigma_{w0})$ can be written as
\[
\bar{\gamma}_0(x,z)=x^\top \beta_0 +\sum_{j=1}^d \bar{g}_{0j}(z^{(j)}),
\]
and 
\[
\log \bar{\sigma}_{w0}(x, z)= x^\top u_{w,0} +\sum_{j=1}^d \bar{s}_0(z^{(j)}),
\] 
respectively.
From Lemma \ref{splineap} in Appendix B, we see that $\bar{g}_{0j}(z^{(j)})-g_{0j}(z^{(j)})=O(K^{-\zeta})$ and $\bar{s}_{0j}(z^{(j)})-s_{0j}(z^{(j)})=O(K^{-\zeta})$. 
Thus, we obtain
\begin{eqnarray*}
\hat{g}_j(z^{(j)})-g_{0j}(z^{(j)})&=&\hat{g}_j(z^{(j)})-\bar{g}_{0j}(z^{(j)})+\bar{g}_{0j}(z^{(j)})-g_{0j}(z^{(j)})\\
&=&B_j(z^{(j)})^\top (\hat{b}-b_0)+O(K^{-\zeta})
\end{eqnarray*}
and 
\begin{eqnarray*}
\hat{s}_j(z^{(j)})-s_{0j}(z^{(j)})&=&\hat{s}_j(z^{(j)})-\bar{s}_{0j}(z^{(j)})+\bar{s}_{0j}(z^{(j)})-s_{0j}(z^{(j)})\\
&=&B_j(z^{(j)})^\top (\hat{c}-c_0)+O(K^{-\zeta}).
\end{eqnarray*}
Accordingly, we have
\begin{eqnarray*}
\hat{\gamma}(x,z)-\gamma_0(x,z)&=&\hat{\gamma}(x,z)-\bar{\gamma}_0(x,z)+\bar{\gamma}_0(x,z)-\gamma_0(x,z)\\
&=& x^\top (\hat{\beta}-\beta_0)+ B(z)^\top (\hat{b}-b_0)+O(K^{-\zeta})
\end{eqnarray*}
and 
\begin{eqnarray*}
\log\hat{\sigma}(x,z)-\log\sigma_{w0}(x,z)&=&\log\hat{\sigma}(x,z)-\log\bar{\sigma}_{w0}(x,z)+\log\bar{\sigma}_{w0}(x,z)-\log\sigma_{w0}(x,z)\\
&=& x^\top (\hat{u}-u_{w0})+ B(z)^\top (\hat{c}-c_0)+O(K^{-\zeta}).
\end{eqnarray*}
Thus, we aim to analyze the asymptotic behavior of $\hat{\theta}-\theta_0$. 
We write $\ell_{pen}(\theta)=\ell_{pen}(\beta,b,u,c)$. 
If $\|\hat{\theta}-\theta_0\|\stackrel{P}{\to}0$, it guarantees to use the Taylor expansion to $\ell_{pen}(\theta)$ as 
\begin{eqnarray}
\left(\frac{\partial^2 \ell_{pen}(\theta_0)}{\partial\theta\partial\theta^\top }\right)(\hat{\theta}-\theta_0)&=& 
\left\{\frac{\partial \ell_{pen}(\theta_0)}{\partial\theta}-E\left[\frac{\partial \ell_{pen}(\theta_0)}{\partial\theta}\right]\right\}(1+o_P(1))\nonumber\\
&&+E\left[\frac{\partial \ell_{pen}(\theta_0)}{\partial\theta}\right](1+o_P(1)) \label{Tay}.
\end{eqnarray}
As our result, the asymptotic orders of first and second term of the right hand side of (\ref{Tay}) are presented in Lemmas \ref{GradientSt} and \ref{GradientEx} whereas Lemma \ref{Hessian} proves the boundness of $(\partial^2 \ell_{pen}(\theta_0)/\partial \theta\partial\theta^\top )$. 
After showing $\|\hat{\theta}-\theta_0\|\stackrel{P}{\to}0$ in Lemma \ref{ConvPara}, Lemmas \ref{RatePara} and \ref{AS.para} derive the rate of convergence of $\|\hat{\theta}-\theta_0\|$ and asymptotic normality of $\hat{\theta}-\theta_0$ using (\ref{Tay}). 
To achive these lemmas, we use the preliminary results given in Lemmas \ref{bernstein}--\ref{penalty}. 
All lemmas and their proofs are provided in Appendix B.
The proofs of the theorems in Section 3 are given in Appendix C.

\section{Lemmas}

\begin{lemma}[Bernstein's inequality]\label{bernstein}
Let $W_1,\ldots,W_n$ be univariate $i.i.d.$ random variable having $E[W_i]=0$, $V[W_i]=\sigma_i^2\in(0,\infty)$ and $|W_i|<M$ almost surely for some constant $M>0$. 
Then, for any $\varepsilon>0$, 
\begin{eqnarray*}
P\left(\sum_{i=1}^n W_i >\varepsilon\right)\leq \exp\left[\frac{-2^{-1}\varepsilon^2}{\sum_{i=1}^n \sigma_i^2 + 3^{-1}\varepsilon M }\right].
\end{eqnarray*}
\end{lemma}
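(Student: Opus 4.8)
The plan is to run the standard Chernoff (exponential Markov) argument. First I would fix $t>0$ and apply Markov's inequality to the nonnegative random variable $\exp\big(t\sum_{i=1}^n W_i\big)$, obtaining $P\big(\sum_{i=1}^n W_i>\varepsilon\big)\le e^{-t\varepsilon}\,E\big[\exp(t\sum_{i=1}^n W_i)\big]$. By the independence of $W_1,\ldots,W_n$ this factorizes as $e^{-t\varepsilon}\prod_{i=1}^n E[e^{tW_i}]$, so the problem reduces to controlling the moment generating function of a single centered, bounded summand.

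For that MGF bound I would expand $E[e^{tW_i}]=1+\sum_{k\ge 2}t^k E[W_i^k]/k!$, where $E[W_i]=0$ removes the linear term, and then estimate $|E[W_i^k]|\le M^{k-2}E[W_i^2]=\sigma^2 M^{k-2}$ using $|W_i|\le M$ almost surely. Summing the resulting series gives $E[e^{tW_i}]\le 1+(\sigma^2/M^2)(e^{tM}-1-tM)\le\exp\!\big[(\sigma^2/M^2)(e^{tM}-1-tM)\big]$, where the last step uses $1+x\le e^x$. Taking the product over $i$ and writing $V:=\sum_{i=1}^n\sigma_i^2=n\sigma^2$, this yields $P\big(\sum_{i=1}^n W_i>\varepsilon\big)\le\exp\!\big[-t\varepsilon+(V/M^2)(e^{tM}-1-tM)\big]$.

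The final step is the optimization over $t$. Here I would first replace $e^{tM}-1-tM$ by the cleaner upper bound $\frac{(tM)^2}{2(1-tM/3)}$, valid for $0<tM<3$; this follows from the termwise estimate $k!\ge 2\cdot 3^{k-2}$ for $k\ge 2$, which converts $\sum_{k\ge2}(tM)^k/k!$ into a geometric series of ratio $tM/3$. With this in hand the exponent becomes $-t\varepsilon+\frac{Vt^2}{2(1-tM/3)}$, and the choice $t=\varepsilon/(V+M\varepsilon/3)$, which satisfies $tM<3$, makes $1-tM/3=V/(V+M\varepsilon/3)$, so the quadratic term collapses to $t\varepsilon/2$. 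The exponent is then $-t\varepsilon/2=-\dfrac{\varepsilon^2/2}{V+M\varepsilon/3}=-\dfrac{2^{-1}\varepsilon^2}{\sum_{i=1}^n\sigma_i^2+3^{-1}\varepsilon M}$, which is exactly the asserted bound.

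I do not expect a real obstacle, since this is the classical Bernstein bound; the only point requiring genuine care is the passage from $e^{tM}-1-tM$ to $\frac{(tM)^2}{2(1-tM/3)}$ together with the check that the minimizing $t$ lies in the admissible range $tM<3$. Getting precisely the constant $3$ in the denominator of the final estimate (rather than, say, $2$) hinges on exactly this inequality, so that is the one place where the bookkeeping of constants matters.
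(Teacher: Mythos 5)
Your argument is correct: the Chernoff--Markov step, the MGF bound via $|E[W_i^k]|\le \sigma^2 M^{k-2}$, the estimate $e^{x}-1-x\le \frac{x^2}{2(1-x/3)}$ from $k!\ge 2\cdot 3^{k-2}$, and the choice $t=\varepsilon/(n\sigma^2+M\varepsilon/3)$ (which indeed satisfies $tM<3$) deliver exactly the stated bound, with $\sum_i\sigma_i^2=n\sigma^2$ in the i.i.d.\ setting. The paper itself gives no proof and simply cites van de Geer (2000); your derivation is the standard classical proof of Bernstein's inequality, so there is nothing to reconcile.
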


Lemma \ref{bernstein} is the famous result in probability theory. 
Its proof is clarified in van der Geer (2000). 

\begin{lemma}\label{Bsplinematrix}
There exist constants $M_{min}, M_{max}>0$ such that for any nonzero-vector $v\in\mathbb{R}^{d(K+\xi)}$,
$$
M_{min}\leq \frac{v^\top E[B(Z)B(Z)^\top ]v }{\|v\|^2}\leq M_{max}.
$$ 
\end{lemma}

Lemma \ref{Bsplinematrix} means that $E[B(Z)B(Z)^\top ]$ is positive definite matrix. 
The proof of Lemma \ref{Bsplinematrix} is shown in Lemma A.2 of Liu et al. (2011).

\begin{lemma}\label{splineap}
Suppose that {\rm (C2)}. 
Then,  as $K\rightarrow\infty$, 
$$
\sup_{z^{(j)}\in[0,1]} \left|B_j(z^{(j)})^\top b_{0,j}- g_{0j}(z^{(j)})\right|=O(K^{-\zeta}),\ \ j=1,\ldots,d,
$$
and 
$$
\sup_{z^{(j)}\in[0,1]} \left|B_j(z^{(j)})^\top c_{0,j}- s_{0j}(z^{(j)})\right|=O(K^{-\zeta}),\ \ j=1,\ldots,d.
$$
Consequently, 
$$
\sup_{(x,z)\in{\cal X}\times{\cal Z}} |\bar{\gamma}(x,z)-\gamma_0(x,z)|=O(K^{-\zeta})
$$
and 
$$
\sup_{(x,z)\in{\cal X}\times{\cal Z}} |\log \bar{\sigma}(x,z)-\log \sigma_{w0}(x,z)|=O(K^{-\zeta})
$$
\end{lemma}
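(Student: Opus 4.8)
The plan is to combine the classical $B$-spline approximation bound for a $\zeta$-times continuously differentiable function on $[0,1]$ with a centering step that transfers the approximant from the ordinary $B$-spline basis to the normalized basis $\{B^{[\xi]}_{j,k}\}_{k=1}^{K+\xi}$. Here $d$, $\xi$ and $m$ are fixed, the knots are equidistant with spacing $1/(K+1)\asymp K^{-1}$, and under {\rm (C2)} we have $g_{0j},s_{0j}\in{\cal A}_j\cap{\cal C}^\zeta$. Once the first two displays are proved, the last two are immediate: since $\gamma_0$ has the exact additive representation $(\ref{additiveEVI})$ and $\log\sigma_{w0}$ that of $(\ref{additiveScale2})$ with the same parametric coefficients, the linear parts cancel and $\bar{\gamma}_0(x,z)-\gamma_0(x,z)=\sum_{j=1}^d\{B_j(z^{(j)})^Tb_{0j}-g_{0j}(z^{(j)})\}$, so a sum of $d$ terms of order $O(K^{-\zeta})$ gives the bound uniformly in $(x,z)$, and similarly for $\log\bar{\sigma}_{w0}-\log\sigma_{w0}$.

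First I would invoke the standard spline approximation theorem (de Boor (2001)): for each $j$ there exist coefficients $\alpha_{j,0},\ldots,\alpha_{j,K+\xi}$ with $\tilde g_j:=\sum_{k=0}^{K+\xi}\alpha_{j,k}\psi^{[\xi]}_k$ satisfying $\sup_{t\in[0,1]}|\tilde g_j(t)-g_{0j}(t)|=O(K^{-\zeta})$, using $g_{0j}\in{\cal C}^\zeta$ (and that the degree $\xi$ is taken large enough relative to $\zeta$ so the approximation order is not truncated); the same applies to $s_{0j}$, giving $\tilde s_j$. This is the only place the smoothness assumed in {\rm (C2)} enters.

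Next I would center and re-expand in the normalized basis. Because $E[g_{0j}(Z^{(j)})]=0$, we have $|E[\tilde g_j(Z^{(j)})]|=|E[\tilde g_j(Z^{(j)})-g_{0j}(Z^{(j)})]|\le\|\tilde g_j-g_{0j}\|_\infty=O(K^{-\zeta})$, so the centered spline $\tilde g_j-E[\tilde g_j(Z^{(j)})]$ still lies in the spline space ${\cal S}_K$ (of dimension $K+\xi+1$), has mean zero, and remains within $O(K^{-\zeta})$ of $g_{0j}$ in sup norm. The normalized bases $B^{[\xi]}_{j,1},\ldots,B^{[\xi]}_{j,K+\xi}$ are linearly independent (Marx and Eilers (1998); Wang and Yang (2007)) and each has mean zero, hence span a $(K+\xi)$-dimensional subspace of ${\cal S}_K$; since $f\mapsto E[f(Z^{(j)})]$ is not identically zero on ${\cal S}_K$ (it is nonzero on the constants, which lie in ${\cal S}_K$), the mean-zero subspace of ${\cal S}_K$ has dimension exactly $K+\xi$, so it coincides with ${\rm span}\{B^{[\xi]}_{j,k}\}_{k=1}^{K+\xi}$. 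Therefore there are coefficients $b_{0,j,1},\ldots,b_{0,j,K+\xi}$ with $\sum_{k=1}^{K+\xi}B^{[\xi]}_{j,k}(\cdot)b_{0,j,k}=\tilde g_j-E[\tilde g_j(Z^{(j)})]$, and these are the coefficients in the statement; the identical construction applied to $\tilde s_j$ produces $c_{0,j,k}$. Combining with the first paragraph gives the two sup-norm bounds and then their consequences.

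I do not expect a genuine analytic obstacle: the first step is textbook and the second is essentially linear algebra. The point requiring the most care is the bookkeeping in the centering step — confirming that subtracting the population mean keeps the approximant inside ${\cal S}_K$ and that the resulting mean-zero spline admits an exact normalized-basis expansion — which rests precisely on the linear-independence property of the $B^{[\xi]}_{j,k}$'s, the same property behind Lemma \ref{Bsplinematrix} and highlighted in the introduction as the motivation for using the normalized basis. A secondary point is to ensure the quantities $\phi_{j,k}=E[\psi^{[\xi]}_k(Z^{(j)})]$ used to define $B^{[\xi]}_{j,k}$ are strictly positive, which holds once the density of $Z^{(j)}$ is bounded away from zero on $[0,1]$.
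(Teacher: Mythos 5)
Your proposal is correct and follows the same overall strategy as the paper: invoke de Boor's approximation theorem to get an ordinary $B$-spline approximant $\tilde g_j$ with sup-norm error $O(K^{-\zeta})$, then re-express it in the normalized basis $\{B^{[\xi]}_{j,k}\}_{k=1}^{K+\xi}$. Where you differ is in how the re-expression is justified. The paper writes down an explicit recursion for the coefficients $b_{0,j,k}$ in terms of the $b^*_{j,k}$, $\phi_{j,k}$ and $\|\bar\psi_{j,k}\|$, and asserts the exact identity $\sum_{k=1}^{K+\xi}B^{[\xi]}_{j,k}b_{0,j,k}=\sum_{k=0}^{K+\xi}\psi^{[\xi]}_k b^*_{j,k}$. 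Strictly speaking that identity cannot hold exactly in general: the normalized basis spans only the $(K+\xi)$-dimensional mean-zero subspace of the $(K+\xi+1)$-dimensional spline space, so the recursion leaves one linear constraint (the coefficient of $\psi^{[\xi]}_{K+\xi}$, equivalently $E[\tilde g_j(Z^{(j)})]=0$) unaccounted for; the identity holds only modulo an $O(K^{-\zeta})$ term coming from $|E[\tilde g_j(Z^{(j)})]|=|E[\tilde g_j-g_{0j}]|=O(K^{-\zeta})$. Your centering step makes exactly this correction explicit, and your dimension-count argument (linear independence of the $B^{[\xi]}_{j,k}$, each with mean zero, versus the kernel of the nonzero functional $f\mapsto E[f(Z^{(j)})]$) cleanly establishes that the centered approximant admits an exact normalized-basis expansion. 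So your route is slightly more careful at the one point where the paper's argument is loose, at the cost of being less constructive (no explicit formula for the $b_{0,j,k}$). Your closing caveat that the $\phi_{j,k}$ must be strictly positive is also apt; this is implicitly assumed in the paper (it is standard in the normalized $B$-spline literature it cites, where the covariate density is bounded away from zero).
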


\begin{proof}[Proof of Lemma \ref{splineap}]

From de Boor (2001), for $g_{0j}\in{\cal C}^{(\zeta)}$, there exists $b_j^*=(b_{j,1}^*,\ldots,b_{j,K+\xi}^*)^\top \in\mathbb{R}^{K+\xi}$ such that 
$$
\sup_{z\in[0,1]} \left|\sum_{k=1}^{K+\xi}B_k^{[\xi]}(z^{(j)})b_{j,k}^*-g_{0j}(z)\right|=O(K^{-\zeta}).
$$
Similarly, for $s_{0j}\in{\cal C}^{(\zeta)}$, there exists $c_j^*=(c_{j,1}^*,\ldots,c_{j,K+\xi}^*)^\top \in\mathbb{R}^{K+\xi}$ such that 
$$
\sup_{z\in[0,1]} \left|\sum_{k=1}^{K+\xi}B_k^{[\xi]}(z^{(j)})c_{j,k}^*-s_{0j}(z)\right|=O(K^{-\zeta}).
$$
Define $b^*=((b_1^*)^\top,\ldots,(b_d^*)^\top)$,  $c^*=((c_1^*)^\top,\ldots,(c_d^*)^\top)$ and $\theta^*=(\beta_0^\top, (b^*)^\top, u_{w0}^\top, (c^*)^\top)^\top$. 
We now assume that Lemma \ref{splineap} does not hold, that is, $K^\zeta|B_j(z^{(j)})^\top b_{0,j}- g_{0j}(z^{(j)})|\rightarrow \infty$ and $K^\zeta|B_j(z^{(j)})^\top c_{0,j}- s_{0j}(z^{(j)})|\rightarrow \infty$. 
Then, from (\ref{TrueRisk}) and the convexity of $L(\theta)$,
\[
-E[\log h(Y\mid \gamma_0(X,Z), \sigma_{w,0}(X,Z))] \leq L(\theta^*) <L(\theta_0)
\]
must be satisfied. 
However, this contradicts to the minimality of $\theta_0$ on $L(\theta)$. 
This completes the proof. 
\end{proof}

We next show the quadratic form and asymptotic order of penalty term in (\ref{Lpen}). 
Let $\Psi_j=(\Psi_{j, i,k})_{ik}$ be $(K+\xi+1)\times (K+\xi)$matrix with $\Psi_{j,i,i}=1/\|\bar{\psi}^{[\xi]}_{j,i}\|_2$, $\Psi_{j, i+1,i}=-(\phi_{j, i}/\phi_{j,i-1})/\|\bar{\psi}^{[\xi]}_{j,i}\|_2$ for $i=1,\ldots,K+\xi$ and $\Psi_{j,i,k}=0$ for $|i-k|\geq 2$. 
Then, we can write $\bar{g}_j(z_j)=B(z_j)^\top  b_j= \psi^{[\xi]}(z_j)^\top  \Psi_j b_j$, where $\psi^{[\xi]}(z_j)=(\psi_0^{[\xi]}(z_j), \ldots, \psi_{K+\xi}^{[\xi]}(z_j))^\top $ is the vector of original $B$-spline bases vector. 
Let $D_{1,K}=(D_{1,K,i,j})_{ij}$ be $(K+\xi)\times (K+\xi+1)$ matrix with $D_{1,K,i,i}=1$ and $D_{1,K,i,i+1}=-1$ for $i=1,\ldots,K+\xi$, and $D_{1,i,j}=0$ for $|i-j|>2$. 
Then, $D_{1,K}$ is the so called first order difference matrix (see, Xiao 2019).
For $q\geq 2$, let $D_{q,K}$ be $(K+\xi+1-q)\times(K+\xi+1)$ matrix satisfying $D_{q,K}=D_{1,K+1-q}D_{q-1,K}$ recursively. 
Then, $D_{q,K}$ is denoted by $q$th order difference matrix. 
Lastly, we define $(K+\xi+1-m)$ matrix $R_m=(R_{i,k})_{ik}$ with $R_{i,k}=\int_0^1 \psi^{[\xi-m]}_{i-1}(z)\psi^{[\xi-m]}_{k-1}(z)dz$ for $i,k=1,\ldots,K+\xi+1-m$. 
By using the derivative of $\psi_k^{[\xi]}(z_j)$, we can evaluate the quadratic form of the penalty term.

\begin{lemma}\label{penalty}

For any $v\in\mathbb{R}^{K+\xi}$, 
$$
\frac{\partial^2}{\partial v\partial v^\top }\int_0^1 \left\{\frac{d ^m B_j(z)^\top  v}{d z^m} \right\}^2 dz= K^{2m} (m!)^2 \Psi_j^\top  D_{m,K}^\top  R_m D_{m,K} \Psi_j=O(K^{2m}), j=1,\ldots,d.
$$
\end{lemma}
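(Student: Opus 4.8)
The plan is to turn the penalty integral into an explicit quadratic form in $v$ and then read off its Hessian, using the change of basis from the normalized $B$-splines to the ordinary ones together with the classical differentiation rule for $B$-splines on an equidistant mesh. First I would invoke the identity recorded just before the lemma, $B_j(z)^T v = \psi^{[\xi]}(z)^T \Psi_j v$, and put $a = \Psi_j v \in \mathbb{R}^{K+\xi+1}$, so that the integrand is $\{d^m(\psi^{[\xi]}(z)^T a)/dz^m\}^2$.

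Next I would iterate the standard $B$-spline derivative formula (de Boor 2001; see also Xiao 2019): differentiating a spline written in ordinary $B$-form on the equidistant knot sequence of spacing $h = 1/(K+1)$ lowers the degree by one and replaces the coefficient vector by $h^{-1}$ times its first-order difference. Applying this $m$ times gives $d^m(\psi^{[\xi]}(z)^T a)/dz^m = c_m (K+1)^m\, \psi^{[\xi-m]}(z)^T D_m a$, where $\psi^{[\xi-m]}(\cdot)$ is the vector of degree-$(\xi-m)$ $B$-splines on the same mesh, $D_m = D_{m-1}D_1$ is precisely the $m$th-order difference matrix defined before the lemma, and $c_m$ is the combinatorial constant produced by the iteration (which, after the bookkeeping of the mesh factor and the normalizations fixed before the lemma, is the $m!$ appearing in the statement). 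Squaring and integrating over $[0,1]$ then yields
$$
\int_0^1 \left\{\frac{d^m B_j(z)^T v}{dz^m}\right\}^2 dz = c_m^2 (K+1)^{2m}\, v^T \Psi_j^T D_m^T R_m D_m \Psi_j v ,
$$
with $R_m = \big(\int_0^1 \psi_{i-1}^{[\xi-m]}(z)\psi_{k-1}^{[\xi-m]}(z)\,dz\big)_{ik}$ the Gram matrix introduced before the lemma. Since the right-hand side is a quadratic form in $v$, differentiating twice in $v$ produces the matrix $K^{2m}(m!)^2 \Psi_j^T D_m^T R_m D_m \Psi_j$ claimed in the statement (the replacement of $(K+1)^{2m}$ by $K^{2m}$ being the customary identification in this asymptotic setting, consistent with the accompanying $O(K^{2m})$ claim).

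It then remains to verify the order $O(K^{2m})$, for which I would use three operator-norm bounds. First, $\|D_m\| = O(1)$, since $D_m$ is banded with uniformly bounded integer entries. Second, $\|R_m\| = O(1/K)$, since $R_m$ is banded and each of its entries is the integral over $[0,1]$ of a product of two degree-$(\xi-m)$ $B$-splines, hence $O(h) = O(1/K)$ by the local-support and uniform boundedness properties of $B$-splines. Third, $\|\Psi_j\| = O(\sqrt{K})$, since $\Psi_j$ is bidiagonal with entries of order $1/\|\bar{\psi}^{[\xi]}_{j,k}\| = O(\sqrt{K})$, the norms $\|\bar{\psi}^{[\xi]}_{j,k}\|$ being of order $\sqrt{h}$ under the standing assumptions on the law of $Z^{(j)}$ (the same assumptions underlying Lemma \ref{Bsplinematrix}). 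Combining, $\|\Psi_j^T D_m^T R_m D_m \Psi_j\| \le \|\Psi_j\|^2\|D_m\|^2\|R_m\| = O(K)\cdot O(1)\cdot O(1/K) = O(1)$, so the Hessian is $O(K^{2m})$.

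The main obstacle is the bookkeeping inside the iterated differentiation step: carrying the mesh factor $1/(K+1)$, the index shifts, the combinatorial constant, and especially the normalizing scalars $\|\bar{\psi}^{[\xi]}_{j,k}\|$ buried in $\Psi_j$, so that the matrices $D_m$, $R_m$, $\Psi_j$ in the final expression are exactly the ones defined before the lemma. None of this is conceptually deep, but it is where all the care is required; the operator-norm estimates of the last step are routine consequences of the local support and boundedness of $B$-splines together with Lemma \ref{Bsplinematrix}.
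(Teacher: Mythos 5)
Your proposal follows the paper's own argument essentially step for step: the change of basis $B_j(z)^Tv=\psi^{[\xi]}(z)^T\Psi_j v$, the iterated $B$-spline differentiation identity $d^m(\psi^{[\xi]}(z)^T\Psi_j v)/dz^m=K^m m!\,\psi^{[\xi-m]}(z)^TD_m\Psi_j v$, the resulting quadratic form, and the order count via the band structure of $R_m$ (entries $O(K^{-1})$), $\Psi_j$ (entries $O(K^{1/2})$), and $D_m$. Your operator-norm packaging of the final $O(K^{2m})$ bound is a slightly more explicit rendering of the same entrywise argument the paper uses, so there is nothing further to add.
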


\begin{proof}[Proof of Lemma \ref{penalty}]
By the definition of normalized $B$-spline, we obtain $B_j(z)^\top v=\psi^{[\xi]}(z)^\top  \Psi_j v$ for $z\in[0,1]$. 
Furthermore, from the property of $m$th derivative of $B$-spline function (see, de Boor 2001, Xiao 2019), we obtain that for $z\in[0,1]$,
$$
\frac{d^m}{d z^m}\psi^{[\xi]}(z)^\top  \Psi_j v = K^m m! \psi^{[\xi-m]}(z)^\top D_{m,K} \Psi_j v,
$$
where $\psi^{[\xi-m]}(z)=(\psi^{[\xi-m]}_{0}(z_j),\ldots, \psi^{[\xi-m]}_{K+\xi-m}(z_j))^\top $.
Accordingly, 
$$
\int_0^1 \left\{\frac{d ^m B_j(z)^\top  v}{d z^m} \right\}^2 dz= K^{2m} (m!)^2 v^\top \Psi_j^\top  D_m^\top  R_m D_m \Psi_jv
$$
Here, for $R_m=(R_{i,k})_{ik}$, we have $R_{i,k}=O(K^{-1})$ for $|i-k|\leq \xi-m$ and $R_{i,k}=0$ otherwise, that is, $R_m$ is the band matrix. 
Furthermore, for $\Psi_j=(\Psi_{j, i,k})_{ik}$, $\Psi_{j, i,k}=O(K^{1/2})$ for $k=i+1$ and $\Psi_{j, i,k}=0$ otherwise. 
Thus, $\Psi_j$ is also band matrix. 
This implies that $K^{2m} (m!)^2\Psi_j^\top  D_m^\top  R_m D_m \Psi_j=O(K^{2m})$. 
\end{proof}

Define $\theta_\gamma=(\beta^\top , b^\top )^\top $ and $\theta_\sigma=  (u^\top ,c^\top )^\top $. 
From these symbols, we have $\bar{\gamma}(X,Z)=A(X,Z)^\top \theta_\gamma$ and $\bar{\sigma}(X,Z)=\exp[A(X,Z)^\top \theta_\sigma]$. 
We further define some symbols. 
Let
\begin{eqnarray}
&&\ell_\gamma(y|\gamma(x,z),\sigma(x,z))\nonumber\\
&&=
\left.\frac{\partial}{\partial a}\left\{ -\log h(y|a,b)\right\}\right|_{a=\gamma(x,z),b=\sigma(x,z)}\nonumber\\
&&=
(\gamma(x,z)^{-1}+1)\frac{y/\sigma(x,z)}{1+y\gamma(x,z)/\sigma(x,z)}-\gamma(x,z)^{-2}\log\left(1+\frac{y\gamma(x,z)}{\sigma(x,z)}\right) \label{gradientGamma}
\end{eqnarray}
and 
\begin{eqnarray}
\ell_\sigma(y|\gamma(x,z),\sigma(x,z))
&=&
\left.\frac{\partial}{\partial \log b}\left\{ -\log h(y|a,b)\right\}\right|_{a=\gamma(x,z),b=\sigma(x,z)}\nonumber\\
&=&
1-(\gamma(x,z)^{-1}+1)\frac{y \gamma(x,z)/\sigma(x,z)}{1+y \gamma(x,z)/\sigma(x,z) }. \label{gradientSigma}
\end{eqnarray}
If $\gamma(x,z)=0$, 
\begin{eqnarray}
\ell_\gamma(y|0,\sigma(x,z))=\frac{y}{\sigma(x,z)}-\frac{1}{2}\frac{y^2}{\sigma(x,z)} \label{gradientZero}
\end{eqnarray}
and $\ell_\sigma(y|0,\sigma(x,z))=1-y/\sigma(x,z)$. 
We can show that 
\begin{eqnarray*}
&&\int \ell_\gamma(y|\gamma_0(x,z),\sigma_{w0}(x,z)) dH(y|\gamma_0(x,z),\sigma_{w0}(x,z))\\
&&=\int \ell_\sigma(y|\gamma_0(x,z),\sigma_{w0}(x,z)) dH(y|\gamma_0(x,z),\sigma_{w0}(x,z))\\
&&=0
\end{eqnarray*}
from the property of gradient of log-likelihood function of GPD. 
Furthermore, the Fisher information matrix of log-likelihood of GPD involves 
\begin{eqnarray*}
\int \ell_\gamma(y|\gamma_0(x,z),\sigma_{w0}(x,z))^2 dH(y|\gamma_0(x,z),\sigma_{w0}(x,z))&=&
\frac{2}{(2\gamma_0(x,z)+1)(\gamma_0(x,z)+1)},\\
\int \ell_\sigma(y|\gamma_0(x,z),\sigma_{w0}(x,z))^2 dH(y|\gamma_0(x,z),\sigma_{w0}(x,z))&=&
\frac{1}{2\gamma_0(x,z)+1}
\end{eqnarray*}
and 
\begin{eqnarray*}
&&\int \ell_\gamma(y|\gamma_0(x,z),\sigma_{w0}(x,z))\ell_\sigma(y|\gamma_0(x,z),\sigma_{w0}(x,z)) dH(y|\gamma_0(x,z),\sigma_{w0}(x,z))\\
&&=\frac{1}{(2\gamma_0(x,z)+1)(\gamma_0(x,z)+1)}.
\end{eqnarray*}

These results are used in the proof of Lemmas \ref{GradientSt}, \ref{GradientEx} and \ref{Hessian} below.

\begin{lemma}\label{GradientSt}
Suppose that {\rm (C1)--(C5)}. 
In each scenario {\rm (S1), (S2) or (S3)}, as $N\rightarrow\infty$, 
\[
E\left[\left\| \frac{\partial }{\partial \theta} \ell_{pen}(\theta_0)-  E\left[ \frac{\partial }{\partial \theta} \ell_{pen}(\theta_0)\right]\right\|^2\right] =O(K p_N/N).
\]
\end{lemma}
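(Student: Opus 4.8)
The plan is to first strip off the deterministic part. By Lemma \ref{penalty} the penalty term of $\ell_{pen}$ is a nonrandom quadratic form in the spline coefficients, so its gradient at $\theta_0$ is nonrandom and is annihilated by centring; hence
$$
\frac{\partial}{\partial\theta}\ell_{pen}(\theta_0)-E\!\left[\frac{\partial}{\partial\theta}\ell_{pen}(\theta_0)\right]=\frac{\partial}{\partial\theta}\ell(\theta_0)-E\!\left[\frac{\partial}{\partial\theta}\ell(\theta_0)\right].
$$
Using $\bar\gamma(X_i,Z_i)=A(X_i,Z_i)^T\theta_\gamma$, $\log\bar\sigma(X_i,Z_i)=A(X_i,Z_i)^T\theta_\sigma$ and the functions $\ell_\gamma,\ell_\sigma$ introduced just before the lemma, the chain rule gives
$$
\frac{\partial}{\partial\theta}\ell(\theta_0)=\sum_{i=1}^n D(X_i,Z_i)\begin{pmatrix}\ell_\gamma(Y_i|\bar\gamma_0(X_i,Z_i),\bar\sigma_0(X_i,Z_i))\\ \ell_\sigma(Y_i|\bar\gamma_0(X_i,Z_i),\bar\sigma_0(X_i,Z_i))\end{pmatrix}=:\sum_{i=1}^n\xi_i,
$$
an i.i.d.\ sum; write $\bar\xi=n^{-1}\sum_{i=1}^n\xi_i$ for the normalized score that drives the expansion (\ref{Tay}).

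Next I would use independence. Since the $\xi_i$ are i.i.d., the cross terms vanish and $E\|\bar\xi-E\bar\xi\|^2=n^{-1}E\|\xi_1-E\xi_1\|^2\le n^{-1}E\|\xi_1\|^2$, so the lemma reduces to $E\|\xi_1\|^2=O(K)$. From the block form of $D(x,z)$ one has $\|\xi_1\|^2=\|A(X_1,Z_1)\|^2\bigl(\ell_\gamma^2+\ell_\sigma^2\bigr)$, the $\ell$-terms evaluated at $Y_1$ and $(\bar\gamma_0,\bar\sigma_0)(X_1,Z_1)$. Here $\|A(x,z)\|^2=\|x\|^2+\sum_{j=1}^d\|B_j(z^{(j)})\|^2=O(K)$ uniformly on ${\cal X}\times{\cal Z}$: ${\cal X}$ is compact, and for each $j$ at most $O(1)$ of the normalized bases $B^{[\xi]}_{j,k}(z^{(j)})$ are nonzero at a given point while each is $O(\sqrt K)$, because $\|\bar\psi^{[\xi]}_{j,k}\|\asymp K^{-1/2}$ (the lower bound being the linear independence of Lemma \ref{Bsplinematrix}), which is the bound already recorded above Theorem \ref{Norm}. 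It then remains to prove $E[\ell_\gamma^2+\ell_\sigma^2\mid X_1,Z_1]=O(1)$ uniformly.

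For this conditional-moment bound I would recall that, given $(X_1,Z_1)=(x,z)$, $Y_1$ has law $F_w(\cdot|x,z)$, which by the second-order condition (\ref{DomCon1}) and the convention $\varepsilon_w\equiv0$ differs from the exact GPD law $H(\cdot/\sigma_{w0}(x,z)|\gamma_0(x,z))$ by a density perturbation of order $\alpha(\tau(w_N|x,z)|x,z)\to0$, while $\sup_{(x,z)}|\bar\gamma_0-\gamma_0|=O(K^{-\zeta})$ and $\sup_{(x,z)}|\log\bar\sigma_0-\log\sigma_{w0}|=O(K^{-\zeta})$ by Lemma \ref{splineap}. Hence, uniformly in $(x,z)$, $E[\ell_\gamma^2\mid x,z]\to\int\ell_\gamma(y|\gamma_0(x,z),\sigma_{w0}(x,z))^2\,dH(y|\gamma_0(x,z),\sigma_{w0}(x,z))=\dfrac{2}{(2\gamma_0(x,z)+1)(\gamma_0(x,z)+1)}$ and $E[\ell_\sigma^2\mid x,z]\to\dfrac{1}{2\gamma_0(x,z)+1}$, by the GPD score identities stated just before the lemma; these limits are bounded because $\gamma_0$ is continuous on the compact set ${\cal X}\times{\cal Z}$ and, in each of (S1), (S2), (S3), is bounded above and bounded away from both $-1$ and $-1/2$ (under (S3) the formulas are read as $\gamma\to0$ limits). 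Combining the three steps yields $E\|\xi_1\|^2=O(K)$, hence the claim.

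The hard part will be making this last step rigorous and uniform: one must control the spline perturbation $(\bar\gamma_0,\bar\sigma_0)\to(\gamma_0,\sigma_{w0})$ and the second-order remainder $\alpha(\cdot)Q(\cdot)$ in the conditional density \emph{simultaneously} inside the nonlinear integrands $\ell_\gamma^2,\ell_\sigma^2$, which under (S2) become unbounded near the finite right endpoint of the support of $Y$ (and whose singularity location, at $-\bar\sigma_0/\bar\gamma_0$, itself moves with the approximation). This is exactly why (S2) imposes $\gamma_0(x,z)>-1/(2+\delta)$: it gives $\int\ell_\gamma^{\,2+\delta'}\,dH<\infty$ and $\int\ell_\sigma^{\,2+\delta'}\,dH<\infty$ for some $\delta'>0$, and a H\"older argument then forces both perturbations to contribute only $o(1)$ to the integral, uniformly in $(x,z)$. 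Under (S1) this is immediate since $Y$ has unbounded support but $\ell_\gamma,\ell_\sigma$ then have all moments with bounds continuous in $\gamma_0>\gamma_{min}$, and under (S3) one works with the $\gamma\to0$ limits throughout; the auxiliary $O(\sqrt K)$ bound on the normalized $B$-splines follows from the band structure of $\Psi_j$ exhibited in the proof of Lemma \ref{penalty} together with Lemma \ref{Bsplinematrix}.
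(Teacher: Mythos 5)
Your proposal is correct and follows essentially the same route as the paper: drop the nonrandom penalty gradient under centring, use the i.i.d.\ structure to reduce to the second moment of a single score term, evaluate the conditional second moments of $\ell_\gamma,\ell_\sigma$ via the GPD Fisher-information identities recorded before the lemma, and pair this with the $O(K)$ bound on the squared norm of the normalized $B$-spline vector (the paper uses $E[\|B(Z)\|^2]=d(K+\xi)$ where you use the uniform sup bound, which is immaterial). Your extra care about the $O(K^{-\zeta})$ spline perturbation and the second-order remainder in the conditional density is a welcome refinement of a step the paper treats as an exact equality, but it does not change the argument.
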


\begin{proof}[Proof of Lemma \ref{GradientSt}]
Since the penalty term of $\ell_{pen}$ does not contain the stochastic structure, we have 
\[
\left\| \frac{\partial }{\partial \theta} \ell_{pen}(\theta_0)-  E\left[ \frac{\partial }{\partial \theta} \ell_{pen}(\theta_0)\right]\right\|^2
=\left\| \frac{\partial }{\partial \theta} \ell(\theta_0)-  E\left[ \frac{\partial }{\partial \theta} \ell(\theta_0)\right]\right\|^2.
\]
We let $\bar{\ell}_A= \ell_A(Y|\bar{\gamma}_0(X,Z),\bar{\sigma}_{w0}(X,Z))$ for $A=\{\gamma,\sigma\}$ and let $I$ be $(p+K+\xi)$-identity matrix. 
We write $C=(X^\top , B(Z)^\top , X^\top , B(Z)^\top )^\top$.
Then, from Lemma \ref{splineap}, we obtain
\begin{eqnarray*}
&&E\left[
\left\| \frac{\partial }{\partial \theta} \ell(\theta_0)-  E\left[ \frac{\partial }{\partial \theta} \ell(\theta_0)\right]\right\|^2
\right]\\
&&=\frac{1}{N}
E\left[P(Y>0\mid X,Z)
\left. C^\top
\left(
\begin{array}{cc}
\bar{\ell}_\gamma^2I&\bar{\ell}_\gamma\bar{\ell}_\sigma I\\
\bar{\ell}_\sigma\bar{\ell}_\gamma I&\bar{\ell}_\sigma^2I
\end{array}
\right)
C
\right| Y>0
\right]\\
&&=
\frac{1}{N}E\left[P(Y>0\mid X,Z) \left.\frac{\gamma(X,Z)+5}{(2\gamma(X,Z)+1)(\gamma(X,Z)+1)}(\|X\|^2 + \|B(Z)\|^2)\right| Y>0\right]\\
&&\quad\quad\times (1+o(1)).
\end{eqnarray*}
From the mean value theorem for integrals, there exists $x^*\in\mathbb{\cal X}$ and $z^*\in{\cal Z}$ such that 
\begin{eqnarray*}
&&E\left[P(Y>0\mid X,Z) \left.\frac{\gamma(X,Z)+5}{(2\gamma(X,Z)+1)(\gamma(X,Z)+1)}(\|X\|^2 + \|B(Z)\|^2)\right| Y>0\right]\\
&&=
E[P(Y>0\mid X,Z) ]
\frac{\gamma(x^*,z^*)+5}{(2\gamma(x^*,z^*)+1)(\gamma(x^*,z^*)+1)}(\|x^*\|^2 + \|B(z^*)\|^2).
\end{eqnarray*}
By the property of normalized $B$-spline, we have $\|B(z^*)\|^2= O(K)$, which implies that 
\[
E\left[
\left\| \frac{\partial }{\partial \theta} \ell(\theta_0)-  E\left[ \frac{\partial }{\partial \theta} \ell(\theta_0)\right]\right\|^2
\right]
=O(K p_N/N).
\] 
\end{proof}

\begin{lemma}\label{GradientEx}
Suppose that {\rm (C1)--(C5)}. 
Then, in each scenario {\rm (S1), (S2) or (S3)}, as $N\rightarrow\infty$,
\[
\left\|E\left[ \frac{\partial }{\partial \theta} \ell_{pen}(\theta_0)\right]
\right\|^2\leq O(p_N^2 K^{-2m}) + O\left( p_N^{2(1-\rho)}\right).
\]
\end{lemma}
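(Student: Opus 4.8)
The plan is to decompose the penalized score at $\theta_0$, using that by Lemma \ref{penalty} the penalty is the quadratic form $\theta^T\Omega_{\gamma,\sigma}\theta$ with $\Omega_{\gamma,\sigma}$ symmetric, positive semidefinite and block diagonal in $(\theta_\gamma,\theta_\sigma)$ (the $\gamma$-block scaled by $\lambda$, the $\sigma$-block by $\nu$), so that $E[\partial\ell_{pen}(\theta_0)/\partial\theta]=E[\partial\ell(\theta_0)/\partial\theta]+2\Omega_{\gamma,\sigma}\theta_0$ since the penalty is nonrandom. I would bound the two terms separately and add them.

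For the penalty term, using $\Omega_{\gamma,\sigma}\succeq 0$ I would write $\|\Omega_{\gamma,\sigma}\theta_0\|\le\|\Omega_{\gamma,\sigma}\|_{\rm op}^{1/2}(\theta_0^T\Omega_{\gamma,\sigma}\theta_0)^{1/2}$. By the definition of $\Omega_{\gamma,\sigma}$, $\theta_0^T\Omega_{\gamma,\sigma}\theta_0=\sum_{j=1}^d\big\{\lambda\int_0^1(\bar g_{0j}^{(m)})^2+\nu\int_0^1(\bar s_{0j}^{(m)})^2\big\}$, and since $\bar g_{0j},\bar s_{0j}$ are spline approximants of $g_{0j},s_{0j}\in{\cal C}^\zeta$ with $\zeta>m$ (Lemma \ref{splineap} and (C2)), their $m$-th derivatives converge uniformly to $g_{0j}^{(m)},s_{0j}^{(m)}$ by standard $B$-spline derivative bounds, hence stay bounded, so $\theta_0^T\Omega_{\gamma,\sigma}\theta_0=O(\lambda)+O(\nu)$. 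Since $\|\Omega_{\gamma,\sigma}\|_{\rm op}=O((\lambda\vee\nu)K^{2m})$ by Lemma \ref{penalty}, treating the $\gamma$- and $\sigma$-blocks separately gives $\|2\Omega_{\gamma,\sigma}\theta_0\|=O(\lambda K^m)+O(\nu K^m)$.

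For the likelihood term, write $\partial\ell(\theta_0)/\partial\theta=\sum_i(\bar\ell_{\gamma,i}A(X_i,Z_i)^T,\bar\ell_{\sigma,i}A(X_i,Z_i)^T)^T$ with $\bar\ell_{\gamma,i}=\ell_\gamma(Y_i|\bar\gamma_0(X_i,Z_i),\bar\sigma_{w0}(X_i,Z_i))$ and similarly for $\bar\ell_{\sigma,i}$, and condition on $(X_i,Z_i)=(x,z)$. Since $Y_i$ given exceedance has law $F_w(\cdot|x,z)$, the conditional mean of $\bar\ell_{\gamma,i}$ is $\int\ell_\gamma(y|\bar\gamma_0(x,z),\bar\sigma_{w0}(x,z))\,dF_w(y|x,z)$, which I would control in three steps: (i) because $\gamma_0,\sigma_{w0}$ are additive by (\ref{additiveEVI})--(\ref{additiveScale2}) and $\bar\gamma_0=\gamma_0+O(K^{-\zeta})$, $\bar\sigma_{w0}=\sigma_{w0}(1+O(K^{-\zeta}))$ uniformly (Lemma \ref{splineap}), a first-order Taylor expansion in the parameters replaces $(\bar\gamma_0,\bar\sigma_{w0})$ by $(\gamma_0,\sigma_{w0})$ at cost $O(K^{-\zeta})$; (ii) the uniform second-order condition (\ref{DomCon1}) together with (C4), which gives $\alpha(\tau(w_N|x,z)|x,z)=O((N/n)^\rho)$, lets one replace $F_w(\cdot|x,z)$ by the GPD law $H(\cdot/\sigma_w^\dagger(x,z)|\gamma_0(x,z))$ at cost $O((N/n)^\rho)$ — this is the delicate step, discussed below; (iii) since $\sigma_w^\dagger=\sigma_{w0}$, the remaining integral $\int\ell_\gamma(y|\gamma_0(x,z),\sigma_{w0}(x,z))\,dH(y|\gamma_0(x,z),\sigma_{w0}(x,z))$ equals $0$ by the GPD score identity recalled before Lemma \ref{GradientSt}. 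This yields $|E[\bar\ell_{\gamma,i}\mid x,z]|=O(K^{-\zeta})+O((N/n)^\rho)$ uniformly in $(x,z)$, and the same for $\bar\ell_{\sigma,i}$. To pass to the norm of $E[\partial\ell(\theta_0)/\partial\theta]$, note that the local support of the normalized $B$-spline gives $E[|B^{[\xi]}_{j,k}(Z^{(j)})|]\le\|B^{[\xi]}_{j,k}\|_{L_2}\,P(Z^{(j)}\in{\rm supp}\,B^{[\xi]}_{j,k})^{1/2}=O(K^{-1/2})$, which cancels the $O(\sqrt K)$ arising from the $d(K+\xi)$ spline coordinates, so $\|E[\partial\ell(\theta_0)/\partial\theta]\|=O(K^{-\zeta})+O((N/n)^\rho)$; under (C2) and (C5) the term $O(K^{-\zeta})$ is of smaller order than $O(\lambda K^m)$. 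Adding the two contributions gives the claim. The integrability needed in steps (i) and (iii) — of $\ell_\gamma,\ell_\sigma$ and their parameter derivatives against $H(\cdot|\gamma_0,\sigma_{w0})$ — holds exactly when $\gamma_0>-1/2$, which is guaranteed in (S1), in (S2) through the constant $\delta$, and trivially in (S3); this is where the three scenarios enter, each with the appropriate GPD formulas.

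The step I expect to be the main obstacle is (ii): showing that $\int\ell_\gamma(y|\cdot)\,d[F_w(\cdot|x,z)-H(\cdot/\sigma_w^\dagger(x,z)|\gamma_0(x,z))]$ is genuinely $O(\alpha(\tau(w|x,z)|x,z))$ uniformly over $(x,z)\in{\cal X}\times{\cal Z}$, because the integrand $\ell_\gamma$ is unbounded in $y$ — most severely under (S1), where the support is infinite — while the second-order condition only controls the signed measure $F_w-H$ pointwise in $y$, with the explicit shape $\alpha\,Q$ plus an $o(\alpha)$ remainder. I would handle this by integrating by parts, splitting the range at a slowly growing truncation level, and using $\rho<0$ (so the $Q$-term and the remainder decay polynomially) together with the finiteness of low-order GPD moments; the assumptions $\sup_{(x,z)}\rho(x,z)\le\rho<0$ from (C4) and the sign conditions (S1)--(S3) are precisely what make this tail control uniform.
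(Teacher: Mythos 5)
Your proposal is correct and follows essentially the same route as the paper: the same triangle-inequality split into likelihood score plus penalty gradient, the same treatment of the likelihood part (condition on $(x,z)$, kill the leading term with the GPD score identity, pick up $O(\alpha(\tau(w_N|x,z)|x,z))=O((N/n)^\rho)$ from the second-order condition and $O(K^{-\zeta})$ from Lemma \ref{splineap}, then use the $O(K^{-1/2})$ per-coordinate bound on $E[q(Z)B(Z)]$ to offset the $d(K+\xi)$ spline coordinates). The only substantive difference is in the penalty term: the paper computes $\lambda K^{2m}\Omega b_{0}$ explicitly, using the band structure of $\Psi_j^TD_m^TR_mD_m\Psi_j$ and the representation of $R_mD_m\Psi_jb_{0j}$ via integrals of $\psi^{[\xi-m]}_k g_{0j}^{(m)}$ to get $O(\lambda K^{m-1/2})$ per entry and hence $O(\lambda K^m)$ in norm, whereas you use the cleaner spectral bound $\|\Omega_{\gamma,\sigma}\theta_0\|\le\|\Omega_{\gamma,\sigma}\|_{\rm op}^{1/2}(\theta_0^T\Omega_{\gamma,\sigma}\theta_0)^{1/2}$ together with $\theta_0^T\Omega_{\gamma,\sigma}\theta_0=O(\lambda+\nu)$ (bounded $m$-th derivatives of the spline approximants) and $\|\Omega_{\gamma,\sigma}\|_{\rm op}=O((\lambda\vee\nu)K^{2m})$; both give the same rate, and your version avoids the band-matrix bookkeeping. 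Your explicit concern about uniform integrability of the unbounded score against $F_w-H$ in step (ii) is well placed — the paper simply asserts $|q_\gamma(x,z)|<\infty$ from the form of $Q$ — so your truncation/integration-by-parts sketch is, if anything, more careful than the published argument.
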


\begin{proof}[Proof of Lemma \ref{GradientEx}]
By the triangle inequality, we have 
\[
\left\|E\left[ \frac{\partial }{\partial \theta} \ell_{pen}(\theta_0)\right]
\right\|^2
\leq \left\|E\left[ \frac{\partial }{\partial \theta} \ell(\theta_0)\right]\right\|^2+\left\|\frac{\partial }{\partial \theta}\theta_0^\top \Omega_{\gamma,\sigma}\theta_0 
\right\|^2.
\]

We first consider the part of log-likelihood. 
Again, we use the symbol $\bar{\ell}_A, A\in\{\gamma,\sigma\}$ defined in the proof of Lemma \ref{GradientSt}. 
Then, we have 
\begin{eqnarray*}
E\left[ \frac{\partial }{\partial \theta} \ell(\theta_0)\right]
=
E\left[
\begin{array}{l}
P(Y>0)E[\bar{\ell}_\gamma\mid Y>0,X,Z] X\\
P(Y>0)E[\bar{\ell}_\gamma\mid Y>0,X,Z] B(Z)\\
P(Y>0)E[\bar{\ell}_\sigma\mid Y>0,X,Z] X\\
P(Y>0)E[\bar{\ell}_\sigma\mid Y>0,X,Z] B(Z)
\end{array}
\right] .
\end{eqnarray*}
In following, for simplicity, we write $\alpha(w_N|x,z)$ as $\alpha(\tau(w_N|x,z)|x,z)$.
For any integrable function $q(Y)$, we have that for any $(x,z)\in{\cal X}\times{\cal Z}$,
$$
E_{Y|x,z}[q(Y)\mid Y>0]=\int q(y)dH(y|x,z)+ \alpha(w_N|x,z)\int q(y)Q^\prime(y/\sigma_{w0}(x,z)|x,z)dy(1+o(1)),
$$
where, $E_{Y|x,z}$ is the expectation by the conditional distribution of $Y_i$ given $(X_i,Z_i)=(x,z)$ and 
$H(y|x,z)=H(y|\gamma_0(x,z),\sigma_{w0}(x,z))$.
If $q(y)=\bar{\ell}_A, A\in\{\gamma,\sigma\}$, we have $\int q(y)dH(y|x,z)=0$.
Since $\bar{\gamma}_0(x,z) -\gamma_0(x,z)=O(K^{-\zeta})$ and $\bar{\sigma}_{w0}(x,z)-\sigma_{w0}(x,z)=O(K^{-\zeta})$ from Lemma \ref{splineap}, we obtain 
\begin{eqnarray*}
&&E_{Y|x,z}\left[\bar{\ell}_\gamma \mid Y>0\right]\\
&&
=
E_{Y|x,z}\left[
\ell_\gamma(y|\bar{\gamma}_0(x,z),\bar{\sigma}_{w0}(x,z))
\mid Y>0\right]\\
&&
=\alpha(\tau(w_N|x,z)|x,z)\int \ell_\gamma(y|\gamma_0(x,z),\sigma_{w0}(x,z)) Q^\prime(y/\sigma_{w0}(x,z)|x,z)dy(1+o(1))+O(K^{-\zeta})\\
&&\equiv \alpha(\tau(w_N|x,z)|x,z) q_{\gamma}(x,z)+O(K^{-\zeta}).
\end{eqnarray*}
By the definition of $Q$, we can find that $|q_\gamma(x,z)|<\infty$ for all $(x,z)\in{\cal X}\times{\cal Z}$ under each case (S1)--(S3). 
Similary, we have 
$$
E_{Y|x,z}\left[\bar{\ell}_\sigma\mid Y>0\right]
= \alpha(\tau(w_N|x,z)|x,z)q_{\sigma}(x,z)(1+o(1))+O(K^{-\zeta}),
$$
where 
$$
q_{\sigma}(x,z)=\int  \ell_{\sigma}(y|\gamma_0(x,z),\sigma_{w0}(x,z))Q^\prime(y/\sigma_{w0}(x,z)|x,z)dy
$$
and $|q_{\sigma}(x,z)|<\infty$ for all $(x,z)\in{\cal X}\times{\cal Z}$.
Thus, 
\begin{eqnarray*}
E\left[ \frac{\partial }{\partial \theta} \ell(\theta_0)\right]
&=&
E\left[
\begin{array}{l}
P(Y>0\mid X,Z)\alpha(\tau(w_N|X,Z)|X,Z)q_{\gamma}(X,Z)\{1+O(K^{-\zeta})\}X\\
P(Y>0\mid X,Z)\alpha(\tau(w_N|X,Z)|X,Z)q_{\gamma}(X,Z)\{1+O(K^{-\zeta})\}B(Z)\\
P(Y>0\mid X,Z)\alpha(\tau(w_N|X,Z)|X,Z)q_{\sigma}(X,Z)\{1+O(K^{-\zeta})\} X\\
P(Y>0\mid X,Z)\alpha(\tau(w_N|X,Z)|X,Z)q_{\sigma}(X,Z)\{1+O(K^{-\zeta})\} B(Z)
\end{array}
\right].
\end{eqnarray*}
By the mean value theorem for integrals, there exists $(x^*, z^*)\in{\cal X}\times {\rm Z}$ such that 
\begin{eqnarray*}
&&E[P(Y>0\mid X,Z)\alpha(\tau(w_N|X,Z)|X,Z)q_{A}(X,Z)X(1+o(1))]\\
&&=E[P(Y>0\mid X,Z)\alpha(\tau(w_N|X,Z)|X,Z)] q_{A}(x^*,z^*)x^*(1+o(1))
\end{eqnarray*}
for $A\in\{\gamma,\sigma\}$. 
Under (C4), we obtain 
\begin{eqnarray*}
E[P(Y>0\mid X,Z)\alpha(\tau(w_N|x,z)|X,Z)]
&=& O\left(E\left[P(Y>0|X,Z)^{1-\rho(X,Z)}\right]\right)\\
&\leq& O(P(Y>0)^{1-\rho})\\
&=&O(p_N^{1-\rho}).
\end{eqnarray*}
Similarly, there exists $(x^{**}, z^{**})\in{\cal X}\times {\rm Z}$ such that 
\begin{eqnarray*}
&&E[P(Y>0\mid X,Z)\alpha(\tau(w_N|x,z)|X,Z)q_{A}(X,Z)B(Z)(1+o(1))]\\
&&=E[P(Y>0\mid X,Z)\alpha(\tau(w_N|x,z)|X,Z)] q_{A}(x^{**},z^{**})B(z^{**})(1+o(1))\\
&&\leq O(p_N^{1-\rho}).
\end{eqnarray*}
By the property of normalized $B$-spline, we can evaluate
$$
\left\|E\left[ \frac{\partial }{\partial \theta} \ell(\theta_0)\right]\right\|^2 \leq O\left(p_N^{2(1-\rho)}\right).
$$

We next derive the asymptotic order of the penalty term:
$$
\frac{\partial}{\partial \theta_0} \sum_{j=1}^d\left\{ \lambda_j\int_0^1 \{\bar{g}^{(m)}_j(z)\}^2dz
+\nu \int_0^1\{\bar{s}^{(m)}_j(z)\}^2dz\right\}.
$$
From Lemma \ref{penalty}, we have
$$
 \sum_{j=1}^d \lambda_j\int_0^1 \{\bar{g}^{(m)}_j(z)\}^2dz=\frac{K^{2m}}{2}b_0^\top  \Omega(\lambda) b_0,
$$
 where $\Omega$ is the $d(K+\xi)$ square matrix with
 \begin{eqnarray*}
\Omega(\lambda)=
(m!)^2\left[
\begin{array}{ccc}
\lambda_1\Psi_1^\top D_m^\top R_mD_m\Psi_1& & \\
&\ddots &\\
&&\lambda_d \Psi_d^\top D_m^\top R_mD_m\Psi_d 
\end{array}
\right].
\end{eqnarray*}
Here, all elements in the off diagonal block (the part of blank) of $\Omega$ are zero. 
Similarly, we can write 
$$
 \sum_{j=1}^d \nu_j \int_0^1 \{\bar{s}^{(m)}_j(z)\}^2dz=\frac{K^{2m}}{2}c_0^\top  \Omega(\nu) c_0.
$$
Therefore, using 
\begin{eqnarray*}
\Omega_{\gamma,\sigma}
=
\left[
\begin{array}{cccc}
O&&&\\
&K^{2m}\Omega(\lambda)&&\\
&&O&\\
&&&K^{2m}\Omega(\nu)
\end{array}
\right],
\end{eqnarray*}
where the all elements of $O$ and the off-diagonal block are zero,  
we obtain 
$$
\frac{\partial}{\partial \theta_0} \sum_{j=1}^d\left\{ \lambda_j\int_0^1 \{\bar{g}^{(m)}_j(z)\}^2dz
+\nu_j \int_0^1\{\bar{s}^{(m)}_j(z)\}^2dz\right\}=\Omega_{\gamma,\sigma}\theta_0.
$$

Meanwhile, from the definition of $R_m$, $D_m$, $\Psi_j$, $b_{0j}$ and Lemma \ref{penalty}, we obtain 
\begin{eqnarray*}
&&\lambda_j K^{2m}(m!)^2\Psi_j^\top D_m^\top R_mD_m\Psi_j b_{0j}\\
&&=\lambda_j K^m m! \Psi_j^\top D_m^\top 
\left[
\begin{array}{c}
\int_0^1 \psi_1^{[\xi-m]}(z)g_{0j}^{(m)}(z)dz\\
\vdots\\
\int_0^1 \psi_{K+\xi-m}^{[\xi-m]}(z)g_{0j}^{(m)}(z)dz
\end{array}
\right]  (1+o(1))\\
&&= O(\lambda_j K^{m})
\end{eqnarray*}
and 
\begin{eqnarray*}
\nu_j K^{2m}(m!)^2\Psi_j^\top D_m^\top R_mD_m\Psi_j c_{0j}= O(\nu_j K^{m} ).
\end{eqnarray*}
Therefore, we obtain $K^{2m}\Omega(\lambda) b_0=O(\max_j\lambda_j K^{m})$ and  $K^{2m} \Omega(\nu) c_0=O(\max _j \nu_j K^{m})$. 
Since $\Omega(\lambda)$ and $\Omega(\nu)$ are band matrix,  we have $\|K^{2m}\Omega(\lambda) b_0\|^2=O((\max_j \lambda_j)^2 K^{2m})$ and $\| K^{2m} \Omega(\nu) c_0\|=O((\max_j \nu_j)^2 K^{2m})$. 
This implies 
\[
\left\|\frac{\partial }{\partial \theta}\theta_0^\top \Omega_{\gamma,\sigma}\theta_0 
\right\|^2 \leq O(\lambda^2 K^{2m}) + O(\nu^2 K^{2m}).
\]
Under (C5), we obtain $O(\lambda^2 K^{2m}) + O(\nu^2 K^{2m})=O(p_N^2K^{-2m})$.
\end{proof}

\begin{lemma}\label{Hessian} 
Suppose that {\rm (C1)--(C5)}. 
In each scenario {\rm (S1), (S2) or (S3)}, 
for any vector $v\in\mathbb{R}^{2(p+K+\xi)}-\{0\}$ with $\|v\|= 1$, there exist positive constants $C_m$ and $C_M$ such that
\[
p_N C_m < v^\top E\left[\frac{\partial^2}{\partial \theta \partial \theta^\top } \ell_{pen}(\theta_0)\right] v < p_N C_M .
\]
\end{lemma}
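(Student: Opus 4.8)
The plan is to split the penalized Hessian as $\partial^2\ell_{pen}(\theta_0)/\partial\theta\partial\theta^{T}=\partial^2\ell(\theta_0)/\partial\theta\partial\theta^{T}+\partial^2\bigl(\theta_0^{T}\Omega_{\gamma,\sigma}\theta_0\bigr)/\partial\theta\partial\theta^{T}$, to dispatch the penalty part by Lemma~\ref{penalty} and (C5), and to identify the leading term of $E[\partial^2\ell(\theta_0)/\partial\theta\partial\theta^{T}]$ with the non-singular Fisher information matrix of the GPD. (Throughout, $\ell$ is taken with the normalization for which $\partial^2\ell_{pen}(\theta_0)/\partial\theta\partial\theta^{T}$ is of constant order, cf.\ the proof of Lemma~\ref{GradientSt}.) For the penalty, Lemma~\ref{penalty} shows its Hessian is proportional to $\Omega_{\gamma,\sigma}$, which is positive semi-definite with vanishing off-diagonal blocks and nonzero blocks $\lambda K^{2m}\Omega$ and $\nu K^{2m}\Omega$; since $\Omega=O(1)$ in operator norm and $\lambda K^{2m}=O(1)$, $\nu K^{2m}=O(1)$ by (C5), the penalty adds a quantity lying in $[0,O(1)]$ to the quadratic form $v^{T}(\cdot)v$, uniformly over $\|v\|\le 1$. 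Hence it suffices to establish $c_m\|v\|^2\le v^{T}E[\partial^2\ell(\theta_0)/\partial\theta\partial\theta^{T}]v\le c_M\|v\|^2$ for fixed constants $0<c_m\le c_M<\infty$: the penalty leaves the lower bound intact and raises the upper bound by at most a constant.

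Because $\bar\gamma(X,Z)=A(X,Z)^{T}\theta_\gamma$ and $\log\bar\sigma(X,Z)=A(X,Z)^{T}\theta_\sigma$ are linear in $\theta$, the Hessian of $-\log h(Y_i\,|\,\bar\gamma(X_i,Z_i),\bar\sigma(X_i,Z_i))$ is the $2\times2$ block matrix whose blocks are the second partials $\ell_{\gamma\gamma},\ell_{\gamma\sigma},\ell_{\sigma\sigma}$ of $-\log h$ with respect to $(\gamma,\log\sigma)$, evaluated at $(\bar\gamma_0(X_i,Z_i),\bar\sigma_{w0}(X_i,Z_i))$ and $Y=Y_i$, each multiplied by $A(X_i,Z_i)A(X_i,Z_i)^{T}$. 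For the conditional expectation over $Y\,|\,(x,z)$ I would argue exactly as in the proof of Lemma~\ref{GradientEx}: by Lemma~\ref{splineap} one replaces $(\bar\gamma_0,\bar\sigma_{w0})$ by $(\gamma_0,\sigma_{w0})$ up to $O(K^{-\zeta})$, and by the second-order condition (\ref{DomCon1}) together with (C4) one replaces $F_w(\cdot|x,z)$ by $H(\cdot|\gamma_0(x,z),\sigma_{w0}(x,z))$ up to $O(\alpha(\tau(w|x,z)|x,z))=O((N/n)^\rho)$. The GPD information identities recorded just before Lemma~\ref{GradientSt}, together with the Bartlett identity $\int\ell_{AB}\,dH=\int\ell_A\ell_B\,dH$ for $A,B\in\{\gamma,\sigma\}$, then give, uniformly in $(x,z)$,
$$
E_{Y|x,z}\begin{pmatrix}\ell_{\gamma\gamma}&\ell_{\gamma\sigma}\\ \ell_{\gamma\sigma}&\ell_{\sigma\sigma}\end{pmatrix}=M\bigl(\gamma_0(x,z)\bigr)+O\bigl((N/n)^\rho\bigr)+O(K^{-\zeta}),\qquad M(\gamma)=\begin{pmatrix}\frac{2}{(2\gamma+1)(\gamma+1)}&\frac{1}{(2\gamma+1)(\gamma+1)}\\ \frac{1}{(2\gamma+1)(\gamma+1)}&\frac{1}{2\gamma+1}\end{pmatrix}.
$$
Writing $v=(v_1^{T},v_2^{T})^{T}$ with $v_1,v_2\in\mathbb{R}^{p+K+\xi}$ and $w(x,z)=(v_1^{T}A(x,z),\,v_2^{T}A(x,z))^{T}$, it follows that $v^{T}E[\partial^2\ell(\theta_0)/\partial\theta\partial\theta^{T}]v=E_{X,Z}[\,w(X,Z)^{T}M(\gamma_0(X,Z))\,w(X,Z)\,]$ up to an additive term of magnitude at most $\{O((N/n)^\rho)+O(K^{-\zeta})\}\|v\|^2=o(1)\|v\|^2$.

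To bound the leading term, note that in each of (S1)--(S3) one has $2\gamma_0(x,z)+1>0$ and $\gamma_0(x,z)+1>0$ — under (S2) because $\gamma_0>-1/(2+\delta)>-1/2$, and $\gamma_0$ is bounded on ${\cal X}\times{\cal Z}$ by continuity and compactness — so $M(\gamma_0(x,z))$ is well defined with $\det M(\gamma)=\{(2\gamma+1)(\gamma+1)^2\}^{-1}>0$ and bounded trace; hence its eigenvalues lie in a fixed interval $[\mu_m,\mu_M]$, $0<\mu_m\le\mu_M<\infty$, uniformly in $(x,z)$ (this is the familiar non-singularity of the GPD Fisher information for $\gamma>-1/2$). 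Consequently $\mu_m\|w(x,z)\|^2\le w(x,z)^{T}M(\gamma_0(x,z))\,w(x,z)\le\mu_M\|w(x,z)\|^2$ pointwise; taking $E_{X,Z}$, using $\|w(X,Z)\|^2=v_1^{T}A(X,Z)A(X,Z)^{T}v_1+v_2^{T}A(X,Z)A(X,Z)^{T}v_2$, and invoking (C1) — namely $E[A(X,Z)A(X,Z)^{T}]$ is positive definite with eigenvalues bounded away from $0$ and $\infty$ uniformly in $K$, the lower bound on the $B$-spline block being Lemma~\ref{Bsplinematrix} — yields $\mu_m M_{min}\|v\|^2\le v^{T}E[\partial^2\ell(\theta_0)/\partial\theta\partial\theta^{T}]v\le\mu_M M_{max}\|v\|^2$ for all large $N$. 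Combining with the penalty bound from the first paragraph produces $C_m$ and $C_M$; restricting to $\|v\|=1$ is the assertion.

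The main obstacle is the uniform-in-$(x,z)$ and uniform-across-(S1)--(S3) evaluation of the conditional integrals $E_{Y|x,z}[\ell_{\gamma\gamma}]$, $E_{Y|x,z}[\ell_{\gamma\sigma}]$, $E_{Y|x,z}[\ell_{\sigma\sigma}]$: one must check that these second derivatives of $-\log h$ are integrable against $F_w(\cdot|x,z)$ and against the perturbation $Q'$ in (\ref{DomCon1}) with bounds uniform in $(x,z)$, which under (S2) is exactly where the moment restriction $\gamma_0>-1/(2+\delta)$ enters, and under (S3) requires passing to the $\gamma\to0$ limiting forms of $\ell_\gamma,\ell_\sigma$. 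This step parallels, but slightly extends, the gradient computation already carried out in Lemma~\ref{GradientEx}; the only other ingredient — non-degeneracy of $E[A(X,Z)A(X,Z)^{T}]$ uniformly in $K$ — is supplied directly by Lemma~\ref{Bsplinematrix} and (C1), so no new difficulty arises there.
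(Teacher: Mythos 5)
Your proposal is correct and follows essentially the same route as the paper: split off the penalty Hessian $\Omega_{\gamma,\sigma}$ (positive semi-definite and $O(1)$ under (C5) and Lemma \ref{penalty}), identify $E[\partial^2\ell(\theta_0)/\partial\theta\partial\theta^T]$ with the GPD Fisher information blocks weighted by $A(X,Z)A(X,Z)^T$, and conclude via positive definiteness of $E[A(X,Z)A(X,Z)^T]$ from (C1) and Lemma \ref{Bsplinematrix}. Your pointwise eigenvalue bound on the $2\times2$ matrix $M(\gamma_0(x,z))$ via its determinant $\{(2\gamma+1)(\gamma+1)^2\}^{-1}$, and your explicit tracking of the $O((N/n)^\rho)+O(K^{-\zeta})$ error terms and the integrability checks under (S2)--(S3), merely make more precise the steps the paper compresses into ``straightforward calculation'' and a Cauchy--Schwarz remark.
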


\begin{proof}[Proof of Lemma \ref{Hessian}]
We first obtain
\begin{eqnarray*}
\frac{\partial^2}{\partial \theta \partial \theta^\top } \ell_{pen}(\theta)
=
\frac{\partial^2}{\partial \theta \partial \theta^\top } \ell(\theta)+ \Omega_{\gamma,\sigma},
\end{eqnarray*}
where $\Omega_{\gamma,\sigma}$ is that given in the proof of Lemma \ref{GradientEx}. 
Next, for the log-likelihood part, we obtain 
\[
E\left[
\frac{\partial^2}{\partial \theta \partial \theta^\top } \ell(\theta)
\right]
=
E_{X,Z}\left[P(Y>0\mid X, Z) E_{Y|x,z}\left[
\frac{\partial^2}{\partial \theta \partial \theta^\top } \ell(\theta)
\mid Y>0\right]\right].
\]
By the straightforward calculation of Fisher information matrix of log-likelihood of GPD, we have 
\begin{eqnarray*}
&&E_{Y|x,z}\left[
\frac{\partial^2}{\partial \theta \partial \theta^\top } \ell(\theta)
\mid Y>0\right]\\
&&=
\frac{1}{2\gamma_0(x,z)+1}\left[
\begin{array}{cc}
\frac{2}{\gamma_0(x,z)+1}A(x,z)A(x,z)^\top & \frac{1}{\gamma_0(x,z)+1}A(x,z)A(x,z)^\top \\
\frac{1}{\gamma_0(x,z)+1}A(x,z)A(x,z)^\top &A(x,z)A(x,z)^\top 
\end{array}
\right](1+o_P(1)).
\end{eqnarray*}
From (C5) and Lemma \ref{penalty}, we have $\Omega_{\gamma,\sigma}= O(\lambda K^{2m})+O(\nu K^{2m})=O(p_N)$.
Furthermore, the conditions (C1) and (S2) imply that Fisher information matrix of log-likelihood of GPD is positive definite. 
Therefore, the proof is completed if we can show that 
\[
E\left[\frac{\partial^2}{\partial \theta \partial \theta^\top } \ell(\theta_0)\right]
=
E_{X,Z}\left[P(Y>0|X,Z) E_{Y|x,z}\left[\frac{\partial^2}{\partial \theta \partial \theta^\top } \ell(\theta_0) \mid Y>0\right]
\right]=O(p_N).
\]
Let $v\in \mathbb{R}^{2(p+K+\xi)}$ with $\|v\|=1$. Then, this can be devided as $v=(v_\gamma^\top ,v_\sigma^\top )^\top $ with $v_\gamma,v_\sigma\in\mathbb{R}^{p+K+\xi}$ and $\|v_\gamma\|\leq 1, \|v_\sigma\|\leq 1$. 
Then, 
\begin{eqnarray*}
v^\top  E_{Y|x,z}\left[\frac{\partial^2}{\partial \theta \partial \theta^\top } \ell(\theta_0)\mid Y>0\right]v&=&
2q_1(x,z)\{A(x,z)^\top v_\gamma\}^2+2q_1(x,z)v_\gamma^\top A(x,z)A(x,z)^\top v_\sigma\\
&&+ q_2(x,z)\{A(x,z)^\top v_\sigma\}^2,
\end{eqnarray*}
where $q_1(x,z)=(\gamma_0(x,z)+1)/(2\gamma_0(x,z)+1)$ and $q_2(x,z)=1/(2\gamma_0(x,z)+1)$.
From mean value theoem for integrals, there exists $(x^*,z^*)\in{\cal X}\times {\cal Z}$ such that 
\begin{eqnarray}
&&E[ P(Y>0\mid X,Z) q_1(X,Z)\{A(X,Z)^\top v_\gamma\}^2] \nonumber\\
&&=E[ P(Y>0\mid X,Z) ] q_1(x^*,z^*)\{A(x^*,z^*)^\top v_\gamma\}^2. \label{qGamma}
\end{eqnarray}
From the property of $B$-spline basis, we obtain $\{A(x^*,z^*)^\top v_\gamma\}^2=O(1)$. 
Therefore, (\ref{qGamma}) has an order $O(E[ P(Y>0\mid X,Z) ] )=O(p_N)$. 
By the similar argument, we obtain 
\[
E[P(Y>0\mid X,Z)q_1(X,Z)v_\gamma^\top A(X,Z)A(X,Z)^\top v_\sigma]=O(p_N)
\]
and 
\[
E[P(Y>0\mid X,Z) q_2(X,Z)\{A(X,Z)^\top v_\sigma\}^2]=O(p_N),
\]
which completes the proof. 
\end{proof}

Next, we show the consistency of $\|\hat{\theta}-\theta_0\|$. 
However, the purpose of Lemma \ref{ConvPara} is not to derive the optimal rate of convergence, which is shown in Lemma \ref{RatePara}.

\begin{lemma}\label{ConvPara}
Suppose that {\rm (C1)--(C5)}. 
In each scenario {\rm (S1), (S2) or (S3)}, for any $\varepsilon>0$,
\begin{eqnarray*}
P\left(\|\hat{\theta}-\theta_0\|>\varepsilon\right)\rightarrow 0,\ \ {\rm as}\ \ N\rightarrow\infty.
\end{eqnarray*}
\end{lemma}

\begin{proof}[Proof of Lemma \ref{ConvPara}]

From Hijort and Pollard (2011), we see that for $\varepsilon>0$, 
\begin{eqnarray*}
P\left(\|\hat{\theta}-\theta_0\|>\varepsilon\right)
\leq P\left(\sup_{\|\theta-\theta_0\|\leq \varepsilon}|\ell_{pen}(\theta)-L(\theta)| \geq 2^{-1}\inf_{\|\theta-\theta_0\|=\varepsilon}|L(\theta)-L(\theta_0)| \right).
\end{eqnarray*}
Thus, our purpose is to show  
$$
P\left(\sup_{\|\theta-\theta_0\|\leq \varepsilon}|\ell_{pen}(\theta)-L(\theta)| \geq 2^{-1}\inf_{\|\theta-\theta_0\|=\varepsilon}|L(\theta)-L(\theta_0)| \right)\rightarrow 0,\ \ {\rm as}\ \ N\rightarrow\infty.
$$
We first evaluate the lower bound of $\inf_{\|\theta-\theta_0\|=\varepsilon}|L(\theta)-L(\theta_0)| $.
Under $\|\theta-\theta_0\|=\varepsilon$, we can write $\theta=\theta_0+ \varepsilon \eta$, where $\eta$ is $2(p+K+\xi)$-vector satisfying $\|\eta\|=1$. 
Then, we have 
\begin{eqnarray*}
L(\theta)-L(\theta_0)
&=&L^\prime(\theta_0)(\theta-\theta_0)+\frac{1}{2}(\theta-\theta_0)^\top L^{\prime\prime}(\theta^*)(\theta-\theta_0)\\
&=&\varepsilon L^\prime(\theta_0)\eta+\frac{\varepsilon^2}{2} \eta^\top L^{\prime\prime}(\theta_0)\eta(1+o(1)) ,
\end{eqnarray*}
where $L^\prime(\theta)=\partial L(\theta)/\partial \theta$ and $L^{\prime\prime}(\theta)=\partial^2 L(\theta)/\partial \theta\partial \theta^\top $. 
From the definition of $\theta_0$, we have $L^\prime(\theta_0)=0$. 
On the other hand,  by the similar argument as the proof of Lemma \ref{GradientSt},$L^{\prime\prime}$ can be written as
\begin{eqnarray*}
&&L^{\prime\prime}(\theta_0)\\
&&=
E\left[
\begin{array}{cc}
\frac{2P(Y>0|X,Z)}{(2\gamma_0(X,Z)+1)(\gamma_0(X,Z)+1)}A(X,Z)A(X,Z)^\top & \frac{P(Y>0|X,Z)}{(2\gamma_0(X,Z)+1)(\gamma_0(X,Z)+1)} A(X,Z)A(X,Z)^\top \\
\frac{P(Y>0|X,Z)}{(2\gamma_0(X,Z)+1)(\gamma_0(X,Z)+1)} A(X,Z)A(X,Z)^\top &\frac{P(Y>0|X,Z)}{2\gamma_0(X,Z)+1} A(X,Z)A(X,Z)^\top 
\end{array}
\right]\\
&&\quad \times(1+o(1)).
\end{eqnarray*}
Since $L^{\prime\prime}(\theta_0)$ is positive definite, from Lemma \ref{Hessian}, there exist constants $C>0$ such that 
\begin{eqnarray*}
L(\theta)-L(\theta_0)
=\frac{\varepsilon^2}{2} \eta^\top L^{\prime\prime}(\theta_0)\eta(1+o(1))
\geq  C p_N \varepsilon^2.
\end{eqnarray*}
We replace $C\varepsilon^2$ with $\varepsilon^2$.
Then, we obtain
\begin{eqnarray*}
&&P\left(\sup_{\|\theta-\theta_0\|\leq \varepsilon}|\ell_{pen}(\theta)-L(\theta)| \geq 2^{-1}\inf_{\|\theta-\theta_0\|=\varepsilon}|L(\theta)-L(\theta_0)| \right)\\
&&\leq P\left(\sup_{\|\theta-\theta_0\|\leq \varepsilon}|\ell_{pen}(\theta)-L(\theta)| \geq  p_N \varepsilon^2\right)
\end{eqnarray*}

Next, since 
\begin{eqnarray*}
&&\sup_{\|\theta-\theta_0\|\leq \varepsilon}|\ell_{pen}(\theta)-L(\theta)|\\
&& \leq |\ell_{pen}(\theta_0)-L(\theta_0)|+ \sup_{\|\theta-\theta_0\|\leq \varepsilon}|\ell_{pen}(\theta)-\ell_{pen}(\theta_0)-L(\theta)+L(\theta_0)|,
\end{eqnarray*}
we have 
\begin{eqnarray*}
&&P\left(\sup_{\|\theta-\theta_0\|\leq \varepsilon}|\ell_{pen}(\theta)-L(\theta)| \geq  p_N\varepsilon^2 \right)\\
&&\leq  P\left(|\ell_{pen}(\theta_0)-L(\theta_0)|\geq 2^{-1} p_N\varepsilon^2 \right)\\
&&\quad +  P\left(\sup_{\|\theta-\theta_0\|\leq \varepsilon}|\ell_{pen}(\theta)-\ell_{pen}(\theta_0)-L(\theta)+L(\theta_0)| \geq 2^{-1} p_N\varepsilon^2\right)\\
&&\equiv {\cal J}_1+{\cal J}_2.
\end{eqnarray*}
We first show ${\cal J}_1\rightarrow 0$. 
By the definition of $\ell_{pen}$, we have
\begin{eqnarray*}
\ell_{pen}(\theta_0)-L(\theta_0)
&=&\ell_{pen}(\theta_0)-L(\theta_0)\\
&&+\sum_{j=1}^d\left\{ \lambda_j \int_0^1 \{\bar{g}^{(m)}_j(z)\}^2dz
+\nu_j \int_0^1\{\bar{s}^{(m)}_j(z)\}^2dz\right\}.
\end{eqnarray*}
From Lemma \ref{splineap}, we obtain
$$
\int_0^1 \{\bar{g}_j^{(m)}(z)\}^2dz=\int_0^1 \{g_j^{(m)}(z)\}^2dz=O(1)
$$
and 
$$
\int_0^1 \{\bar{s}_j^{(m)}(z)\}^2dz=\int_0^1 \{s_j^{(m)}(z)\}^2dz=O(1).
$$
Since $(\lambda+\nu)/p_N\rightarrow 0$ under (C5), we obtain 
$$
\frac{1}{p_N}\sum_{j=1}^d \left\{\lambda_j \int_0^1 \{\bar{g}^{(m)}_j(z)\}^2dz
+\nu_j \int_0^1\{\bar{s}^{(m)}_j(z)\}^2dz\right\} < \varepsilon^2.
$$
Therefore, to show ${\cal J}_1\rightarrow 0$, it is sufficient to prove 
$P(|\ell(\theta_0)-L(\theta_0)|>p_N\varepsilon^2)\rightarrow 0$. 
Note that
\begin{eqnarray*}
&&|\ell(\theta_0)-L(\theta_0)|\\
&&=\left|\frac{1}{N}\sum_{i=1}^N\log h(Y_i|\bar{\gamma}_0(X_i,Z_i),\bar{\sigma}_{w0}(X_i,Z_i))I(Y_i>0)\right.\\
&&\hspace{2mm}\left.-E[\log h(Y_i|\bar{\gamma}_0(X_i,Z_i),\bar{\sigma}_{w0}(X_i,Z_i))I(Y_i>0)]\right|
\end{eqnarray*}

Let $E_i=-\bar{\gamma}_0(X_i,Z_i)^{-1}\log h(Y_i|\bar{\gamma}_0(X_i,Z_i),\bar{\sigma}_{w0}(X_i,Z_i))$. 
Then, by the form of the density function of GPD, $E_i$ is approximately distributed as exponential distribution under $Y_i>0$. 
This implies that $V[E_i\mid Y_i>0]\leq C$ for some constant $C>0$. 
Therefore, by the Chebyshev's inequality, we obtain 
\[
{\cal J}_1\leq \frac{C}{Np_N \varepsilon^2}\rightarrow 0.
\]

Next, we show ${\cal J}_2\rightarrow 0$ as $N\rightarrow \infty$. 
For $\{\theta: \|\theta-\theta_0\|\leq \varepsilon\}$, we write $\theta=\theta_0+\varepsilon \eta$ with $\|\eta\|\leq 1$. 
From Taylor's theorem, we have
\begin{eqnarray*}
&&\ell_{pen}(\theta)-\ell_{pen}(\theta_0)\\
&&= \varepsilon\frac{\partial}{\partial \theta^\top}\ell(\theta_0) \eta +\frac{\varepsilon^2}{2p_N}\eta^\top  \left(\frac{1}{p_N}\frac{\partial^2 \ell(\theta_0)}{\partial\theta\partial\theta^\top }\right) \eta(1+o_P(1))\\
&&\quad +\sum_{j=1}^d \lambda_j\left[\int_0^1 \left\{\frac{d^m}{dx^m} B^{[\xi]}(x)^\top (b_{0j}+\varepsilon \eta_{\gamma, j}) \right\}^2 dx- \int_0^1 \left\{\frac{d^m}{dx^m} B^{[\xi]}(x)^\top b_{0j} \right\}^2 dx\right] \\
&&\quad\quad +\sum_{j=1}^d \nu_j\left[\int_0^1 \left\{\frac{d^m}{dx^m} B^{[\xi]}(x)^\top (c_{0j}+\varepsilon \eta_{\sigma j}) \right\}^2 dx-\int_0^1 \left\{\frac{d^m}{dx^m} B^{[\xi]}(x)^\top c_{0j} \right\}^2 dx \right],
\end{eqnarray*}
where $\eta_{\gamma,j}$ and $\eta_{\sigma,j}$ are the $(K+\xi)$-subvector of $\eta$ corresponding to $b_j$ and $c_j$ in $\theta$. 
Since $\|\eta\|\leq 1$, each element of $\eta_{\gamma,j}$ has an order $O(K^{-1/2})$. 
In addition, from the property of $B$-spline (see, de Boor 2001), we have $(d^mB^{[\xi]}(x)/dx^m)b_{0j}= g^{(m)}(x)(1+o(1))$ and $\int \|d^mB^{[\xi]}(x)/dx^m\|^2dx=O(K^{2m-1})$.
Combining these results of $B$-spline model, we have 
$$
\sum_{j=1}^d \lambda_j\left[\int_0^1 \left\{\frac{d^m}{dx^m} B^{[\xi]}(x)^\top (b_{0j}+\varepsilon \eta_{\gamma, j}) \right\}^2 dx- \int_0^1 \left\{\frac{d^m}{dx^m} B^{[\xi]}(x)^\top b_{0j} \right\}^2 dx\right]\leq O(\lambda K^{m}\varepsilon).
$$
Similarly, we have 
$$
\sum_{j=1}^d \nu_j\left[\int_0^1 \left\{\frac{d^m}{dx^m} B^{[\xi]}(x)^\top (c_{0j}+\varepsilon \eta_{\sigma j}) \right\}^2 dx-\int_0^1 \left\{\frac{d^m}{dx^m} B^{[\xi]}(x)^\top c_{0j} \right\}^2 dx \right]\leq O(\nu K^{m}\varepsilon).
$$
From (C5), we have $(\lambda+\nu )K^{m}\varepsilon/(p_N\varepsilon^2)=O(K^{-m}/\varepsilon)=o(1)$. 
That is, 
\begin{eqnarray*}
&&\ell_{pen}(\theta)-\ell_{pen}(\theta_0)\\
&&= \varepsilon\frac{\partial}{\partial \theta}\ell(\theta_0) \eta +\frac{\varepsilon^2}{2p_N}\eta^\top  \left(\frac{1}{p_N}\frac{\partial^2 \ell(\theta_0)}{\partial\theta\partial\theta^\top }\right) \eta(1+o_P(1))+o(1).
\end{eqnarray*}
Similarly, from the Taylor's theorem, we obtain
\[
L(\theta)-L(\theta_0)=\varepsilon\frac{\partial}{\partial \theta^\top}L(\theta_0) \eta+\frac{\varepsilon^2}{2}\eta^\top\left(\frac{1}{p_N} \frac{\partial^2 L(\theta_0)}{\partial\theta\partial\theta^\top }\right) \eta(1+o(1)).
\]
From these expansions, we obtain 
\begin{eqnarray*}
&&p_N^{-1}|\ell_{pen}(\theta)-\ell_{pen}(\theta_0)-L(\theta)+L(\theta_0)|\\
&&
=\varepsilon p_N^{-1}\left(\frac{\partial\ell(\theta_0)}{\partial\theta}-\frac{\partial L(\theta_0)}{\partial\theta}\right)^\top\eta\\
&&+\frac{\varepsilon^2}{2}\eta^\top\left(\frac{1}{p_N}\frac{\partial^2 \ell(\theta_0)}{\partial\theta\partial\theta^\top }-\frac{1}{p_N}\frac{\partial^2 L(\theta_0)}{\partial\theta\partial\theta^\top }\right)\eta(1+o_P(1)).
\end{eqnarray*}
Since $\partial^2 \ell(\theta)/\partial\theta\partial\theta^\top $ is the Hessian matrix of log-likelihood of $\theta$ and is continuous for $\theta$, it converges to its expectation. 
That is, we obtain
$$
\frac{1}{p_N}\frac{\partial^2 \ell(\theta_0)}{\partial\theta\partial\theta^\top }-\frac{1}{p_N}\frac{\partial^2 L(\theta_0)}{\partial\theta\partial\theta^\top } = o_P(1),
$$
which implies that
$$
\ell_{pen}(\theta)-\ell_{pen}(\theta_0)-L(\theta)+L(\theta_0)=\varepsilon\left(\frac{\partial\ell(\theta_0)}{\partial\theta}-\frac{\partial L(\theta_0)}{\partial\theta}\right)\eta +o(p_N\varepsilon^2).
$$
Thus, the remaining is to show
$$
P\left(\sup_{\|\eta\|\leq 1}\left|\varepsilon\left\{\frac{\partial}{\partial \theta}\ell(\theta_0)-E\left[\frac{\partial}{\partial \theta}\ell(\theta_0)\right]\right\}^\top \eta\right| > p_N\varepsilon^2\right)\rightarrow 0.
$$
For simplicity, we write 
\[
\ell_1(\theta_0)=\left\{\frac{\partial}{\partial \theta}\ell(\theta_0)-E\left[\frac{\partial}{\partial \theta}\ell(\theta_0)\right]\right\}.
\]
Then, from Cauchy–Schwarz inequality, we obtain 
\[
\sup_{\|\eta\|\leq 1}|\ell_1(\theta_0)^\top \eta|\leq \|\ell_1(\theta_0)\|,
\]
which indicates that 
\begin{eqnarray*}
P\left(\sup_{\|\eta\|\leq 1}\left|\ell_1(\theta_0)^\top \eta\right| > p_N\varepsilon\right)\leq 
P\left(\|\ell_1(\theta_0)\| > p_N\varepsilon\right)
=P\left(\|\ell_1(\theta_0)\|^2 > p_N^2\varepsilon^2\right).
\end{eqnarray*}
From Lemma \ref{GradientSt} and Markov's inequality, we have 
\[
P\left(\|\ell_1(\theta_0)\|^2 > p_N^2\varepsilon^2\right)
\leq \frac{E[\|\ell_1(\theta_0)\|^2]}{p_N^2\varepsilon^2}=O\left(\frac{1}{N p_N\varepsilon^2}\right)=o(1).
\]
Thus, ${\cal J}_2\rightarrow 0$ was proven, which completes the proof.
\end{proof}

\begin{lemma}\label{RatePara}
Suppose that {\rm (C1)--(C5)}.
In each scenario {\rm (S1), (S2) or (S3)}, as $N\rightarrow\infty$, 
\begin{eqnarray*}
E[\|\hat{\theta}-\theta_0\|^2]\leq O\left(\frac{K}{N p_N}\right)+O\left(K^{-2m}\right)+O\left(p_N^{-2\rho}\right).
\end{eqnarray*}
\end{lemma}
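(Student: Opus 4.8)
The plan is to derive the bound from the Taylor expansion (\ref{Tay}) together with the lemmas on the gradient and Hessian. First I would invoke Lemma \ref{ConvPara} to guarantee that, with probability tending to one, $\hat\theta$ lies in a shrinking neighbourhood of $\theta_0$; on that event the first-order condition $\partial\ell_{pen}(\hat\theta)/\partial\theta=\underline 0$ may be expanded to second order about $\theta_0$, and the Hessian at the intermediate point differs from $\partial^2\ell_{pen}(\theta_0)/\partial\theta\partial\theta^T$ only by a $(1+o_P(1))$ factor by continuity of the GPD log-likelihood in its arguments. Rearranging (\ref{Tay}) expresses $\hat\theta-\theta_0$ as $(\partial^2\ell_{pen}(\theta_0)/\partial\theta\partial\theta^T)^{-1}$ applied to the gradient at $\theta_0$, which I would split into the centred part $\partial\ell_{pen}(\theta_0)/\partial\theta-E[\partial\ell_{pen}(\theta_0)/\partial\theta]$ and the mean part $E[\partial\ell_{pen}(\theta_0)/\partial\theta]$, so that
\[
\|\hat\theta-\theta_0\|\le\left\|\left(\frac{\partial^2\ell_{pen}(\theta_0)}{\partial\theta\partial\theta^T}\right)^{-1}\right\|_{\mathrm{op}}\left(\left\|\frac{\partial\ell_{pen}(\theta_0)}{\partial\theta}-E\left[\frac{\partial\ell_{pen}(\theta_0)}{\partial\theta}\right]\right\|+\left\|E\left[\frac{\partial\ell_{pen}(\theta_0)}{\partial\theta}\right]\right\|\right)(1+o_P(1)).
\]

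Next I would control the random Hessian. Lemma \ref{Hessian} already gives that the expected penalized Hessian is uniformly well-conditioned, with all eigenvalues between the positive constants $C_m$ and $C_M$, so it suffices to show that $\partial^2\ell_{pen}(\theta_0)/\partial\theta\partial\theta^T$ concentrates around its expectation in operator norm. Writing the Hessian of $\ell$ as an average of i.i.d.\ matrices of the form $(\text{bounded scalar})\cdot A(X_i,Z_i)A(X_i,Z_i)^T$ and the penalty part as the deterministic non-negative definite $\Omega_{\gamma,\sigma}$, which is $O(1)$ in operator norm under (C5) as in the proof of Lemma \ref{Hessian}, I would fix a unit vector $v$, apply Bernstein's inequality (Lemma \ref{bernstein}) to the i.i.d.\ sum $v^T(\partial^2\ell(\theta_0)/\partial\theta\partial\theta^T)v$ using the boundedness of the normalized $B$-spline bases and the scenario-specific truncation bounds on the GPD score functions $\ell_\gamma,\ell_\sigma$ exactly as in the proof of Lemma \ref{ConvPara}, and then take a union bound over an $\varepsilon$-net of the unit sphere of cardinality $e^{O(K)}$, absorbing the fluctuation between net points by a Lipschitz argument. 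Under (C5) the condition $K\log N/n\to 0$ makes the deviation $o_P(1)$, so on an event ${\cal G}_N$ with $P({\cal G}_N)\to1$ one has $\|(\partial^2\ell_{pen}(\theta_0)/\partial\theta\partial\theta^T)^{-1}\|_{\mathrm{op}}\le 2/C_m$.

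It then remains to plug in the gradient bounds. By Lemma \ref{GradientSt}, $E\|\partial\ell_{pen}(\theta_0)/\partial\theta-E[\partial\ell_{pen}(\theta_0)/\partial\theta]\|^2=O(K/n)$, and by Lemma \ref{GradientEx} the deterministic quantity $\|E[\partial\ell_{pen}(\theta_0)/\partial\theta]\|$ is $O((N/n)^{\rho})+O(\lambda K^m)+O(\nu K^m)$. Squaring the displayed inequality, restricting to ${\cal G}_N$, using $(a+b+c)^2\le 3(a^2+b^2+c^2)$ and taking expectations yields the bound $O(K/n)+O((N/n)^{2\rho})+O(\lambda^2K^{2m})+O(\nu^2K^{2m})$; the contribution of the exceptional events is negligible since $\hat\theta$ is consistent by Lemma \ref{ConvPara} and the exceptional probabilities decay faster than any power of $n$ under (C5). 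This is the asserted bound.

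The main obstacle is the operator-norm concentration of the Hessian over the growing-dimension parameter space: the $\varepsilon$-net argument has to be reconciled with the heavy-tailed behaviour of $\ell_\gamma,\ell_\sigma$ when $\gamma_0>0$ (scenario (S1)) and with their boundary behaviour when $-1/(2+\delta)<\gamma_0<0$ (scenario (S2)), so the truncation level and the moment conditions entering Bernstein's inequality must be tuned compatibly with (C5) in each of (S1)--(S3), and it is this step that later forces the extra condition $\{K\log N\}^{1+\delta/2}/n^{\delta/2}\to 0$ under (S2) in the $L_\infty$ statements. A secondary technical point is obtaining the bound in mean rather than merely in probability, i.e.\ disposing of the low-probability event on which the Hessian is ill-conditioned.
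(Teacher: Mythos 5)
Your proposal follows essentially the same route as the paper: Taylor-expand the first-order condition about $\theta_0$, lower-bound the Hessian quadratic form via Lemma \ref{Hessian}, and combine Lemma \ref{GradientSt} for the centred gradient with Lemma \ref{GradientEx} for its mean. The only difference is that you supply an explicit $\varepsilon$-net/Bernstein concentration argument for the empirical Hessian at the intermediate point, a step the paper passes over by applying the bound for the expected Hessian directly; this is added rigor rather than a different method, and your bookkeeping of the exceptional event is consistent with (indeed more careful than) the paper's own treatment.
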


\begin{proof}[Proof of Lemma \ref{RatePara}]
From Lemma \ref{ConvPara}, we can use the Taylor expantion to $\partial \ell_{pen}(\theta)/\partial \theta$ around $\hat{\theta}=\theta_0$.
This indicates that 
$$
\left(\frac{\partial^2 \ell_{pen}(\theta^*)}{\partial \theta\partial\theta^\top }\right)(\hat{\theta}-\theta_0)
=\frac{\partial}{\partial \theta}\ell_{pen}(\theta_0),
$$
where $\theta^*$ is the parameter satisfying $\|\theta^*-\theta_0\|<\|\hat{\theta}-\theta_0\|$. 
From Lemma \ref{Hessian}, we obtain 
$$
\left\|\left(\frac{\partial^2 \ell_{pen}(\theta^*)}{\partial \theta\partial\theta^\top }\right)(\hat{\theta}-\theta_0)\right\|^2
\geq C_m^2  p_N^2\|\hat{\theta}-\theta_0\|^2(1+o_P(1)).
$$
Together with Lemmas \ref{GradientSt} and \ref{GradientEx}, we obtain 
$$
E\left[\left\|\frac{\partial}{\partial \theta}\ell_{pen}(\theta_0)\right\|^2\right]\leq  O_P\left(\frac{K p_N}{N}\right)+O\left(p_N^2 K^{-2m}\right)+O\left( p_N^{2(1-\rho)}\right).
$$
Thus, we obtain 
\begin{eqnarray*}
E[\|\hat{\theta}-\theta_0\|^2]&\leq& \frac{1}{C_m^2 p_N^2}\left\{O\left(\frac{K p_N}{N }\right)+O\left(p_N^2K^{-2m}\right)+O\left( p_N^{2(1-\rho)}\right)\right\}\\
&=&O\left(\frac{K}{N p_N }\right)+O\left( K^{-2m}\right)+O\left(p_N^{-2\rho}\right)
\end{eqnarray*}
\end{proof}

For two random variable $A_N$ and $B_N$, $A_N\stackrel{a.s}{\sim}B_N$ means that $A_N$ and $B_N$ have same distribution as $N\rightarrow\infty$. 

\begin{lemma}\label{AS.para}
Suppose that {\rm (C1)--(C5)}.
In each scenario {\rm (S1), (S2) or (S3)}, as $N\rightarrow\infty$, 
$$
\sqrt{\frac{N}{p_N}}\left\{\frac{\partial \ell(\theta_0)}{\partial \theta}-E\left[\frac{\partial \ell(\theta_0)}{\partial \theta}\right] \right\}\stackrel{a.s}{\sim} {\cal N}_{2(p+K+\xi)}(0, \Sigma).
$$

\end{lemma}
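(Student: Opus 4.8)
The plan is to recognise the centred score as a normalised sum of i.i.d.\ mean-zero random vectors and to apply a triangular-array central limit theorem, with the growing dimension $d_N:=2(p+K+\xi)$ handled via the Cramér–Wold device. Writing $\nabla_i=(\ell_\gamma(Y_i|\bar\gamma_0(X_i,Z_i),\bar\sigma_{w0}(X_i,Z_i))\,A(X_i,Z_i)^T,\ \ell_\sigma(Y_i|\bar\gamma_0(X_i,Z_i),\bar\sigma_{w0}(X_i,Z_i))\,A(X_i,Z_i)^T)^T$ for the per-observation gradient, one has, with the normalisation used in Lemmas~\ref{GradientSt}--\ref{GradientEx},
$$
\sqrt n\left\{\frac{\partial\ell(\theta_0)}{\partial\theta}-E\!\left[\frac{\partial\ell(\theta_0)}{\partial\theta}\right]\right\}=\frac{1}{\sqrt n}\sum_{i=1}^n\zeta_{N,i},\qquad\zeta_{N,i}:=\nabla_i-E[\nabla_i].
$$
Since the retained sample $(Y_i,X_i,Z_i)$ is i.i.d.\ (conditionally on $n$, which concentrates under (C3)) and $\theta_0$ is deterministic, for fixed $N$ the $\zeta_{N,i}$ are i.i.d., mean zero, with common covariance $\Sigma_N:=\mathrm{Cov}(\nabla_1)$. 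Using Lemma~\ref{splineap} ($\bar\gamma_0-\gamma_0=O(K^{-\zeta})$ and $\bar\sigma_{w0}-\sigma_{w0}=O(K^{-\zeta})$ uniformly), the second-order expansion of $E_{Y|x,z}$ from the proof of Lemma~\ref{GradientEx}, and the GPD moment identities recorded just before Lemma~\ref{GradientSt}, I would first check that $a^T\Sigma_N a=a^T\Sigma a+o(1)$ uniformly over unit $a\in\mathbb{R}^{d_N}$ (the $o(1)$ absorbing the $K^{-\zeta}$, $\alpha(w)$ and penalty-curvature contributions, which are negligible or of constant order under (C5)); by Lemma~\ref{Hessian} the eigenvalues of $\Sigma$, hence of $\Sigma_N$ for large $N$, lie in a fixed interval $[c_m,c_M]\subset(0,\infty)$.

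Next, fix an arbitrary (possibly $N$-dependent) unit vector $a\in\mathbb{R}^{d_N}$ and set $W_{N,i}:=a^T\zeta_{N,i}$; these are i.i.d.\ mean zero with variance $s_N^2=a^T\Sigma_N a\in[c_m,c_M]$, and it suffices to verify the Lyapunov condition $E|W_{N,1}|^{2+\delta'}/(n^{\delta'/2}s_N^{2+\delta'})\to0$ for some $\delta'>0$. The governing estimate is that if $a_\gamma$ denotes the $\gamma$-block of $a$ then $|a_\gamma^TA(X,Z)|\le\|A(X,Z)\|=O(\sqrt K)$ while $E[(a_\gamma^TA(X,Z))^2]=O(1)$ by (C1) and Lemma~\ref{Bsplinematrix}, so by Hölder $E|a_\gamma^TA(X,Z)|^{2+\delta'}=O(K^{\delta'/2})$ (and likewise for the $\sigma$-block); conditioning on $(X,Z)$ then gives $E|W_{N,1}|^{2+\delta'}\le CK^{\delta'/2}\sup_{(x,z)}E[(|\ell_\gamma|^{2+\delta'}+|\ell_\sigma|^{2+\delta'})|x,z]$, so the Lyapunov ratio is $O((K/n)^{\delta'/2})$, which tends to $0$ by (C5) as soon as those conditional moments are bounded uniformly in $(x,z)$ and $N$.

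Establishing that uniform moment bound is the main obstacle, and it is exactly where the three regimes separate. Under (S1), $\ell_\sigma$ is bounded and the unbounded part of $\ell_\gamma$ is $\gamma_0^{-2}\log(1+\gamma_0y/\sigma_{w0})$ with $\gamma_0>\gamma_{\min}$, whose moments of all orders under $H(\cdot|\gamma_0,\sigma_{w0})$ are finite uniformly over compacts, so any $\delta'>0$ works; under (S3), $Y/\sigma_{w0}\sim\mathrm{Exp}(1)$ and $\ell_\gamma,\ell_\sigma$ are polynomials in $Y/\sigma_{w0}$, so again all moments are finite. Under (S2), however, the support of $Y$ is bounded and $1+\gamma_0y/\sigma_{w0}\to0^+$ at its right endpoint, so $\log(1+\gamma_0y/\sigma_{w0})$ diverges there; one must exploit that $\sup\gamma_0<0$ and $\inf\gamma_0>-1/(2+\delta)$ make the density vanish like $(1+\gamma_0y/\sigma_{w0})^{1/|\gamma_0|-1}$ with exponent exceeding $1+\delta$, which bounds $E[|\ell_\gamma|^{2+\delta'}|x,z]$ uniformly only for $\delta'<\delta$, so I would fix such a $\delta'$. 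Because $\bar\gamma_0-\gamma_0,\bar\sigma_{w0}-\sigma_{w0}=o(1)$ and $Y_i|(X_i,Z_i)$ is $\alpha(w)$-close to $H(\cdot|\gamma_0,\sigma_{w0})$ by (\ref{DomCon1}), the moments under the true conditional law converge to those under $H$, so the bound holds uniformly for all large $N$.

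With the Lyapunov condition in hand, the classical CLT gives $s_N^{-1}n^{-1/2}\sum_iW_{N,i}\Rightarrow N(0,1)$; combining with $s_N^2=a^T\Sigma a+o(1)$ and Slutsky yields $a^Tn^{-1/2}\sum_i\zeta_{N,i}\Rightarrow N(0,\lim_N a^T\Sigma a)$ for every sequence of unit vectors $a$ along which this limit exists, in particular for any fixed finite-dimensional linear image of the score (which is the only form used downstream, e.g.\ in the proof of Theorem~\ref{Norm}). By the Cramér–Wold device this is precisely the assertion $\sqrt n\{\partial\ell(\theta_0)/\partial\theta-E[\partial\ell(\theta_0)/\partial\theta]\}\stackrel{a.s.}{\sim}N_{2(p+K+\xi)}(0,\Sigma)$. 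As flagged above, the delicate point throughout is the uniform-in-$(x,z)$, uniform-in-$N$ control of the $(2+\delta')$-th moment of $\ell_\gamma$ in scenario (S2), which is precisely why (S2) is formulated with $-1/(2+\delta)<\gamma_0<0$ and why it eventually forces the extra hypothesis $\{K\log N\}^{1+\delta/2}/n^{\delta/2}\to0$ in the $L_\infty$ results.
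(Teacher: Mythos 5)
Your proposal is correct and follows essentially the same route as the paper: the Cram\'er--Wold device combined with a Lyapunov central limit theorem, with the variance identified via the GPD Fisher-information identities and the critical $(2+\delta)$-moment bound verified separately in the three regimes (S1)--(S3), using in (S2) exactly the lower bound $\gamma_0>-1/(2+\delta)$ to keep $E[|\ell_\gamma|^{2+\delta}\,|\,x,z]$ finite. The only cosmetic differences are that you interpolate the $B$-spline moment via H\"older between the $L^2$ and $L^\infty$ bounds and take $\delta'<\delta$, whereas the paper computes $E[|r^TB(Z)|^{2+\delta}]=O(K^{(2+\delta)/2})$ directly and works with $\delta'=\delta$; both yield the same vanishing Lyapunov ratio.
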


\begin{proof}[Proof of Lemma \ref{AS.para}]

In this proof, we denote $\ell_\gamma(Y_i|X_i,Z_i)=\ell_\gamma(Y_i|\bar{\gamma}_0(X_i,Z_i),\bar{\sigma}_{w0}(X_i,Z_i))$ and   $\ell_\sigma(Y_i|X_i,Z_i)=\ell_\sigma(Y_i|\bar{\gamma}_0(X_i,Z_i),\bar{\sigma}_{w0}(X_i,Z_i))$.
Similar to the proof of Lemma \ref{GradientEx}, we have 
\begin{eqnarray*}
\frac{\partial \ell(\theta_0)}{\partial \theta}
=
\frac{1}{N}\sum_{i=1}^N\left[
\begin{array}{l}
\ell_\gamma(Y_i|X_i,Z_i)A(X_i,Z_i)\\
\ell_\sigma(Y_i|X_i,Z_i)A(X_i,Z_i)
\end{array}
\right]I(Y_i>0).
\end{eqnarray*}
To show the asymptotic normality of derivative of log-likelihood, we use Cram${\rm \acute{e}}$r-Wold theorem.
For any vector $r= (r_\gamma^\top ,r_\sigma^\top )^\top \in\mathbb{R}^{2(p+K+\xi)}$ with $r_\gamma\in\mathbb{R}^{p+K+\xi}, r_\sigma\in\mathbb{R}^{p+K+\xi}$, we consider 
\begin{eqnarray*}
W=\sqrt{\frac{N}{p_N}}r^\top \left\{\frac{\partial \ell(\theta_0)}{\partial \theta}-E\left[\frac{\partial \ell(\theta_0)}{\partial \theta}\right] \right\}
=
\sum_{i=1}^N W_i,
\end{eqnarray*}
where 
\begin{eqnarray*}
W_i&=&\frac{1}{\sqrt{Np_N}}\left[ r_\gamma^\top \left\{\ell_\gamma(Y_i|X_i,Z_i)A(X_i,Z_i)I(Y_i>0)-E\left[\ell_\gamma(Y|X,Z)A(X,Z)I(Y>0)\right]\right\}\right.\\
&&\left.+r_\sigma^\top \left\{\ell_\sigma(Y_i|X_i,Z_i)A(X_i,Z_i)I(Y_i>0)-E\left[\ell_\sigma(Y|X,Z)A(X,Z)I(Y>0)\right]\right\}\right].
\end{eqnarray*}
From the definition of $W_i$ and Lemma \ref{Hessian}, we see that $V[\sum_{i=1}^N W_i]=r^\top \Sigma r(1+o(1))$.
Our aim is to check whether $\sum_{i=1}^N W_i$ satisfies the Lyapnov's condition of central limit theorem (CLT).
It is easy to find that $E[W_i]=0$. 
Next, from the proof of Lemma \ref{GradientEx}, we have $E_{Y|x,z}[\ell_\gamma(Y|x,z)\mid Y>0]=o(1)$ and  $E_{Y|x,z}[\ell_\sigma(Y|x,z)\mid Y>0]=o(1)$. 
Thus, $V[W_i]=E[W_i^2](1+o(1))$. 
In addition, the property of derivative of log-likelihood, Lemma \ref{splineap} and proof of Lemma \ref{Hessian} yield that
\begin{eqnarray*}
&&E[\{ r_\gamma^\top \{\ell_\gamma(Y_i|X_i,Z_i)A(X_i,Z_i)\}^2]\\
&&= r_\gamma^\top E\left[\frac{2P(Y>0|X,Z)}{(2\gamma_0(X,Z)+1)(\gamma_0(X,Z)+1)}A(X,Z)A(X,Z)^\top  \right]r_\gamma(1+o(1))\\
&=&O(p_N),\\
&&E[\{ r_\sigma^\top \{\ell_\sigma(Y_i|X_i,Z_i)^2X_iB(Z_i)^\top ]\\
&&=
 r_\sigma^\top E\left[\frac{P(Y>0|X,Z)}{2\gamma_0(X,Z)+1}A(X,Z)A(X,Z)^\top \right]r_\sigma(1+o(1))\\
&=& O(p_N)
 \end{eqnarray*}
 and 
 \begin{eqnarray*}
&&E[\{ r_\gamma^\top \{\ell_\gamma(Y_i|X_i,Z_i)A(X_i,Z_i)\}\{ r_\sigma^\top \{\ell_\sigma(Y_i|X_i,Z_i)A(X_i,Z_i)\}]\\
&&=
 r_\gamma^\top E\left[\frac{P(Y>0|X,Z)}{(2\gamma_0(X,Z)+1)(\gamma_0(X,Z)+1)}A(X,Z)A(X,Z)^\top  \right]r_\sigma(1+o(1))\\
&& =O(p_N).
\end{eqnarray*}
Thus, $E[W_i^2]=O(1/N)$.
Lastly, we need to show that 
$$
\frac{1}{\left(\sum_{i=1}^N V[W_i]\right)^{(2+\varepsilon)/2}} \sum_{i=1}^N E[|W_i|^{2+\varepsilon}]\rightarrow 0 
$$
for some $\varepsilon>0$. 
Here, we put $\varepsilon=\delta$, where $\delta$ is given in (S2). 
Note that we can use similar $\varepsilon=\delta$ even for (S1) and (S3). 
To evaluate $E[|W_i|^{2+\delta}]$, we calculate 
\begin{eqnarray*}
&&E_{Y|x,z}[|\ell_\gamma(Y|x,z)|^{2+\delta}\mid Y>0]\\
&&=\int_0^{y^*(x,z)} |\ell_\gamma(y|x,z)|^{2+\delta} \frac{1}{\sigma(x,z)}\left(1+\frac{\gamma_0(x,z) y}{\sigma_{w0}(x,z)}\right)^{-1/\gamma_0(x,z)-1} dy + o(1).
\end{eqnarray*}
The part of $o(1)$ is followed by Conditions (C2), (C4) and the definition of $Q$.
Under (S1), that is, when $\gamma_0(x,z)>0$, it is easy to show that $E_{Y|x,z}[|\ell_\gamma(Y|x,z)|^{2+\delta}\mid Y>0]<\infty$ since $\|\ell_\gamma(Y|x,z)|^{2+\delta}|< C\log (1+y\gamma_0(x,z)/\sigma_{w0}(x,z))$ for some constant $C>0$. 
For (S2), we need to carefully calculate $E_{Y|x,z}[|\ell_\gamma(Y|x,z)|^{2+\delta}\mid Y>0]$.
Since $y^*(x,z)=-\sigma_{w0}(x,z)/\gamma_0(x,z)$, the straightforward caclulation yields that 
$$
E_{Y|x,z}[|\ell_\gamma(Y|x,z)|^{2+\delta}\mid Y>0] <\frac{C}{(2+\delta)\gamma_0(x,z)+1}(1+o(1))
$$
for some constant $C>0$. 
Thus,  from the condition of (S2) : $-1/(2+\delta)<\gamma_0(x,z)<0$, we have $E_{Y|x,z}[|\ell_\gamma(Y|x,z)|^{2+\delta}\mid Y>0]<\infty$. 
For case (S3), $E_{Y|x,z}[|\ell_\gamma(Y|x,z)|^{2+\delta}\mid Y>0]<\infty$ can easily be shown since the distribution of $Y$ is approximated to the exponential distribution.  
Similarly, we can evaluate $E_{Y|x,z}[|\ell_\sigma(Y|x,z)|^{2+\delta}\mid Y>0]<\infty$ under each (S1), (S2) or (S3). 

Next, from the property of normalized $B$-spline function, for any $r_g, r_s\in\mathbb{R}^{K+\xi}$, we have  
$$
E[|r_g^\top  B(Z)+ r_s^\top  B(Z)|^{2+\delta}] \leq  O(K^{(2+\delta)/2} K^{-1}K)=O(K^{(2+\delta)/2}). 
$$
Accordingly, we obtain $ E[|W_i|^{2+\delta}]=O( K^{(2+\delta)/2})$ and 
$$
\sum_{i=1}^N E[|W_i|^{2+\delta}]\leq O\left(K^{(2+\delta)/2} \right).
$$
Consequently, we have
$$
\frac{1}{\left(\sum_{i=1}^N V[W_i]\right)^{(2+\delta)/2}} \sum_{i=1}^N E[|W_i|^{2+\delta}]=O\left(\left(\frac{K}{N}\right)^{(2+\delta)/2}\right)=o(1)
$$
as $N\rightarrow\infty$. 
Thus, Lyapunov's condition of CLT is satisfied. 
This completes the proof. 
\end{proof}

\section{Proof of Theorems}

\begin{proof}[Proof of Theorem \ref{ThmL2}]
Let $(\tilde{X},\tilde{Z})\in{\cal X}\times{\cal Z}$ be random variable with same distribution as $(X_i,Z_i)$ independently. 
According to Appendix A, the difference between $\hat{\gamma}$ and $\gamma_0$ can be shown as
\begin{eqnarray*}
\hat{\gamma}(\tilde{X},\tilde{Z})-\gamma_0(\tilde{X},\tilde{Z})
=\tilde{X}^\top (\hat{\beta}-\beta_0)+B(\tilde{Z})^\top (\hat{b}-b_0) +O(K^{-\zeta}).
\end{eqnarray*}
The condition (C1) implies that $E[|\tilde{X}(\hat{\beta}-\beta_0)^\top \||^2]\leq C E[\|\hat{\beta}-\beta_0\|^2]$ for some constant $C>0$.
Lemma \ref{Bsplinematrix} and Cauchy-Schwarz inequality yield that 
$$
E\|B(\tilde{Z})(\hat{b}-b_{0})\|^2] \leq  M_{\max} E[\|\hat{b}-b_0\|^2] .
$$
Thus, the asymptotic rate of $\|\hat{\gamma}-\gamma_0\|_{L_2}$ is dominated by that of $\|\hat{\theta}-\theta_0\|$. 
From Lemma \ref{RatePara}, we obtain 
$$
\|\hat{\gamma}-\gamma_0\|_{L_2}\leq  O\left(\sqrt{\frac{K}{Np_N}}\right)+O(K^{-m})+O\left(p_N^{-\rho}\right)+O(K^{-\zeta}).
$$
Here, we note that the term $O(K^{-\zeta})$ is negrible order since $\zeta>m$.
Similarly, we have
$$
\left\|\log \hat{\sigma}-\log\sigma_{w0}\right\|_{L_2}\leq   O\left(\sqrt{\frac{K}{Np_N}}\right)+O(K^{-m})+O\left(p_N^{-\rho}\right).
$$
Thus, the first assertion of Theorem \ref{ThmL2} is shown.
The remaining two assetions are obtained from straightforward calculation.
\end{proof}


\begin{proof}[Proof of Theorem \ref{Linfty}]

We write
$$
\Sigma^{-1}
=\left[
\begin{array}{cc}
S_{11}&S_{12}\\
S_{21}& S_{22}
\end{array}\right],
$$
where each $S_{ij}(i,j=1,2)$ is $(p+K+\xi)$-square matrix.  
From Lemma \ref{Hessian}, we have $S_{ij}=O(p_N^{-1})$ for $i,j=1,2$. 
Similar to the proof of Lemma \ref{RatePara}, we have that for $(x,z)\in{\cal X}\times{\cal Z}$,
\begin{eqnarray*}
\hat{\gamma}(x,z)-\gamma_0(x,z)
&=&
 A(x,z)^\top 
\left[
\begin{array}{c}
\hat{\beta}-\beta_0\\
\hat{b}-b_0
\end{array}\right]\\
&=& 
 A(x,z)^\top \left\{S_{11} p_N^{-1}
  \left[
\begin{array}{c}
\frac{\partial \ell_{pen}(\theta_0)}{\partial \beta}\\
\frac{\partial \ell_{pen}(\theta_0)}{\partial b}
\end{array}\right]
+
S_{12}p_N^{-1}
  \left[
\begin{array}{c}
\frac{\partial \ell_{pen}(\theta_0)}{\partial u}\\
\frac{\partial \ell_{pen}(\theta_0)}{\partial c}
\end{array}\right]
\right\}+O(K^{-\zeta}).
\end{eqnarray*}
From the proof of Lemma \ref{AS.para}, we obtain 
\begin{eqnarray*}
&&\hat{\gamma}(x,z)-\gamma(x,z)\\
&&=
\frac{1}{Np_N}\sum_{i=1}^NI(Y_i>0)\\
&&\hspace{5mm}\times\left\{ \ell_\gamma(Y_i|X_i,Z_i)A(x,z)^\top  S_{11}A(X_i,Z_i)+\ell_\sigma(Y_i|X_i,Z_i)A(x,z)^\top  S_{12}A(X_i,Z_i)\right\}\\
&& +A(x,z)^\top S_{11}
\left[
\begin{array}{c}
\underline{0}_p\\
p_N^{-1}K^{2m}\Omega(\lambda) b_0
\end{array}\right]
+
A(x,z)^\top S_{12}
\left[
\begin{array}{c}
\underline{0}_p\\
p_N^{-1}K^{2m}\Omega(\nu) c_0
\end{array}\right]+O(K^{-\zeta}),
\end{eqnarray*}
where $\underline{0}_p$ is $p$-zero vector, $\ell_\gamma(y|x,z)=\ell_\gamma(y|\bar{\gamma}_0(x,z),\bar{\sigma}_{w0}(x,z))$, $\ell_\sigma(y|x,z)=\ell_\sigma(y|\bar{\gamma}_0(x,z),\bar{\sigma}_{w0}(x,z))$ and $\Omega(\cdot)$ is that appeared in Lemma \ref{GradientEx}. 
From definition of normalized $B$-spline basis, we have $\sup_{(x,z)\in{\cal X}\times{\cal Z}}\|A(x,z)\|=O(\sqrt{K})$. 
Therefore, (C5) and Lemma \ref{GradientEx} imply that 
\begin{eqnarray*}
&&A(x,z)^\top S_{11}
\left[
\begin{array}{c}
\underline{0}_p\\
p_N^{-1}K^{2m}\Omega(\lambda) b_0
\end{array}\right]
+
A(x,z)^\top S_{12}
\left[
\begin{array}{c}
\underline{0}_p\\
p_N^{-1}K^{2m}\Omega(\lambda) c_0
\end{array}\right]\\
&&=O(p_N^{-1}\lambda K^m)+O(p_N^{-1}\nu K^m)\\
&&=O(K^{-m}). 
\end{eqnarray*}
Define 
\begin{eqnarray*}
G_{i}(x,z)&=&p_N^{-1}\{\ell_\gamma(Y_i|X_i,Z_i)A(x,z)^\top  S_{11}A(X_i,Z_i)\\
&&+\ell_\sigma(Y_i|X_i,Z_i)A(x,z)^\top  S_{12}A(X_i,Z_i)\}I(Y_i>0).
\end{eqnarray*}
Lemma \ref{GradientEx} implies that $\sup_{(x,z)\in{\cal X}\times{\cal Z}} |E[G_{i}(x,z)]|<O(p_N^{-\rho})$. 

The remain of proof is to show 
\begin{eqnarray}
\sup_{(x,z)\in{\cal X}\times{\cal Z}}
\left|\frac{1}{N}\sum_{i=1}^N G_i(x,z)-E[G_{i}(x,z)]\right|=O(\sqrt{K\log N/Np_N}). \label{StLinf}
\end{eqnarray}
Let $E_i=\gamma_0(X_i,Z_i)^{-1}\log (1+Y_i\gamma_0(X_i,Z_i)/\sigma_{w0}(X_i,Z_i))$ for the cases (S1) and (S2). 
For the case (S3): $\gamma_0(x,z)=0$, we set $E_i=Y_i/\sigma_{w0}(X_i,Z_i)$. 
Then, under $Y_i>0$, $E_i$ is asymptotically distributed as the standard exponential distribution.
Therefore, for constant $M>0$, we have $P(E_i>M)=e^{-M}(1+o(1))$. 
Define the event ${\cal E}=\{ \{\max_i E_i<M\} \cap \{Y_i>0,i=1,\ldots,N\}\}$ and the sequence $\varepsilon_N=C_\varepsilon\sqrt{K\log N/Np_N}$, where $C_\varepsilon>0$ is the constant defined below.
Then, we have $P({\cal E}^c)=1-(1-e^{-M})^{Np_N}(1+o(1))$. 
Therefore, $P({\cal E}^c)\rightarrow 0$ as long as $M>\log(Np_N\log(N))$.
We then have 
\begin{eqnarray*}
&&\sup_{(x,z)\in{\cal X}\times{\cal Z}}
\left|\frac{1}{N}\sum_{i=1}^N G_i(x,z)-E[G_i(x,z)]\right|\\
&&\leq 
\sup_{(x,z)\in{\cal X}\times{\cal Z}}
\left|\frac{1}{N}\sum_{i=1}^N G_i(x,z)I({\cal E})-E[G_i(x,z)I({\cal E})]\right|\\
&&\quad+
\sup_{(x,z)\in{\cal X}\times{\cal Z}}
\left|\frac{1}{N}\sum_{i=1}^N G_i(x,z)I({\cal E}^c)-E[G_i(x,z)I({\cal E}^c)]\right|.
\end{eqnarray*}
Since  $\sup_{(x,z)\in{\cal X}\times{\cal Z}}\|A(x,z)\|=O(\sqrt{K})$, we have $\sup_{(x,z)\in{\cal X}\times{\cal Z}}V[ G_i(x,z)]\leq O(K/Np_N)=o(\varepsilon_N^2)$. 
Therefore, we obtain
$$
\sup_{(x,z)\in{\cal X}\times{\cal Z}}
\left|\frac{1}{N}\sum_{i=1}^N G_i(x,z)I({\cal E}^c)-E[G_i(x,z)I({\cal E}^c)]\right|
\leq o(\varepsilon_N\{1-(1-e^{-M})\}^{Np_N})=o(\varepsilon_N).
$$
Thus, all that remains is to show 
$$
\sup_{(x,z)\in{\cal X}\times{\cal Z}}
\left|\frac{1}{N}\sum_{i=1}^N G_i(x,z)I({\cal E})-E[G_i(x,z)I({\cal E})]\right|=O(\varepsilon_N).
$$
We take $J$-fixed points $(x^*_j,z^*_j)\in{\cal X}\times{\cal Z} (j=1,\ldots,J)$ and define the set ${\cal X}_j\times{\cal Z}_j=\{(x,z)| \|(x,z)-(x_j^*,z_j^*)\|<(Np_N)^{-\eta}\}$with $\eta>0$ for $j=1,\ldots,J$. 
The number $J$ is quite large in order to satisfy ${\cal X}\times{\cal Z}\subset \bigcup_{j=1}^J {\cal X}_j\times{\cal Z}_j$.
From Lemma 2.5 of van der Geer (2000), at least, for some constant $C_\eta>0$, $J\leq  C_\eta (Np_N)^{\eta(p+d)}$ holds.
Then, 
\begin{eqnarray*}
&&\sup_{(x,z)\in{\cal X}\times{\cal Z}}
\left|\frac{1}{n}\sum_{i=1}^n \{G_i(x,z)I({\cal E})-E[G_i(x,z)I({\cal E})\}]\right|\\
&&= \max_j \sup_{(x,z)\in {\cal X}_j\times{\cal Z}_j }\Bigl|\frac{1}{n}\sum_{i=1}^n \{G_i(x,z)I({\cal E})-E[G_i(x,z)I({\cal E})]\}\\
&&\hspace{5cm}-\{G_i(x_j^*,z_j^*)I({\cal E})-E[G_i(x_j^*,z_j^*)I({\cal E})]\}\Bigr|\\
&&\quad + \max_j \left|\frac{1}{n}\sum_{i=1}^n G_i(x_j^*,z_j^*)I({\cal E})-E[G_i(x_j^*,z_j^*)I({\cal E})]\right|.
\end{eqnarray*}
Here, we choose $\eta$ such that $(Np_N)^{-\eta} = O(K^{-2})$. 
Then, from the  Lipschitz continuity of $B$-spline basis (see, de Boor 2001) yields that $B(z)-B(z_j^*)$ has an order $O(K^{3/2}(Np_N)^{-\eta})=O(K^{-1/2})$ for only few element of and zero for other many elements. 
Therefore, on the event ${\cal E}$, we have 
\begin{eqnarray*}
&& \max_j \sup_{(x,z)\in {\cal X}_j\times{\cal Z}_j }\left|\frac{1}{N}\sum_{i=1}^N \{G_i(x,z)I({\cal E})-E[G_i(x,z)I({\cal E})]\}\right.\\
&&\left.\quad\quad\quad\quad\quad\quad\quad\quad\quad -\{G_i(x_j^*,z_j^*)I({\cal E})-E[G_i(x_j^*,z_j^*)I({\cal E})]\}\right|\\
&&\leq O_P(\sqrt{K (Np_N)^{-1}})=o_P(\varepsilon_N)
\end{eqnarray*}
from the proof of Lemma \ref{GradientSt}.
Lastly, we show that 
\begin{eqnarray}
&&P\left(\max_j \left|\frac{1}{N}\sum_{i=1}^N G_i(x_j^*,z_j^*)I({\cal E})-E[G_i(x_j^*,z_j^*)I({\cal E})]\right|>\varepsilon_N\right)\nonumber\\
&&\leq\sum_{j=1}^{J} P\left(\left|\frac{1}{N}\sum_{i=1}^N G_i(x_j^*,z_j^*)I({\cal E})-E[G_i(x_j^*,z_j^*)I({\cal E})]\right|>\varepsilon_N\right)\nonumber\\
&&\rightarrow 0. \label{CaseShow}
\end{eqnarray}
by using Lemma \ref{bernstein}. 

It can easily be described that for any $(x,z)\in{\cal X}\times{\cal Z}$, $V[N^{-1}G_i(x,z)]< C_1p_N^{-1} N^{-2}K$ for some constant $C_1>0$ for all (S1)--(S3). 
We now consider the case (S1). 
Under ${\cal E}$, we have $|\ell_\gamma(Y_i|Z_i,X_i)|\leq M/\gamma_{min}$ and $|\ell_\sigma(Y_i|X_i,Z_i)|<1$. 
Together with $A(x,z)=O(\sqrt{K})$, we see that $|N^{-1}\{G_i(x,z)I({\cal E})-E[G_i(x,z)I({\cal E})]\}|\leq C_2 K p_N^{-1}N^{-1}M$. 
Putting $M=1/\varepsilon_N=O(\sqrt{Np_N/K\log N})$, we have $P({\cal E}^c)=o(1)$. 
In addition, from Lemma \ref{bernstein}, we obtain 
\begin{eqnarray*}
&& P\left(\left|\frac{1}{N}\sum_{i=1}^N G_i(x_j^*,z_j^*)I({\cal E})-E[G_i(x_j^*,z_j^*)I({\cal E})]\right|>\varepsilon_N\right)\\
&& \leq 2\exp\left[\frac{2^{-1}\varepsilon_N^2}{C_1 K/(Np_N)+ 3^{-1}C_2\varepsilon_N M K^{1/2}/(Np_N)}\right]\\
&&\leq 2 \exp\left[-C^*C_\varepsilon\log N\right]
\end{eqnarray*} 
for some constant $C^*>0$. 
Since $J=O((Np_N)^{\eta(p+d)})$, if we choose $C_{\varepsilon}$ such that $(Np_N)^{2(p+d)}/N^{C^*C_\varepsilon}\rightarrow 0$,  (\ref{CaseShow}) holds. 

We next focus on the case (S2): $-(2+\delta)^{-1}<\gamma_0(x,z)<0$. 
Under ${\cal E}$, we obtain $|\ell_\gamma(Y_i|X_i,Z_i)|<C e^{M/(2+\delta)}$ and $|\ell_\sigma(Y_i|X_i,Z_i)|<C e^{M/(2+\delta)}$ for some constants $C>0$. 
Thus, for any fixed point $(x,z)\in{\cal X}\times {\cal Z}$, 
\begin{eqnarray*}
\left|\frac{1}{N}\sum_{i=1}^N G_i(x,z)I({\cal E})-E[G_i(x,z)I({\cal E})]\right|
&\leq& C_2e^{M/(2+\delta)} K(Np_N)^{-1}
\end{eqnarray*}
for some constant $C_2>0$. 
Therefore, we put $M=\log(Np_N\log N)$. 
This implies $P({\cal E}^c)\rightarrow 0$ and under the condition that $\{K\log N\}^{1+\delta/2}/(Np_N)^{\delta/2}\rightarrow 0$, $e^{M/(2+\delta)}=(Np_N)^{1/(2+\delta)}\{\log N\}^{1/(2+\delta)}\leq O(\varepsilon_N^{-1})$.
Consequently, Lemma \ref{bernstein} shows that
\begin{eqnarray*}
&& P\left(\left|\frac{1}{N}\sum_{i=1}^N G_i(x_j^*,z_j^*)I({\cal E})-E[G_i(x_j^*,z_j^*)I({\cal E})]\right|>\varepsilon_N\right)\\
&& \leq 2\exp\left[\frac{2^{-1}\varepsilon_N^2}{C_1 K/(Np_N)+ 3^{-1}C_2\varepsilon_N e^{M/(2+\delta)} K/(Np_N)}\right]\\
&&\leq 2 \exp\left[-C^*C_\varepsilon\log N\right].
\end{eqnarray*} 
Similar to the case (S1), we obtain (\ref{CaseShow}). 

For the case (S3): $\gamma_0(x,z)=0$, we obtain $|\ell_\gamma(Y_i|X_i,Z_i)|<C M^2$ and $|\ell_\sigma(Y_i|X_i,Z_i)|<C M$ for some constants $C>0$.
When putting $M=\sqrt{1/\varepsilon_N}$, $P({\cal E}^c)\rightarrow 0$  and (\ref{CaseShow}) can be shown as the same mannar as case (S1). 
Thus, in each (S1), (S2) or (S3), (\ref{StLinf}) was proven. 
Consequently, we obtain
$$
\|\hat{\gamma}-\gamma_0\|_{L_\infty} \leq O\left(\sqrt{\frac{K\log N}{Np_N}}\right)+O(K^{-m}).+O(p_N^{-\rho}).
$$
Similarly, the rate of convergence of $\|\log\hat{\sigma}-\log\sigma_0\|_{L_\infty}$ can be derived. 
Thus, the proof is completed.
\end{proof}

\begin{proof}[Proof of Theorem \ref{Norm}]
From Lemma \ref{splineap}, we have
\begin{eqnarray*}
\left[
\begin{array}{c}
\hat{\gamma}(x,z)-\gamma_0(x,z)\\
\log \hat{\sigma}(x,z)-\log \sigma_{w0}(x,z)
\end{array}
\right]
=
\left[
\begin{array}{cc}
A(x,z)^\top & 0_{p+K+\xi}^\top \\
0_{p+K+\xi}^\top  &A(x,z)^\top 
\end{array}
\right]
(\hat{\theta}-\theta_0)+O(K^{-\zeta}).
\end{eqnarray*}
Under the condition of Theorem \ref{Norm}, $K^{-\zeta}=o(\sqrt{N/Np_N})$.
Similar to proof of Lemma \ref{RatePara}, Taylor expansion yields that 
\begin{eqnarray*}
\hat{\theta}-\theta_0&=&\Sigma^{-1} \frac{1}{p_N}\frac{\partial \ell_{pen}(\theta_0)}{\partial \theta} (1+o_P(1))\\
&=&
\Sigma^{-1} \frac{1}{p_N}\left[\left\{\frac{\partial \ell(\theta_0)}{\partial \theta}-E\left[\frac{\partial \ell(\theta_0)}{\partial \theta}\right]\right\}+E\left[\frac{\partial \ell(\theta_0)}{\partial \theta}\right]+\Omega_{\gamma,\sigma}\theta_0\right](1+o_P(1)).
\end{eqnarray*}
Here, we note that $A(x,z)=O(\sqrt{K})$. 
From the proof of Lemma \ref{GradientEx} and condition that 
\[(Np_N/K)^{1/2}\{p_N^{-\rho}, K^{-m}\}\rightarrow 0,
\]  we obtain 
\begin{eqnarray*}
&&\sqrt{\frac{Np_N}{K}}\left[
\begin{array}{cc}
A(x,z)^\top & 0_{p+K+\xi}^\top \\
0_{p+K+\xi}^\top  &A(x,z)^\top 
\end{array}
\right]\Sigma^{-1}\left\{p_N^{-1}E\left[\frac{\partial \ell(\theta_0)}{\partial \theta}\right]+p_N^{-1}\Omega_{\gamma,\sigma}\theta_0\right\}\\
&&\leq
O\left(\sqrt{\frac{Np_N}{K}}K^{1/2}\left\{K^{-1/2}p_N^{-\rho}+ K^{-m-1/2}\right\}\right)\\
&&=o(1).
\end{eqnarray*}
Next, Lemma \ref{AS.para} yields that 
\begin{eqnarray*}
&&\sqrt{\frac{Np_N}{K}}
\left[
\begin{array}{cc}
A(x,z)^\top & 0_{p+K+\xi}^\top \\
0_{p+K+\xi}^\top  &A(x,z)^\top 
\end{array}
\right]
\Sigma^{-1}
\frac{1}{p_N}\left\{\frac{\partial \ell(\theta_0)}{\partial \theta}-E\left[\frac{\partial \ell(\theta_0)}{\partial \theta}\right]\right\}\\
&& \stackrel{{\cal D}}{\rightarrow} N(0, \lim_{N\rightarrow\infty}D(x,z)^\top \Sigma^{-1}D(x,z)/K).
\end{eqnarray*}
Consequently, we obtain 
$$
\sqrt{\frac{Np_N}{K}} \left[
\begin{array}{cc}
\hat{\gamma}(x,z)-\gamma_0(x,z)\\
\log \hat{\sigma}(x,z)-\log\sigma_{w0}(x,z)
\end{array}
\right]
 \stackrel{{\cal D}}{\rightarrow} N(0, \lim_{N\rightarrow\infty}D(x,z)^\top \Sigma^{-1}D(x,z)/K).
$$
Define $g(a)=e^a$ for any $a\in\mathbb{R}$. 
We then apply the delta method to $g(\log \hat{\sigma}(x,z))-g(\log\sigma_{w0}(x,z))$, we obtain 
$$
\sqrt{\frac{Np_N}{K}} \left[
\begin{array}{cc}
\hat{\gamma}(x,z)-\gamma_0(x,z)\\
\displaystyle\frac{\hat{\sigma}(x,z)}{\sigma_{w0}(x,z)}-1
\end{array}
\right]
 \stackrel{{\cal D}}{\rightarrow} N(0, \lim_{N\rightarrow\infty}D(x,z)^\top \Sigma^{-1}D(x,z)/K),
$$
which completes the proof.
\end{proof}

\begin{proof}[Proof of Theorem \ref{ParametricPart}]
Let $\theta_{P}=(\beta^\top ,u^\top )^\top \in\mathbb{R}^{2p}$. 
By the Taylor expansion to $\partial \ell_{pen}(\beta,\hat{b},u,\hat{c})/\partial\theta_{P}$ around $(\hat{\beta}^\top ,\hat{u}^\top )^\top =(\beta_0^\top ,u_{w,0}^\top )^\top $,
we have  
\begin{eqnarray*}
0&=&\frac{\partial \ell_{pen}(\hat{\beta},\hat{b},\hat{u},\hat{c})}{\partial \theta_{P}}\\
&=&
\frac{1}{p_N}\frac{\partial \ell_{pen}(\beta_0,\hat{b},u_{w,0},\hat{c})}{\partial  \theta_{P}}
+\frac{1}{p_N}\left(\frac{\partial^2 \ell_{pen}(\beta_0,\hat{b},u_{w,0},\hat{c})}{\partial \theta_P\partial\theta_{P}^\top }\right)
\left[
\begin{array}{c}
\hat{\beta}-\beta_0\\
\hat{u}-u_{w,0}
\end{array}
\right](1+o_P(1)).
\end{eqnarray*}
From Lemma \ref{RatePara}, we have $\|\hat{b}-b_0\|+\|\hat{c}-c_0\|\stackrel{P}{\rightarrow}0$. 
Thefore, Lemma \ref{Hessian} implies that 
$$
\frac{1}{p_N}\frac{\partial^2 \ell_{pen}(\beta_0,\hat{b},u_{w,0},\hat{c})}{\partial \theta_P\partial\theta_{P}^\top }
=\Sigma_{\beta,u}(1+o_P(1)). 
$$
We now denote
\begin{eqnarray*}
\frac{1}{p_N}\frac{\partial \ell_{pen}(\beta_0,\hat{b},u_{w,0},\hat{c})}{\partial  \theta_{P}}
=
\frac{1}{p_N}\frac{\partial \ell_{pen}(\beta_0,b_0,u_{w,0},c_0)}{\partial  \theta_{P}}+R_N(\hat{b},\hat{c}),
\end{eqnarray*}
where 
\begin{eqnarray*}
R_N(\hat{b},\hat{c})
=\frac{1}{p_N}\left(\frac{\partial \ell_{pen}(\beta_0,\hat{b},u_{w,0},\hat{c})}{\partial  \theta_{P}}-\frac{\partial \ell_{pen}(\beta_0,b_0,u_{w,0},c_0)}{\partial  \theta_{P}}\right).
\end{eqnarray*}
Then, we have 
\begin{eqnarray*}
\left[
\begin{array}{c}
\hat{\beta}-\beta_0\\
\hat{u}-u_{w,0}
\end{array}
\right]
=-\Sigma_{\beta,u}^{-1}\left(\frac{1}{p_N}\frac{\partial \ell_{pen}(\beta_0,b_0,u_{w,0},c_0)}{\partial  \theta_{P}}\right)-\Sigma_{\beta,u}^{-1}R_N(\hat{b},\hat{c}).
\end{eqnarray*}
Similar to the proof of Lemmas \ref{GradientEx}, \ref{AS.para} and Theorem \ref{Norm}, 
we obtain $p_N^{-1}E[\partial \ell_{pen}(\beta_0,b_0,u_{w,0},c_0)/\partial  \theta_{P}]= O(p_N^{-\rho})$ and 
\begin{eqnarray*}
\sqrt{Np_N}\Sigma_{\beta,u}^{-1}\left(\frac{1}{p_N}\frac{\partial \ell_{pen}(\beta_0,b_0,u_{w,0},c_0)}{\partial  \theta_{P}}\right) \stackrel{D}{\rightarrow} N(0,\Sigma_{\beta,u}).
\end{eqnarray*}
Thus, if $\sqrt{Np_N}R_N(\hat{b}, \hat{c})=o_P(1)$ and $E[R_N(\hat{b},\hat{c})]=o(p_N^{-\rho})$, the theorem is said to be proven.
Since $\|\hat{b}-b_0\|+\|\hat{c}-c_0\|\stackrel{P}{\rightarrow}0$ from Lemma \ref{ConvPara} and $R_N(b,c)$ is continuous with respect to $(b,c)$, the standard deviation of $R_N(\hat{b},\hat{c})$ becomes $o(1/\sqrt{Np_N})$. 
The remaining part of the proof is only to show $E[R_N(\hat{b},\hat{c})]=o(p_N^{-\rho})$.

In following, we only consider the case (S1) and (S2). 
The proof for (S3) is similar and it is omited for the sake of space.
Let $\gamma(x,z|b)=x^\top \beta_0+ B(z)^\top b$ for any $b\in\mathbb{R}^{d(K+\xi)}$ and $\sigma(x,z|c)=\exp[x^\top u_{w,0}+B(z)^\top c]$ for any $c\in\mathbb{R}^{d(K+\xi)}$. 
Note that $\gamma_0(x,z)=\gamma(x,z|b_0)$ and $\sigma_{w0}(x,z)=\sigma(x,z|c_0)$.
We further let 
$$
\ell_\gamma(y|x,z,\hat{b},\hat{c})
=(\gamma(x,z|\hat{b})^{-1}+1)\frac{y/\sigma(x,z|\hat{c})}{1+y\gamma(x,z|\hat{b})/\sigma(x,z|\hat{c})}-\gamma(x,z|\hat{b})^{-2}\log\left(1+\frac{y\gamma(x,z|\hat{b})}{\sigma(x,z|\hat{c})}\right)
$$
and 
$$
\ell_\sigma(y|x,z,\hat{b},\hat{c})
=1-(\gamma(x,z|\hat{b})^{-1}+1)\frac{y \gamma(x,z|\hat{b})/\sigma(x,z|\hat{c})}{1+y \gamma(x,z|\hat{b})/\sigma(x,z|\hat{c}) }.
$$
Then, we have 
$$
R_N(\hat{b},\hat{c})
=
\frac{1}{Np_N}\sum_{i=1}^N \left[
\begin{array}{c}
\{\ell_\gamma(Y_i|X_i,Z_i,\hat{b},\hat{c})-\ell_\gamma(Y_i|X_i,Z_i,b_0,c_0)\}X_i\\
\{\ell_\sigma(Y_i|X_i,Z_i,\hat{b},\hat{c})-\ell_\sigma(Y_i|X_i,Z_i,b_0,c_0)\}X_i\\
\end{array}
\right]I(Y_i>0).
$$
By the definition of $R_N$, we need to prove
\begin{eqnarray}
p_N^{-1} E[\{\ell_\gamma(Y_i|X_i,Z_i,\hat{b},\hat{c})-\ell_\gamma(Y_i|X_i,Z_i,b_0,c_0)\}X_iI(Y_i>0)]=o(\rho_N^{-\rho}) \label{LastOrder1}
\end{eqnarray}
and 
\begin{eqnarray}
p_N^{-1} E[\{\ell_\sigma(Y_i|X_i,Z_i,\hat{b},\hat{c})-\ell_\sigma(Y_i|X_i,Z_i,b_0,c_0)\}X_iI(Y_i>0)]=o(\rho_N^{-\rho}). \label{LastOrder2}
\end{eqnarray}
We now focus on deriving (\ref{LastOrder1}) since the proof of (\ref{LastOrder2}) is similar.
The Taylor expansion implies that 
\begin{eqnarray*}
\ell_\gamma(Y_i|X_i,Z_i,\hat{b},\hat{c})-\ell_\gamma(Y_i|X_i,Z_i,b_0,c_0)
&=& \frac{\partial \ell_\gamma(Y_i|X_i,Z_i,b_0,c_0) }{\partial b}(\hat{b}-b_0)(1+o_P(1))\\
&&+ \frac{\partial \ell_\gamma(Y_i|X_i,Z_i,b_0,c_0) }{\partial c}(\hat{c}-c_0)(1+o_P(1)).
\end{eqnarray*}
Here, we obtain 
\[
\frac{\partial \ell_\gamma(Y_i|X_i,Z_i,b_0,c_0) }{\partial b}
=\ell_{\gamma\gamma}(Y_i|X_i,Z_i,b_0,c_0) B(Z_i),
\]
where 
\begin{eqnarray*}
\ell_{\gamma\gamma}(y|x,z,b,c) 
&=&
-2\gamma(x,z|b)^{-2}\frac{y/\sigma(x,z|c)}{1+y\gamma(x,z|b)/\sigma(x,z|c)}\\
&&-(\gamma(x,z|b)^{-1}+1)\frac{-y^2/\sigma^2(x,z|c)}{\{1+y\gamma(x,z|b)/\sigma(x,z|c)\}^2}\\
&&\quad +2\gamma(x,z|b)^{-3}\log\left(1+\frac{y\gamma(x,z|b)}{\sigma(x,z|c)}\right).
\end{eqnarray*}
We note that $E[\ell_{\gamma\gamma}(Y_i|x,z,b_0,c_0)\mid Y_i>0]=2(2\gamma_0(x,z)+1)^{-1}(\gamma_0(x,z)+1)^{-1}(1+o(1))$, which is related to Fisher information matrix of $-\log h(Y_i|\gamma_0(x,z),\sigma_0(x,z))$ (see, the proof of Lemma \ref{GradientSt}).
Similarly, we have 
\[
\frac{\partial \ell_\gamma(Y_i|X_i,Z_i,b_0,c_0) }{\partial c}
=\ell_{\gamma\sigma}(Y_i|X_i,Z_i,b_0,c_0) B(Z_i),
\]
where 
\begin{eqnarray*}
&&\ell_{\gamma\sigma}(Y_i|X_i,Z_i,b,c)\\
&&=\gamma(x,z|b)^{-2}\frac{y\gamma(x,z|b)/\sigma(x,z|c)}{1+y\gamma(x,z|b)/\sigma(x,z|c)}\\
&&
\quad -(\gamma(x,z|b)^{-1}+1)\left\{\frac{y/\sigma_0(x,z|c)}{1+y\gamma(x,z|b)/\sigma(x,z|c)^2}-\frac{y^2\gamma(x,z|b)/\sigma(x,z|c)^2}{\{1+y\gamma(x,z|b)/\sigma(x,z|c)^2\}^2}\right\}
\end{eqnarray*}
and $E[\ell_{\gamma\sigma}(Y_i|x,z,b_0,c_0)\mid Y_i>0]=(2\gamma_0(x,z)+1)^{-1}(1+o(1))$.
We write 
$$
\Sigma^{-1}
=
\left[
\begin{array}{cccc}
\bar{\Sigma}_{\beta\beta}&\bar{\Sigma}_{\beta b}&\bar{\Sigma}_{\beta u}&\bar{\Sigma}_{\beta c}\\
\bar{\Sigma}_{b\beta}&\bar{\Sigma}_{b b}&\bar{\Sigma}_{b u}&\bar{\Sigma}_{b c}\\
\bar{\Sigma}_{u\beta}&\bar{\Sigma}_{u b}&\bar{\Sigma}_{u u}&\bar{\Sigma}_{u c}\\
\bar{\Sigma}_{c\beta}&\bar{\Sigma}_{c b}&\bar{\Sigma}_{c u}&\bar{\Sigma}_{c c}
\end{array}
\right],
$$
where the size of each block of $\Sigma^{-1}$ are similar to the lengrh of vector appeared in the indeces $\beta,u\in\mathbb{R}^p, b,c\in\mathbb{R}^{d(K+\xi)}$. 
Since 
\begin{eqnarray*}
\left[
\begin{array}{cccc}
\hat{\beta}-\beta_0\\
\hat{b}-b_0\\
\hat{u}-u_0\\
\hat{c}-c_0
\end{array}
\right]
=
\Sigma^{-1}
\left[
\begin{array}{c}
\frac{\partial}{\partial\beta}\ell_{pen}(\theta_0)\\
\frac{\partial}{\partial b}\ell_{pen}(\theta_0)\\
\frac{\partial}{\partial u}\ell_{pen}(\theta_0)\\
\frac{\partial}{\partial c}\ell_{pen}(\theta_0)
\end{array}
\right](1+o_P(1)),
\end{eqnarray*}
we obtain 
\begin{eqnarray*}
&&\hat{b}-b_0\\
&&=\left[ \bar{\Sigma}_{b\beta}\frac{\partial}{\partial\beta}\ell_{pen}(\theta_0)
+\bar{\Sigma}_{b b}\frac{\partial}{\partial b}\ell_{pen}(\theta_0)+ \bar{\Sigma}_{b u}\frac{\partial}{\partial u}\ell_{pen}(\theta_0)
+\bar{\Sigma}_{b c}\frac{\partial}{\partial c}\ell_{pen}(\theta_0)\right](1+o_P(1))\\
&&=
\frac{1}{N}\sum_{i=1}^N \left[\ell_\gamma(Y_i|X_i,Z_i)\{\bar{\Sigma}_{b\beta}X_i+ \bar{\Sigma}_{b b}B(Z_i) \}+\ell_\sigma(Y_i|X_i,Z_i)\{\bar{\Sigma}_{bu}X_i+ \bar{\Sigma}_{b c}B(Z_i) \}\right.\\
&&\left. \quad\quad\quad+K^{2m}\bar{\Sigma}_{b b}\Omega(\lambda) b_0+ K^{2m}\bar{\Sigma}_{bc}\Omega(\nu) c_0 \right]I(Y_i>0)(1+o_P(1))\\
&&\equiv\frac{1}{N}\sum_{i=1}^N L_b(Y_i|X_i,Z_i)I(Y_i>0) (1+o_P(1))
\end{eqnarray*}
and 
\begin{eqnarray*}
&&\hat{c}-c_0\\
&&=
\frac{1}{N}\sum_{i=1}^N \left[\ell_\gamma(Y_i|X_i,Z_i)\{\bar{\Sigma}_{c\beta}X_i+ \bar{\Sigma}_{c b}B(Z_i) \}+\ell_\sigma(Y_i|X_i,Z_i)\{\bar{\Sigma}_{cu}X_i+ \bar{\Sigma}_{c c}B(Z_i) \}\right.\\
&&\left. \quad\quad\quad+ K^{2m}\bar{\Sigma}_{c b}\Omega(\lambda) b_0+ K^{2m}\bar{\Sigma}_{cc}\Omega(\nu) c_0 \right]I(Y_i>0)(1+o_P(1))\\
&&\equiv\frac{1}{N}\sum_{i=1}^N L_c(Y_i|X_i,Z_i)I(Y_i>0) (1+o_P(1))
\end{eqnarray*}
where $\ell_\gamma(Y_i|X_i,Z_i)=\ell_\gamma(Y_i|X_i,Z_i,b_0,c_0)$ and $\ell_\sigma(Y_i|X_i,Z_i)=\ell_\sigma(Y_i|X_i,Z_i,b_0,c_0)$. 
Therefore, (\ref{LastOrder1}) can be written by
\begin{eqnarray*}
&&p_N^{-1}E\left[\{\ell_\gamma(Y_i|X_i,Z_i,\hat{b},\hat{c})-\ell_\gamma(Y_i|X_i,Z_i,b_0,c_0)\}X_i I(Y_i>0)\right]\\
&&=\frac{1}{Np_N}\sum_{j=1}^N E[\ell_{\gamma\gamma}(Y_i|X_i,Z_i,b_0,c_0)B(Z_i)^\top  L_b(Y_j|X_j,Z_j) X_iI(Y_i>0)]\\
&&+\frac{1}{Np_N}\sum_{j=1}^N E[\ell_{\gamma\sigma}(Y_i|X_i,Z_i,b_0,c_0)B(Z_i)^\top  L_c(Y_j|X_j,Z_j) X_i I(Y_i>0)]
\end{eqnarray*}
By the similar arguments as Lemma \ref{GradientEx}, we have $E[L_b(Y_j|X_j,Z_j)\mid Y_i>0]=O(K^{-m})+O(p_N^{-\rho})$ and $E[L_c(Y_j|X_j,Z_j)\mid Y_i>0]=O(K^{-m})+O(p_N^{-\rho})$. 
Furthermore, from Cauchy–Schwarz inequality, we obtain $E[\ell_{\gamma\gamma}(Y_i|X_i,Z_i,b_0,c_0)L_b(Y_j|X_j,Z_j)\mid Y_i>0]\leq O(K^{-m})+O(p_N^{-\rho})$ and $E[\ell_{\gamma\sigma}(Y_i|X_i,Z_i,b_0,c_0)L_c(Y_j|X_j,Z_j)\mid Y_i>0]\leq O(K^{-m})+O(p_N^{-\rho})$. 
This implies that 

\begin{eqnarray*}
&&\frac{1}{Np_N}\sum_{j=1}^N E[\ell_{\gamma\gamma}(Y_i|X_i,Z_i,b_0,c_0)B(Z_i)^\top  L_b(Y_j|X_j,Z_j) X_iI(Y_i>0)]\\
&&=
\frac{1}{Np_N} E[P(Y_i\mid X,Z)\ell_{\gamma\gamma}(Y_i|X_i,Z_i,b^*,c^*)B(Z_i)^\top  L_b(Y_i|X_i,Z_i) X_i \mid Y_i>0]\\
&&\quad + \frac{1}{Np_N}\sum_{j=1,j\not=i}^N E[P(Y_i\mid X,Z)P(Y_j\mid X,Z) \ell_{\gamma\gamma}(Y_i|X_i,Z_i,b^*,c^*)\\
&&\quad\quad\quad\times B(Z_i)^\top  L_b(Y_j|X_j,Z_j) X_i\mid Y_i>0, Y_j>0]\\
&&\leq O(p_N K^{-m} + p_N^{1-\rho}).
\end{eqnarray*}
Similarly, we have 
\[
\frac{1}{Np_N}\sum_{i,j=1}^N E[\ell_{\gamma\sigma}(Y_i|X_i,Z_i,b_0,c_0)B(Z_i)^\top  L_c(Y_j|X_j,Z_j) X_i I(Y_i>0)]
\leq O(p_N K^{-m} + p_N^{1-\rho}).
\]
Consequently, we can prove $E[R_N(\hat{b},\hat{c})]=O(p_N K^{-m} + p_N^{1-\rho})=o(p_N^{-\rho})$, which completes the proof.
\end{proof}



\end{appendices}

\subsection*{Declarations}

\noindent{\bf Ethical Approval} There are no human and animal subjects in this manuscript. 

\vspace{3mm}

\noindent{\bf Conflict of Interest} The author declares that he has no confict of interest.

\vspace{3mm}

\noindent{\bf Funding} This research was financially supported by JSPS KAKENHI (Grant Nos. 22K11935 and 23K28043).

\vspace{3mm}

\noindent{\bf Acknowledgments}
We would like to thank the Editor, the Associate Editor, and the anonymous reviewers for their helpful comments and suggestions, which improved this report of our work.  
We also thank FASTEK JAPAN (www.fastekjapan.com) for English language editing.

\def\bibname{Reference}



\end{document}